\title{Cylindrical homomorphisms and Lawson homology}                 
\theoremstyle{plain} 
\newtheorem{theorem}{Theorem}[section]
\newtheorem{lemma}[theorem]{Lemma} 
\newtheorem{definition}[theorem]{Definition}
\newtheorem{conjecture}[theorem]{Conjecture}
\newtheorem{proposition}[theorem]{Proposition}
\newtheorem{corollary}[theorem]{Corollary}
\newtheorem{remark}[theorem]{Remark}
\newcommand{\tensor}{\otimes}
\author{Mircea Voineagu}
\address {Department of Mathematics, University of Southern California,
Los Angeles, CA  90089}
\email{voineagu@usc.edu}
\subjclass[2000]{19E20, 19E15, 14F43}
\keywords{}
\begin{document}



\begin{abstract}
We use the cylindrical homomorphism and a geometric construction introduced by J. Lewis to study the Lawson homology groups of certain hypersurfaces $X\subset \mathbb{P}^{n+1}$ of degree $d\leq n+1$. As an application, we compute the rational semi-topological K-theory of a generic cubic of dimension 5, 6 and 8 and, using the Bloch-Kato conjecture, we prove Suslin's conjecture for these varieties. Using the generic cubic sevenfolds, we show that  there are smooth projective varieties  with the lowest non-trivial step in their s-filtration infinitely generated and undetected by the Abel-Jacobi map.  

\end{abstract}
\maketitle
\tableofcontents
 Lawson homology of a projective variety $X$ is defined as the homotopy groups of algebraic cycle spaces of a fixed dimension on $X$. We write $$L _rH _n(X):=\pi _{n-2r} (Z _r(X))$$ and, intuitively, think of an element in this group as a ``family of r-cycles parametrized by a (n-2r)-sphere" \cite{FM2}. These groups, replete with information about the projective variety $X$, are a combination of algebraic and topological information. For example, the algebraic equivalence class of an algebraic  r-cycle can be expressed as a connected component of the topological space of algebraic r-cycles $Z _r(X)$ \cite{F}. In the same flavor, Dold-Thom theorem shows that ``families of 0-cycles parametrized by a n-sphere" are the same as the topological classes in the singular homology of $X^{an}$. The s-map is a map that ``measures" how close Lawson homology groups are to algebraic geometry or topology. We have the following sequence of maps
 $$A _r(X)=L _rH _{2r}(X)\stackrel{s}{\rightarrow} L _{r-1}H _{2r}(X)\stackrel{s}{\rightarrow}..\stackrel{s}{\rightarrow} L _1H _{2r}(X)\stackrel{s}{\rightarrow} H _{2r}(X^{an}).$$
 The composition of the above s-maps gives the usual cycle map between the Chow group of algebraic cycles modulo algebraic equivalence, denoted $A _r(X)$, and the singular homology of the complex points of $X$, written $H _{2r}(X^{an})$ \cite{FM2}. Encoded in the construction of the s-map is the celebrated suspension theorem for algebraic cycles proved by Lawson \cite{L}, the starting point of  Lawson homology. 
 
 Lawson homology with finite coefficients of a smooth projective variety $X$ is proved to be isomorphic, via a Poincare type duality \cite{FL1}, with the motivic cohomology with finite coefficients of $X$ \cite{FW2}. Through this isomorphism, we can study the torsion of the Lawson homology groups by motivic cohomology tools (see Proposition \ref{propo}). In particular, the Beilinson-Lichtenbaum conjecture may be used to identify, for certain indices, Lawson homology groups with finite coefficients with singular homology groups with finite coefficients. As shown in \cite{SV1}, the Beilinson-Lichtenbaum conjecture is equivalent to the Bloch-Kato conjecture, which has been proven by V. Voevodsky and M. Rost (see \cite{Voevo},  \cite{Weib}, \cite{Rost2}). 
 
  The torsion free subgroup of a Lawson homology group may be studied using methods from Hodge theory (see \cite{FHW}, \cite{Voin}, \cite{Hu})). Some of these methods, mostly those designed to study the niveau filtration of singular cohomology and the Generalized Hodge conjecture, may be adapted to work on Lawson homology groups. The generalized cycle maps between the Lawson homology groups of $X$ and the singular homology of $X^{an}$ factor through steps in the niveau filtration of the latter and it is conjectured that these steps give the entire image of these generalized cycle maps \cite{FM}. With finite coefficients, this conjecture follows from the Bloch-Kato conjecture. With integer coefficients, this conjecture follows from the very far reaching Suslin's conjecture (see Conjecture (\ref{sc})). Although Suslin's conjecture and the Generalized Hodge conjecture are not known to be related in some direct way (i.e. neither is known to imply the other), all the varieties proved to satisfy Suslin's conjecture (see \cite{FHW}, \cite{Voin}) were previously known to satisfy the Generalized Hodge conjecture. On the other hand, there are varieties that are known to fulfill the Generalized Hodge conjecture and for which Suslin's conjecture is still not known. For example, abelian varieties as in \cite{Abd} or generic cubics sevenfolds or elevenfolds as in Remark \ref{eiel}. 
  
  The main difficulty that appears in these particular examples lives in understanding the kernels of s-maps. The kernels of  s-maps, in particular the kernels of the generalized cycle maps, are mostly mysterious and expected to be in general very huge (i.e. infinitely generated rational vector spaces). However, Suslin's conjecture predicts that, in the range of indices as in the Beilinson-Lichtenbaum conjecture, these kernels are zero for any projective smooth variety.
  
  In some cases (for example in those of certain projective varieties with ``few" algebraic cycles like rationally connected varieties) these kernels can be understood, at least in some range of indices \cite{Voin}, \cite{FHW}. In this paper, we use a geometrical construction, introduced by J. Lewis \cite{Le1}, to study the Lawson homology groups of a generic hypersurface that has ``enough" $k$-planes, $k\geq 1$. Except in the middle dimension, the Lawson homology of a hypersurface always surjects, at least rationally, onto its singular homology, as an application of the simple structure of its singular homology (see Proposition \ref{p1}). The study of the kernels of s-maps of a smooth projective hypersurface  is highly non-trivial, even for indices outside of the middle dimension (see Corollary \ref{c46}). Applying the method of \cite{Le} (also \cite{BM}, \cite{Le1}) on Lawson homology groups allows us not only to obtain the results of \cite{Le}, but also computations of certain Lawson homology groups. As a consequence, we are also able to compute the semi-topological K-theory for certain generic cubic hypersurfaces. Semi-topological K-theory of a projective smooth variety $X$, denoted $K^{sst} _n(X)$, is a K-theory that lives between the algebraic K-theory of X and the topological K-theory of $X^{an}$ , i.e. 
  $$K^{alg} _n(X)\rightarrow K^{sst} _n(X)\rightarrow ku^{-n}(X^{an})$$
sharing important properties with these two K-theories \cite{FW2}.  
  The main tool to compute it is an ``Atiyah-Hirzebruch" type of spectral sequence from the Lawson homology groups of the smooth projective variety $X$ \cite{FHW}.           
  
  The motivation of this paper was the study of Lawson homology of rationally connected varieties of dimension greater than four. In \cite{Voin}, we computed the Lawson homology of rationally connected varieties of dimension three and the rational Lawson homology of dimension four rationally connected varieties; in particular, we were able to check Suslin's conjecture and Friedlander-Mazur's conjecture in these cases. Some questions appeared: Could we check Suslin's conjecture for rationally connected varieties of higher dimension? Are the s-maps always monomorphisms for a rationally connnected variety of any dimension? The results of  \cite{Voin} anticipate that the Lawson homology of rationally connected hypersurfaces may provide answers to these questions, in particular the Lawson homology of a generic cubic sevenfold and eightfold. In \cite{Voin}, the cases of a cubic fivefold and sixfold were discussed by means of the decomposition of diagonal method and of a result of Esnault, Viehweg and Levine \cite{EVL}. Using the natural and transparent geometric method of J. Lewis \cite{Le}, we obtain more general results (the case of a cubic eightfold, see Corollary \ref{g8}). 

  The paper is divided in four sections and an Appendix. In the first section, we introduce our notations and recall some of the results needed later in the paper. 
  
  In the second section, we recall the geometric construction from J. Lewis \cite{Le} and use it in the context of Lawson homology groups. We will also introduce here the cylindrical homomorphisms, which are the main objects of study in this paper. The results proved in this section give us the main tools that will be used in the fourth section. Part of the proofs needed in this section is given in the Appendix. 
  
  In the third section, we extend a ``weak Lefschetz" type of theorem, proved for Chow groups in \cite{Le}, to Lawson homology groups and discuss some applications of the weak Lefschetz theorem on homology to the s-maps on the Lawson homology of a hypersurface of any dimension and degree.  
  
  The fourth section is devoted to applications of the tools developed in the second and third sections. We compute, in a certain range of indices, the Lawson homology of some rationally connected generic hypersurfaces. As an application we compute the rational Lawson homology and semi-topological K-theory of a generic cubic eightfold (Corollary \ref{g8}), obtaining, in particular, the validity of Suslin's conjecture in this particular case. In the end of this section, based on the results obtained in the case of a generic cubic sevenfold, we remark that there are examples of varieties with the lowest nontrivial step in the s-filtration of a Griffiths group (see (\ref{s-filtration})) an infinitely generated $\mathbb{Q}$-vector space and undetected by the Abel-Jacobi map (see Corollary \ref{aju}).

  This paper uses in an essential way  J. Lewis's geometric construction from \cite{Le1} and \cite{Le} in the context of Lawson homology. We thank J. Lewis for making this interesting construction available. 
  
  We thank Eric Friedlander for constant encouragement and for reading various versions of this paper and making useful comments. We thank Pedro dos Santos for a detailed reading of a previous version of the paper and for his suggestions who clearly improved our paper. We thank Jeremiah Heller and Jian He for many discussions related to the results of this paper. 
        
\section{Notations and Recollection}
  In this section, we will introduce the notations used in the paper  and briefly state some of the results needed later on.

All algebraic varieties in this paper are smooth and irreducible over complex numbers . By $L _pH _q(X)$, $CH _r(X)$, $A _r(X)$, $K^{sst}(X)$, $H^{BM} _q$ and $H _q(X^{an})$ we define Lawson homology, Chow group of algebraic cycles modulo rational equivalence, Chow group of algebraic cycles modulo algebraic equivalence, semi-topological K-theory, Borel-Moore cohomology  and singular homology with integer coefficients.  By $L _pH _q(X) _\mathbb{Q}$ we mean Lawson homology with rational coefficients (similar notation for rational semi-topological K-theory  and rational singular homology). By an isomorphism written like $\stackrel{\mathbb{Q}}{\simeq}$ we mean an isomorphism of rational vector spaces. By a monomorphism written $\stackrel{\mathbb{Q}}{\hookrightarrow}$ we mean a rational monomorphism. We will write $\alpha.\beta$ for the intersection product of two algebraic cycles $\alpha$, $\beta$ in $X$. We will write 1 for the identity map, when there is no confusion. We write $[x]$ for the integer part of the real number $x$. 

We call a hypersurface of dimension n generic if it belongs to a point in a non-empty Zariski open subset of the variety of hypersurfaces of degree $d$ in the projective space $\mathbb{P}^{n+1}$ .  

For a smooth projective variety $X$, we define $Z _r(X)=(\mathscr{C} _r(X))^+$, the naive group completions  of  $\mathscr{C} _r(X)=\amalg _d\mathscr{C} _{r,d}(X)$. Here $\mathscr{C} _{r,d}(X)$ is the Chow variety of algebraic cycles of degree $d$ and dimension $r$. The topology on the group $Z _r(X)$ is the quotient topology induced by the complex topology of the projective varieties $\mathscr{C} _{r,d}(X)$.   We call  $Z _r(X)$ the topological space of $r-$dimensional algebraic cycles. The empty cycle $0\in\mathscr{C} _r(X)$ is the natural base point of $Z _r(X)$. We define 
   $$L _qH _n(X)=\pi _{n-2q}(Z _q(X))$$
 for any $0\leq q\leq d$ and $n\geq 2q$.  For a quasi-projective variety $U$, with a projective closure $X$, we let (see \cite{LF})
 $$Z _r(U):=Z _r(X)/Z _r(X\setminus U).$$
  This is, up to isomorphism,  a well defined object in the category of topological abelian groups that admit a structure of CW-complex with inverted homotopy equivalences  (\cite{LF}, \cite{FG}, \cite{P}). We call this category $\mathcal{H}^{-1}AbTop$. 
  
  Moreover, for a closed embedding of projective varieties $Y\subset X$, the exact sequence of topological groups
  $$0\rightarrow Z _r(Y)\rightarrow Z _r(X)\rightarrow Z _r(X\setminus Y)\rightarrow 0$$
  gives a long exact localization sequence of homotopy groups (\cite{LF}, \cite{FG}) 
  \begin{align*}
  \label{local}
  ..\rightarrow \pi _*Z _r(Y)\rightarrow \pi _*Z _r(X)&\rightarrow \pi _*Z _r(X\setminus{Y})\rightarrow \pi _{*-1}Z _r(Y)\rightarrow ..\\
    &..\rightarrow \pi _0Z _r(Y)\rightarrow \pi _0Z _r(X)\rightarrow \pi _0Z _r(X\setminus Y)\rightarrow 0.
 \end{align*}
 For $q<0$, we define the following topological cycle spaces (with the quotient topology):
$$Z _q(X)=Z _0(X\times \mathbb{A}^{-q}):=Z _0(X\times \mathbb{P}^{-q})/Z _0(X\times \mathbb{P}^{-q-1}).$$
The homotopy groups of these cycle spaces give the negative Lawson homology, i.e.
$$L _qH _n(X):=\pi _{n-2q}(Z _{q}(X))$$ 
for any $q<0$. The following equalities (\cite{FHW}) show that these groups are all isomorphic with the Borel-Moore homology of $X$. For any $q<0$
\begin{equation}
\label{qn}
L _qH _n(X)=\pi _{n-2q}(Z _0(X\times \mathbb{A}^{-q}))=H^{BM} _{n-2q}(X\times \mathbb{A}^{-q})\simeq H^{BM} _n(X^{an})=L _0H _n(X).
\end{equation}
The space of t-cocycles for a smooth projective variety $X$ is defined to be the following naive completion
$$Z^t(X)=\left( \mathcal{M}or(X,C _0(\mathbb{A}^t))^{an}/\mathcal{M}or(X,C _0(\mathbb{A}^{t-1}))^{an}\right)^+$$ 
where by $\mathcal{M}or(X,C _0(Y))^{an}$ we mean the abelian monoid of morphisms between $X$ and the Chow monoid $C _0(Y)$ provided with the compact-open topology (\cite{FL}). 

The following ``Poincare duality" type of theorem was proved in \cite{FL1}:
\begin{theorem}(\cite{FL1})
\label{dual}
There is a homotopy equivalence 
$$\mathcal{D}: Z^t(X)\rightarrow Z _d(X\times \mathbb{A}^t)\simeq Z _{d-t}(X)$$
for any smooth projective variety $X$ of dimension $d$ and for any $t\geq 0$.
\end{theorem}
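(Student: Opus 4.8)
The plan is to make $\mathcal{D}$ explicit and then split the assertion into two independent claims: that $\mathcal{D}$ is a homotopy equivalence, and that $Z_d(X\times\mathbb{A}^t)\simeq Z_{d-t}(X)$. First I would construct $\mathcal{D}$ directly from the definition of cocycles. An effective $t$-cocycle is a morphism $f\colon X\to C_0(\mathbb{A}^t)$, i.e.\ an effective $0$-cycle on $\mathbb{A}^t$ depending algebraically on $x\in X$; equivalently it is an effective cycle $\Gamma_f$ on $X\times\mathbb{A}^t$ whose projection to $X$ is finite, so $\Gamma_f$ is equidimensional of relative dimension $0$ over $X$. Since then $\dim\Gamma_f=\dim X=d$, the rule $f\mapsto\Gamma_f$, after group completion, defines $\mathcal{D}\colon Z^t(X)\to Z_d(X\times\mathbb{A}^t)$; moreover the quotient by $\mathcal{M}or(X,C_0(\mathbb{A}^{t-1}))$ built into $Z^t(X)$ matches the localization quotient presenting $Z_d(X\times\mathbb{A}^t)$ as $Z_d(X\times\mathbb{P}^t)/Z_d(X\times\mathbb{P}^{t-1})$. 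Thus the image of $\mathcal{D}$ is exactly the subgroup of $d$-cycles that are equidimensional over $X$, and the first half of the theorem becomes the statement that the inclusion of equidimensional cycles into all $d$-cycles on $X\times\mathbb{A}^t$ is a homotopy equivalence.

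For the second equivalence $Z_d(X\times\mathbb{A}^t)\simeq Z_{d-t}(X)$ I would invoke the projective bundle formula for cycle spaces, $Z_d(X\times\mathbb{P}^k)\simeq\prod_{i=0}^k Z_{d-i}(X)$, which itself follows by iterating Lawson's suspension theorem along the cell filtration of $\mathbb{P}^k$. Feeding the pair $(X\times\mathbb{P}^t,\,X\times\mathbb{P}^{t-1})$ with open complement $X\times\mathbb{A}^t$ into the long exact localization sequence recalled above, the inclusion $Z_d(X\times\mathbb{P}^{t-1})\hookrightarrow Z_d(X\times\mathbb{P}^t)$ becomes, under the bundle formula, the inclusion of the first $t$ factors. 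Hence the homotopy cofiber, namely $Z_d(X\times\mathbb{A}^t)$, is equivalent to the remaining factor $Z_{d-t}(X)$. This part is formal once the suspension theorem is in hand.

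The hard part will be showing that $\mathcal{D}$ is a homotopy equivalence, which amounts to a \emph{moving lemma}. I would aim to prove that any family of $d$-cycles on $X\times\mathbb{A}^t$ parametrized by a sphere (or disk) can be deformed, through a homotopy of such families, into a family equidimensional over $X$, and that two equidimensional families which are homotopic as cycles are already homotopic through equidimensional families; together these give surjectivity and injectivity of $\mathcal{D}$ on all homotopy groups. The geometric mechanism is to push a cycle off the ``bad locus'' where its projection to $X$ fails to be finite: after embedding $X\hookrightarrow\mathbb{P}^N$, one uses generic linear projections and a Chow-type moving argument, and here the smoothness of $X$ is essential, both to bound the dimensions of the excess loci and to make the chosen moves depend algebraically on the cycle so that they can be performed coherently in families. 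Verifying this coherence and the two homotopy statements is the genuinely technical core, and it is exactly the content of the duality theorem of \cite{FL1}.

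Finally, because every space here lives in $\mathcal{H}^{-1}AbTop$, where homotopy equivalences have been inverted and the objects carry $CW$ structure, a weak homotopy equivalence is automatically a homotopy equivalence. So once the moving lemma yields isomorphisms on all homotopy groups, the conclusion that $\mathcal{D}$ is a homotopy equivalence, and hence $Z^t(X)\simeq Z_{d-t}(X)$, follows immediately.
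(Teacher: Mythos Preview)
The paper does not prove this theorem: it is quoted verbatim from \cite{FL1} as background in the ``Notations and Recollection'' section, with no argument supplied. So there is no proof in the paper to compare your proposal against.

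That said, your outline is a faithful sketch of how the result is actually established in \cite{FL1}: the map $\mathcal{D}$ is the graphing construction sending a cocycle to its incidence cycle, the equivalence $Z_d(X\times\mathbb{A}^t)\simeq Z_{d-t}(X)$ is Lawson suspension (iterated via the projective bundle/localization argument you describe), and the substantive content is precisely the moving lemma showing that equidimensional cycles include into all cycles as a weak equivalence, which requires smoothness of $X$. Your identification of the ``hard part'' and its mechanism is correct. For the purposes of this paper, however, the right thing to do is simply cite \cite{FL1} and move on, as the author does.
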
 
Theorem \ref{dual} says that the cycle spaces $Z _q(X)$, with $q\leq 0$, are, up to homotopy, the t-cocycle spaces $Z^t(X)$, with $t\geq dim(X)$.  In Proposition \ref{sneg} we give a simple application of this remark. 

For  a proper map of quasi-projective varieties $f:X\rightarrow Y$ and any $r\in \mathbb{Z}$, we have the push-forward map $f _*:Z _r(X)\rightarrow Z _r(Y)$ (\cite{FG}, \cite{HULi}). For any locally complete intersection map $f:X\rightarrow Y$ of codimension $d$ ($d=dim(Y)- dim(X)$), we have a well-defined Gysin map (in $\mathcal{H}^{-1}AbTop$)
$$f^*: Z _r(Y)\rightarrow Z _{r-d}(X)$$
for any $r\in \mathbb{Z}$ (\cite{FG}). The Gysin map for a proper map of quasi-projective varieties $f:X\rightarrow Y$, with $Y$ a smooth quasi-projective variety, is defined to be the composition 
$$Z _s(Y)\stackrel{pr _2^*}{\rightarrow}Z _{s+dim(X)}(X\times Y)\stackrel{\delta^*}{\rightarrow} Z _{s+dim(X)-dim(Y)}(X)$$
where $pr _2^*$ is the flat pull back of the projection map on $Y$ and $\delta^*$ is the Gysin map of the regular embedding of the graph of $f$. In the case of a regular embedding $i _V:V\hookrightarrow X$ of codimension $d$, we define the Gysin map $i _V^*$ to be the following composition (well defined in $\mathcal{H}^{-1}AbTop$):
$$i _V^*: Z _r(X)\stackrel{pr _1^*}{\rightarrow} Z _{r+1}(X\times \mathbb{A}^1)\stackrel{\delta}{\rightarrow} Z _r(N _VX)\stackrel{\pi^{*-1}}{\rightarrow}Z _{r-d}(V).$$
The left map is a flat pull-back of the projection on $X$, the middle map is the ``specialization map" given by the deformation of the normal cone $N _VX$ (\cite{Ful}, \cite{FG}) and the last map is the inverse of the flat pull-back vector bundle isomorphism $\pi^*: Z _{r-d}(V)\rightarrow Z _r(N _VX)$. 

Using these Gysin maps one can construct a well defined intersection product on cycle spaces in $\mathcal{H}^{-1}AbTop$. For example, if $i _V:V\hookrightarrow X$ is a regular embedding of an r-dimensional subvariety $V$ in the smooth quasi-projective variety $X$ of dimension d, then the intersection with $V$ on $X$ is given by the composition 
$$Z _s(X)\stackrel{i _{V}^*}{\rightarrow} Z _{r+s-d}(V) \stackrel{i _{V*}}{\rightarrow} Z _{r+s-d}(X)$$
where $i _V^*: Z _s(X){\rightarrow} Z _{s+r-d}(V)$ is the Gysin map associated to the regular embedding $V\hookrightarrow X$ and $r+s\geq d$. In general, we have the following theorem:
\begin{theorem}(\cite{FG}, Theorem 3.5)
If $X$ is a smooth quasi-projective variety of dimension $d$ and if $r+s\geq d$ then there is an intersection pairing (in $\mathcal{H}^{-1}AbTop$)
$$Z _r(X)\tensor Z _s(X)\rightarrow Z _{r+s-d}(X)$$
which on $\pi _0$ gives the usual intersection pairing on Chow groups of algebraic cycles modulo algebraic equivalence.
\end{theorem}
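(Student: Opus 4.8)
The plan is to define the pairing by the classical diagonal method, realizing it as the composition of an external product with the Gysin map of the diagonal embedding. Since $X$ is smooth of dimension $d$, the diagonal $\Delta: X \hookrightarrow X\times X$ is a regular embedding of codimension $d$, so the Gysin construction recalled above for regular embeddings applies verbatim to it.

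First I would construct a continuous external product on cycle spaces. On Chow monoids, the assignment $(V,W)\mapsto V\times W$ defines a biadditive pairing $\mathscr{C} _r(X)\times\mathscr{C} _s(X)\rightarrow\mathscr{C} _{r+s}(X\times X)$ which is a morphism of varieties in each slot (hence continuous) and respects degrees. Passing to naive group completions and the quotient topologies, this extends to a morphism in $\mathcal{H}^{-1}AbTop$,
$$Z _r(X)\tensor Z _s(X)\rightarrow Z _{r+s}(X\times X).$$
Composing with the diagonal Gysin map $\Delta^*\colon Z _{r+s}(X\times X)\rightarrow Z _{r+s-d}(X)$ — obtained by applying the regular-embedding construction to $V=\Delta(X)$, i.e. flat pullback, specialization to the deformation of the normal cone, and the inverse of the vector bundle isomorphism $\pi^*$ — yields the desired pairing. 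The hypothesis $r+s\geq d$ guarantees $r+s-d\geq 0$, so the target is an ordinary cycle space.

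For the $\pi _0$ statement, recall that $\pi _0 Z _n(X)=A _n(X)$, the group of $n$-cycles modulo algebraic equivalence. I would verify that the external product induces on $\pi _0$ the external product of cycle classes, and that $\Delta^*$ induces Fulton's refined Gysin homomorphism attached to $\Delta$; by Fulton's intersection theory the composite $\alpha\tensor\beta\mapsto\Delta^*(\alpha\times\beta)$ is then exactly the classical intersection product, reduced modulo algebraic equivalence. The main obstacle lies in the well-definedness of the Gysin map inside $\mathcal{H}^{-1}AbTop$: one must check that the flat pullback $\pi^*\colon Z _{r+s-d}(V)\rightarrow Z _{r+s}(N _V(X\times X))$, with $V=\Delta(X)$, is genuinely a homotopy equivalence so that its inverse exists in the homotopy category, and that the specialization map arising from the deformation to the normal cone is continuous and well-defined up to homotopy. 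The most delicate point is the $\pi _0$ comparison itself, namely identifying the topological deformation-to-normal-cone specialization with Fulton's algebraic specialization homomorphism; it is precisely this identification that forces the topological construction to recover the classical intersection product on cycles modulo algebraic equivalence.
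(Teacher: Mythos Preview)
This theorem is not proved in the paper; it is quoted verbatim from Friedlander--Gabber \cite{FG}, Theorem~3.5, and used as a black box. There is therefore no proof in the paper to compare against.

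That said, your proposal is exactly the construction carried out in \cite{FG}: the intersection pairing is defined as the composition of the external product $Z_r(X)\otimes Z_s(X)\to Z_{r+s}(X\times X)$ with the Gysin map $\Delta^*$ of the diagonal regular embedding, and the identification on $\pi_0$ with Fulton's intersection product goes through the compatibility of the topological specialization-to-the-normal-cone with the algebraic one. The points you flag as delicate (homotopy invertibility of the vector-bundle pullback, continuity and well-definedness of the specialization map, and the $\pi_0$ comparison) are precisely the technical content of \cite{FG}, and the paper under review simply takes all of this as given.
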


According to (\cite{FM2}, \cite{LF}), there is an operation on Lawson groups, called s-map
$$s:L _rH _n(X)\rightarrow L _{r-1}H _n(X) $$
for any quasi-projective variety $X$.   The inverse of the isomorphism  (\ref{qn}) is given in the following proposition.
\begin{proposition}
\label{sneg}
The s-map on the negative Lawson homology of a smooth projective variety $X$ of dimension $d$ is an isomorphism, i.e.
$$L _0H _n(X)\stackrel{s}{\simeq} L _{-1}H _n(X)\stackrel{s}{\simeq} L _{-2}H _n(X)\stackrel{s}{\simeq}... $$
\end{proposition}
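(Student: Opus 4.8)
The plan is to reduce the statement to the identification (\ref{qn}) and Theorem \ref{dual}, and then to check that the s-map is compatible with the structural isomorphisms appearing there. First I would fix $q\le 0$ and combine the definition $Z_q(X)=Z_0(X\times \mathbb{A}^{-q})$ with Theorem \ref{dual}, which identifies $Z_q(X)$ up to homotopy with the cocycle space $Z^{d-q}(X)$, where $d-q\ge d=\dim X$. Under this dictionary the homology s-maps $s:L_qH_n(X)\to L_{q-1}H_n(X)$ correspond to the stabilization maps on cocycle spaces $Z^{d-q}(X)\to Z^{d-q+1}(X)$, i.e. to the maps raising the codimension index $t=d-q$ by one. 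Thus it would suffice to prove that, in the stable range $t\ge \dim X$, these cocycle stabilization maps are homotopy equivalences.

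The equivalent, and more hands-on, formulation is to identify the effect of $s$ on homotopy groups directly. By (\ref{qn}) the Dold--Thom isomorphism $\pi_*\,Z_0(U)\cong H^{BM}_*(U^{an})$ applied to $U=X\times \mathbb{A}^{-q}$ gives $L_qH_n(X)\cong H^{BM}_{n-2q}(X\times \mathbb{A}^{-q})\cong H^{BM}_n(X^{an})$, the last isomorphism being homotopy invariance of Borel--Moore homology. What I then need to extract is that the map of cycle spaces underlying $s$, namely $Z_0(X\times \mathbb{A}^{-q})\to \Omega^2 Z_0\big((X\times \mathbb{A}^{-q})\times \mathbb{A}^1\big)$, induces under these Dold--Thom isomorphisms exactly the Borel--Moore suspension, i.e. the exterior product with the fundamental class $[\mathbb{A}^1]\in H^{BM}_2(\mathbb{A}^1)$. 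Since that exterior product is an isomorphism, each s-map in the chain is an isomorphism and the proposition follows.

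The main obstacle is precisely this last compatibility: matching the homotopy-theoretic definition of the s-map with the suspension isomorphism of Borel--Moore homology (equivalently, with the stabilization map on cocycle spaces under the duality of Theorem \ref{dual}). Once the s-map is recognized as ``multiplication by the canonical generator'', it becomes, after the identification (\ref{qn}), an invertible operation, and everything else in the argument is formal, reducing to the recollections already assembled, namely Theorem \ref{dual} and the Dold--Thom/homotopy-invariance statement behind (\ref{qn}). I would therefore spend the bulk of the proof verifying this single compatibility and treat the reduction steps as bookkeeping.
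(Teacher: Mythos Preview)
Your plan is correct and is essentially the paper's proof. The paper carries out exactly your first paragraph: it invokes \cite{FL1}, Proposition 2.6 for the commutative square showing that the duality $\mathcal{D}$ of Theorem \ref{dual} intertwines the s-map on cycle spaces with the s-map on cocycle spaces, and then cites \cite{FL1}, Theorem 5.8 for the fact that the cocycle s-map $Z^t(X)\wedge S^2\to Z^{t+1}(X)$ is a homotopy equivalence once $t\ge d$. The ``main obstacle'' you single out---the compatibility of $s$ with the structural identifications---is precisely the content of that cited Proposition 2.6, so rather than verifying it by hand you can quote it; your Borel--Moore reformulation in the second paragraph is a valid repackaging of the same fact through Dold--Thom, but it is not needed once the reference is in hand.
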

\begin{proof}
According to (\cite{FL1}, Proposition 2.6), for any smooth projective variety $X$ of dimension $d$ and for any $t\geq 0$ we have the following commutative diagram in ($\mathcal{H}^{-1}AbTop$):
$$\begin{CD}
      Z^t(X)\wedge S^2 @>\stackrel{\mathcal{D}\wedge 1}{\simeq}>> Z _d(X\times \mathbb{A}^t)\wedge S^2\\
                     @V s VV         @VV s V\\
      Z^{t+1}(X)  @>\stackrel{\mathcal{D}}{\simeq}>> Z _{d-1}(X\times \mathbb{A}^t).\\
\end{CD}$$
If $t\geq d$, then the left vertical arrow is a homotopy equivalence (\cite{FL1}, Theorem 5.8). This implies that the right vertical s-map is a homotopy equivalence. Then we have
$$\pi _n(Z  _0(X))\stackrel{s}{\simeq} \pi _{n+2}(Z _{-1}(X))=L _{-1}H _n(X)\stackrel{s}{\simeq} \pi _{n+4}(Z _{-2}(X))=L _{-2}H _n(X)\stackrel{s}{\simeq}...$$
for any $n\geq 0$.
\end{proof}
The s-map is a natural map that commutes with push-forwards, flat pull-backs, Gysin maps, intersection with a cycle and with localization sequences (\cite{Fil}, Page 4 and Proposition 1.7;  \cite{FL1}, Proposition 2.3).

We let $cyc _{p,q}$ denote the (generalized) cycle maps
$$cyc _{p,q}:L _pH _q(X)\rightarrow H _q^{BM}(X^{an})$$
for any quasi-projective variety $X$. We recall that if  $X$ is a smooth projective variety then $H _q^{BM}(X^{an})=H _q(X^{an})$. The generalized cycle maps are compositions of s-maps (\cite{FM2}, \cite{Fil}), i.e.
$$cyc _{p,q}:L _pH _q(X)\stackrel{s}{\rightarrow} L _{p-1}H _q(X)\stackrel {s}{\rightarrow}...\stackrel{s}{\rightarrow}L _1H _q(X)\stackrel{s}{\rightarrow}L _0H _n(X)=H _n^{BM}(X^{an}) $$ 
for any $q\geq 2p\geq 0.$ If $p<0$, then the maps $cyc _{p,q}$ are isomorphisms (Proposition \ref{sneg} and the isomorphism (\ref{qn})).

The above decomposition gives a filtration on the kernel of $cyc _{p,q}$.  The kernel of $cyc _{r,2r}$ is the Griffiths group of algebraic r-cycles \cite{Fr}. Let 
$$Z _r(X)\stackrel{\pi}{\rightarrow} \ L _rH _{2r}(X)=\pi _0(Z _r(X))\stackrel{s}{\rightarrow} L _{r-1}H _{2r}(X)\stackrel{s}{\rightarrow}...\stackrel{s}{\rightarrow}H _{2r}(X).$$
Define $S _iZ _r(X)=Ker(s^i\circ \pi)$. We know that 
$$S _0Z _r(X)=\textnormal{\{algebraic cycles algebraically equivalent to zero\}}$$ and that $$S _rZ _r(X)=\textnormal{\{algebraic cycles homologically equivalent to zero\}}.$$ If we take the quotient of the above filtration by $Ker (\pi)$, we obtain a filtration on the Griff$ _r(X)$ called the s-filtration of the Griffiths groups. This is
\begin{equation}
\label{s-filtration}
0\subset S _1Z _r(X)/S _0Z _r(X)\subset ..\subset \textnormal{Griff} _r(X)=S _rZ _r(X)/S _0Z _r(X).
\end{equation}

We let  $L^{hom} _pH _q(X)=Ker(cyc _{p,q})$ and $C _{p,q}(X)=Coker (cyc _{p,q})$ to be the kernel and the cokernel of the maps $cyc _{p,q}$ .  

The following proposition was proved in \cite{Voin}:

\begin{proposition}
\label{propo}(\cite{Voin})

Let $X$ be a smooth projective variety of dimension $d$. Assume that the Bloch-Kato conjecture is valid for all the primes.  Then:     

a) Let $n\geq d+q-1$. Then $ L^{hom} _qH _n(X)$ is divisible and $C _{q,n}(X)$ is torsion free. 

b) $L _qH _n(X)$ is uniquely divisible for $n>2d$ and $L _qH _{2d}(X)$ is torsion free (for any $q\leq d)$. 

\end{proposition}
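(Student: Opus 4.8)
The plan is to reduce both parts to the behaviour of the cycle map \emph{with finite coefficients} and then run Bockstein/universal-coefficient arguments; everything of substance is hidden in one comparison statement, which is where the hypothesis enters. Via the duality of Theorem \ref{dual} and the identification of Lawson homology with finite coefficients with motivic cohomology with finite coefficients \cite{FW2}, one has $L _qH _n(X;\mathbb{Z}/m)\cong H^{2d-n} _{\mathcal M}(X,\mathbb{Z}/m(d-q))$, while $H _n(X^{an};\mathbb{Z}/m)$ is Poincar\'e dual to $H^{2d-n} _{et}(X,\mathbb{Z}/m(d-q))$, and the finite-coefficient cycle map is the Beilinson--Lichtenbaum comparison map between these. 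Since Bloch--Kato is assumed for all primes, Beilinson--Lichtenbaum holds \cite{SV1}, so this map is an isomorphism for $2d-n\le d-q$ and a monomorphism for $2d-n=d-q+1$; that is, writing $cyc _{q,n}^{\mathbb{Z}/m}\colon L _qH _n(X;\mathbb{Z}/m)\to H _n(X^{an};\mathbb{Z}/m)$, the map is an isomorphism for $n\ge d+q$ and a monomorphism for $n=d+q-1$, for every $m$. All remaining steps are formal, using the two multiplication-by-$m$ long exact sequences, their universal-coefficient subsequences, and naturality of $cyc$ in the coefficient sequence $0\to\mathbb{Z}\xrightarrow{m}\mathbb{Z}\to\mathbb{Z}/m\to 0$.

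For part (b) I would argue directly. For $n>2d$ we have $H _n(X^{an})=0$ and $H _{n-1}(X^{an})$ torsion free, so $H _n(X^{an};\mathbb{Z}/m)=0$; since $q\le d$ gives $n>2d\ge d+q$, the comparison yields $L _qH _n(X;\mathbb{Z}/m)=0$ for all $m$. Feeding this into
$$0\to L _qH _n(X)\otimes\mathbb{Z}/m\to L _qH _n(X;\mathbb{Z}/m)\to {} _mL _qH _{n-1}(X)\to 0$$
forces $L _qH _n(X)\otimes\mathbb{Z}/m=0$ and ${} _mL _qH _{n-1}(X)=0$ for all $m$ whenever $n>2d$. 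The first says $L _qH _n(X)$ is divisible for $n>2d$; the second, read in degrees $n$ and $n+1$, says $L _qH _j(X)$ is torsion free for $j\ge 2d$. Hence $L _qH _n(X)$ is uniquely divisible for $n>2d$ and $L _qH _{2d}(X)$ is torsion free.

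For part (a) fix $n\ge d+q-1$, write $f=cyc _{q,n}$, and let $\rho$ and $\delta$ be reduction and Bockstein in each theory. The snake lemma applied to the map of universal-coefficient sequences extracts two facts: since $cyc _{q,n}^{\mathbb{Z}/m}$ is a monomorphism, $f\otimes\mathbb{Z}/m$ is injective; and since $cyc _{q,n+1}^{\mathbb{Z}/m}$ is onto, the map on $m$-torsion ${} _mf\colon {} _mL _qH _n(X)\to {} _mH _n(X^{an})$ is surjective (both use exactly $n\ge d+q-1$). To see $L^{hom} _qH _n(X)=\ker f$ is divisible, take $x\in\ker f$: then $cyc _{q,n}^{\mathbb{Z}/m}(\rho x)=\rho(fx)=0$, so $\rho x=0$ and $x=my$; as $mf(y)=f(x)=0$, surjectivity of ${} _mf$ gives an $m$-torsion class $z$ with $f(z)=f(y)$, whence $x=m(y-z)$ with $y-z\in\ker f$. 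Thus $\ker f=m\ker f$ for all $m$.

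To see $C _{q,n}(X)=\mathrm{coker}\,f$ is torsion free it suffices that ${} _pC _{q,n}(X)=0$ for every prime $p$. Surjectivity of ${} _pf$ gives ${} _pH _n(X^{an})\subseteq\mathrm{im}\,f$, so in the $\mathrm{Tor}$-sequence of $0\to\mathrm{im}\,f\to H _n(X^{an})\to C _{q,n}(X)\to 0$ the map ${} _pH _n(X^{an})\to {} _pC _{q,n}(X)$ is zero; hence the connecting map ${} _pC _{q,n}(X)\hookrightarrow\mathrm{im}\,f\otimes\mathbb{Z}/p$ is injective with image the kernel of $\beta\colon\mathrm{im}\,f\otimes\mathbb{Z}/p\to H _n(X^{an})\otimes\mathbb{Z}/p$. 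But $f\otimes\mathbb{Z}/p$ factors as $L _qH _n(X)\otimes\mathbb{Z}/p\twoheadrightarrow\mathrm{im}\,f\otimes\mathbb{Z}/p\xrightarrow{\beta}H _n(X^{an})\otimes\mathbb{Z}/p$ with the first arrow onto, so injectivity of $f\otimes\mathbb{Z}/p$ forces $\beta$ injective, giving ${} _pC _{q,n}(X)=0$. The one genuinely nonformal point, and the step I expect to be the main obstacle, is the index bookkeeping in the comparison isomorphism: verifying that under duality the Lawson indices $(q,n)$ go to motivic bidegree $(2d-n,\,d-q)$ and that the Beilinson--Lichtenbaum range (isomorphism for degree $\le$ weight, monomorphism one above) translates \emph{exactly} into $n\ge d+q$ and $n=d+q-1$. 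Once that dictionary is pinned down, the two chases above are routine and deliver precisely the ranges asserted in (a) and (b).
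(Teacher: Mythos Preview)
The paper does not supply its own proof of this proposition; it is stated with a citation to \cite{Voin} and used as a black box. So there is no ``paper's proof'' to compare against here.

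Your argument is correct and is the standard one. The reduction to the Beilinson--Lichtenbaum comparison via the duality of Theorem~\ref{dual} and the identification with motivic cohomology with finite coefficients is exactly how this is typically proved, and your index bookkeeping (iso for $2d-n\le d-q$, mono for $2d-n=d-q+1$, translating to $n\ge d+q$ and $n=d+q-1$) is right. The Bockstein chases for (a) are clean: the divisibility argument for $\ker f$ using injectivity of $cyc^{\mathbb{Z}/m}_{q,n}$ together with surjectivity of ${}_mf$ (from surjectivity of $cyc^{\mathbb{Z}/m}_{q,n+1}$) is correct, and the torsion-freeness of the cokernel via the Tor-sequence factorisation of $f\otimes\mathbb{Z}/p$ works as you wrote. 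Part (b) is immediate from the finite-coefficient vanishing once you note $H_{2d}(X^{an})$ is torsion free. This is presumably the argument in the cited paper \cite{Voin}, so you have reconstructed what the author had in mind.
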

The following theorem is the projective bundle theorem in Lawson homology.
\begin{theorem} (\cite{FG},\cite{Hu})
\label{projb}
Let $E$ be a rank $r+1$ vector bundle over a quasi-projective variety $Y$, let $p:P(E)\rightarrow Y$  be the canonical map and  let $O _{P(E)}(1)$ be the canonical line bundle on $P(E)$. Let $h=c _1(O _{P(E)}(1))$. Then
$$\phi=\sum _{j=0}^rh^{r-j}\circ p^*:\oplus _{j=0}^{r}Z _{i-j}(Y)\rightarrow Z _i(P(E))$$
is a homotopy equivalence for any $i\geq 0$. In particular
$$\phi=\sum _{j=0}^rh^{r-j}\circ p^*:\oplus _{j=0}^{r}\pi _k Z _{i-j}(Y)\rightarrow \pi _k Z _i(P(E))$$
is an isomorphism for any $k\geq 0$, $i\geq 0$

\end {theorem}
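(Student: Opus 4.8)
The plan is to prove Theorem \ref{projb} by a double induction, reducing the general projective bundle to a product $Y\times\mathbb{P}^r$ and then reducing $\mathbb{P}^r$ to $\mathbb{P}^{r-1}$ via the cell decomposition $\mathbb{P}^r=\mathbb{A}^r\sqcup\mathbb{P}^{r-1}$. The two essential inputs, both available in the excerpt's category $\mathcal{H}^{-1}AbTop$, are the long exact localization sequence attached to a closed embedding and the naturality of flat pull-backs, push-forwards and Gysin maps (hence of intersection with $h$) with respect to localization boundary maps; I will also use the homotopy invariance $Z_i(Y\times\mathbb{A}^r)\simeq Z_{i-r}(Y)$, given by flat pull-back, which is the basic property of cycle spaces implicit in (\ref{qn}).

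First I would treat the trivial bundle $P(E)=Y\times\mathbb{P}^r$ by induction on $r$, the case $r=0$ being the identity. Writing $\mathbb{P}^r=\mathbb{A}^r\sqcup\mathbb{P}^{r-1}$ with $\mathbb{P}^{r-1}$ the hyperplane at infinity, the localization sequence for the closed embedding $\iota:Y\times\mathbb{P}^{r-1}\hookrightarrow Y\times\mathbb{P}^r$ reads
$$\cdots\to \pi_* Z_i(Y\times\mathbb{P}^{r-1})\xrightarrow{\iota_*}\pi_* Z_i(Y\times\mathbb{P}^r)\xrightarrow{\mathrm{res}}\pi_* Z_i(Y\times\mathbb{A}^r)\to\cdots.$$
The restriction map is split by the flat pull-back $p^*:Z_{i-r}(Y)\to Z_i(Y\times\mathbb{P}^r)$, since its composite with restriction to $Y\times\mathbb{A}^r$ is, under homotopy invariance, the identity of $Z_{i-r}(Y)$; hence the sequence splits and the boundary maps vanish. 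By the inductive hypothesis $\pi_* Z_i(Y\times\mathbb{P}^{r-1})\cong\bigoplus_{j=0}^{r-1}\pi_* Z_{i-j}(Y)$ via $\sum_{j}(h')^{r-1-j}\circ(p')^*$, where $h'=\iota^*h$ and $p'$ is the projection. The projection formula together with $\iota_*[\,Y\times\mathbb{P}^{r-1}\,]=h\cdot[\,Y\times\mathbb{P}^r\,]$ gives $\iota_*\bigl((h')^{r-1-j}\circ(p')^*\alpha\bigr)=h^{r-j}\circ p^*\alpha$, so the image of $\iota_*$ realizes exactly the summands $j=0,\dots,r-1$ of $\phi$, while the split summand $\pi_* Z_{i-r}(Y)$ realizes the $j=r$ term $h^0\circ p^*=p^*$. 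This identifies the assembled isomorphism with $\phi$ and settles the trivial case.

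For a general bundle $E$ of rank $r+1$ I would argue by noetherian induction on $\dim Y$. Choose a nonempty open $U\subseteq Y$ over which $E$ is trivial and set $W=Y\setminus U$, a proper closed subvariety with $\dim W<\dim Y$ (stratifying $W$ into smooth pieces if necessary). Pulling back along $p$, the loci $p^{-1}(W)=P(E|_W)$ and $p^{-1}(U)=P(E|_U)$ are respectively closed and open in $P(E)$, so the localization sequences for $W\subset Y$ (summed over $j$) and for $P(E|_W)\subset P(E)$ fit, via naturality of $p^*$, of intersection with $h$ and of push-forward, into a commuting ladder relating $\phi_W$, $\phi_Y$ and $\phi_U$. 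Here $\phi_U$ is an isomorphism by the trivial case just proved, and $\phi_W$ is an isomorphism by the inductive hypothesis applied to $E|_W\to W$; the five lemma then shows $\phi_Y$ is an isomorphism on all homotopy groups, whence a homotopy equivalence in $\mathcal{H}^{-1}AbTop$ by Whitehead's theorem.

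The step I expect to be the main obstacle is the bookkeeping in the two ladders: one must verify that $\phi$ is genuinely natural with respect to the localization sequences, that is, that intersection with $h$, flat pull-back and push-forward all commute with the boundary maps. These compatibilities are exactly the naturality statements quoted in the excerpt, but matching the explicit powers $h^{r-j}$ on the nose---in particular checking $\iota_*\circ\bigl((h')^{r-1-j}(p')^*\bigr)=h^{r-j}p^*$ via the projection formula, and that the splitting of the localization sequence is compatible with these powers---is the delicate part, and it is where the precise geometry of $O_{P(E)}(1)$ (the relation $\iota_*[\text{hyperplane}]=h$) enters.
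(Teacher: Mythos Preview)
The paper does not give its own proof of Theorem \ref{projb}; it is quoted from \cite{FG} and \cite{Hu} as a known result. So there is nothing to compare against directly.

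Your argument is the standard one for projective bundle formulas and is essentially how the result is proved in the cited references: first establish the trivial case $Y\times\mathbb{P}^r$ by induction on $r$ using the localization sequence for $\mathbb{P}^{r-1}\hookrightarrow\mathbb{P}^r$ and homotopy invariance for $\mathbb{A}^r$, then reduce the general bundle to the trivial one by a noetherian (Zariski-open) induction and the five lemma. Your identification $\iota_*\bigl((h')^{r-1-j}(p')^*\alpha\bigr)=h^{r-j}p^*\alpha$ via the projection formula and $\iota_*\iota^*(-)=h\cdot(-)$ is correct, and the splitting you write down is the right one.

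Two small points. First, in the general step $W=Y\setminus U$ need not be smooth or irreducible, so your parenthetical ``stratifying $W$ into smooth pieces if necessary'' is doing real work: the clean way is to run a genuine noetherian induction on closed subschemes of $Y$ (or on $\dim$ together with the number of irreducible components), noting that the localization sequence and the Chern class operator $h\cap(-)$ are defined for arbitrary quasi-projective schemes in \cite{FG}, not just smooth ones. Second, the compatibility you flag as the ``main obstacle''---that $c_1(O_{P(E)}(1))\cap(-)$ and $p^*$ commute with the boundary maps in the localization sequence---is precisely what is established in \cite{FG}; once you invoke that, the ladder commutes on the nose and the five lemma finishes the proof. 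With these two clarifications your outline is complete.
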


 We will refer later in this paper to the following two conjectures.
\begin {conjecture} (Suslin's conjecture)
\label{sc}
The generalized cycle map
$$cyc _{q,n}:L _qH _n(X)\rightarrow H _n(X^{an})$$
is an isomorphism for $n\geq d+q$ and a monomorphism for $n\geq d+q-1$ for any smooth projective variety $X$ of dimension $d$.
\end{conjecture}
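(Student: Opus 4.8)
Since this is Suslin's conjecture --- an open problem, not a theorem to be settled in this paper --- any ``proof proposal'' can only be the line of attack that yields the special cases treated here and that one would hope to push further. The natural plan is to split the integral statement into a torsion part and a torsion-free part, which are governed by completely different inputs. For the torsion part I would pass to finite coefficients: by \cite{FW2}, combined with the duality of Theorem \ref{dual}, the Lawson homology of a smooth projective $X$ with $\mathbb{Z}/m$-coefficients is motivic cohomology with $\mathbb{Z}/m$-coefficients, and Beilinson--Lichtenbaum --- equivalent by \cite{SV1} to the now-proven Bloch--Kato conjecture --- identifies this with singular homology with $\mathbb{Z}/m$-coefficients exactly in the range $n\geq d+q$ (isomorphism) and $n\geq d+q-1$ (injection). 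Reinterpreting the resulting finite-coefficient statement integrally is precisely Proposition \ref{propo}: in the same range $L^{hom}_q H_n(X)=\mathrm{Ker}(cyc_{q,n})$ is divisible and the cokernel $C_{q,n}(X)$ is torsion free.

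Granting Bloch--Kato, the conjecture therefore reduces, in the stated range, to a single torsion-free assertion: the divisible kernel $L^{hom}_q H_n(X)$ is in fact $0$, equivalently the rational cycle map $cyc_{q,n}\otimes\mathbb{Q}$ is injective for $n\geq d+q-1$ and surjective for $n\geq d+q$. Surjectivity after $\otimes\mathbb{Q}$ follows once one knows the relevant singular homology is carried by the algebraic part (for hypersurfaces outside the middle dimension this is Proposition \ref{p1}), so the genuine content is rational injectivity of the s-maps. This cannot come from Bloch--Kato: the divisible group it leaves undetected may be an arbitrarily large $\mathbb{Q}$-vector space, and the paper itself exhibits such an infinitely generated kernel for the generic cubic sevenfold in the lowest nontrivial step of the s-filtration \eqref{s-filtration}.

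To handle the torsion-free part I would use geometry rather than motivic machinery. The factorization $cyc_{q,n}=s\circ\cdots\circ s$ writes the kernel as a finite filtration $S_\bullet Z_r(X)$, so rational injectivity amounts to showing that each successive s-map is injective after $\otimes\mathbb{Q}$. For rationally connected hypersurfaces carrying enough $k$-planes, Lewis's cylindrical-homomorphism construction produces, from a sweeping family of planes, a surjection from a lower-dimensional, better-understood cycle space onto the cycle group in question; combined with the weak-Lefschetz comparison for Lawson homology developed in the third section, one aims to show the intermediate $L_pH_n(X)$ already coincide with singular homology, forcing the s-maps to be isomorphisms. The main obstacle is exactly this last step --- controlling the kernels of the individual s-maps, the uniquely divisible groups that Bloch--Kato cannot reach and that are expected to be huge in general. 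Accordingly the realistic target is not arbitrary $X$ but the special geometric cases (the generic cubics of dimension $5,6,8$), where a Hodge-theoretic vanishing of the primitive cohomology lets the cylindrical image exhaust the relevant cycle group.
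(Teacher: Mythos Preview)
Since the statement is a \emph{conjecture}, the paper contains no proof of it; there is nothing to compare your proposal to at the level of a proof of Conjecture~\ref{sc} itself. What can be compared is your outlined strategy against the route the paper actually takes to verify the conjecture for the generic cubics of dimension $5$, $6$, and $8$, and here your summary is essentially accurate: the split into a torsion part (handled by Bloch--Kato via Proposition~\ref{propo}) and a rational part (handled by Lewis's cylindrical construction plus the weak-Lefschetz result of Theorem~\ref{the2}) is exactly the architecture of Corollaries~\ref{qc} and~\ref{g8}.

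One correction to your last paragraph. The mechanism that kills the rational kernels in the paper is \emph{not} a ``Hodge-theoretic vanishing of the primitive cohomology.'' What actually happens (Theorem~\ref{t3}) is that the cylindrical map $\pi_*\circ i_*\circ\rho_X^*$ gives a \emph{surjection}
\[
L^{hom}_{r-k}H_{*+2(r-k)}(\Omega_X)_\mathbb{Q}\twoheadrightarrow L^{hom}_rH_{*+2r}(X)_\mathbb{Q},
\]
and the left-hand side vanishes for $r\leq k$ or $r\geq n-k-1$ purely for \emph{dimension} reasons: $\Omega_X$ is a smooth variety of dimension $n-2k$, so the relevant index $r-k$ is either $\leq 0$ or in the top-two range where Lawson homology already agrees with singular homology. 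No Hodge structure of $X$ enters. Similarly, rational surjectivity in the middle dimension (Theorem~\ref{t4}) comes from the same cylindrical surjection on cokernels, not from any primitive-cohomology argument. Your description of the obstruction---that the uniquely divisible part is invisible to Bloch--Kato and can be infinitely generated (the cubic sevenfold, Corollary~\ref{c46})---is correct and is precisely why the paper cannot push beyond these special hypersurfaces.
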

We notice that this conjecture contains a conjecture due to E. Friedlander and B. Mazur \cite{FM2}.

\begin{conjecture}(Friedlander-Mazur conjecture)
For any complex smooth projective variety $X$ of dimension $d$
$$L _qH _n(X)=0$$
for any $n>2d$.
\end{conjecture}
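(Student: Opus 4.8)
The plan is to split the statement according to the sign of $q$ and to exploit the two structural inputs already recorded, namely the identification (\ref{qn}) of negative Lawson homology with Borel--Moore homology and the Poincar\'e duality equivalence of Theorem \ref{dual}. For $q\leq 0$ the conjecture is immediate: when $q<0$ the isomorphism (\ref{qn}) gives $L_qH_n(X)\simeq H_n^{BM}(X^{an})=H_n(X^{an})$ since $X$ is projective, and $L_0H_n(X)=H_n(X^{an})$ by the Dold--Thom theorem; as $X^{an}$ is compact of real dimension $2d$, its singular homology vanishes for $n>2d$. This also reduces the problem to the finite range $1\leq q\leq d$, because for $q>d$ there are no $q$-dimensional cycles on $X$, so $Z_q(X)=0$ and the groups vanish trivially.

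For $1\leq q\leq d$ I would transport the question into morphic cohomology. Putting $t=d-q\geq 0$, Theorem \ref{dual} furnishes a homotopy equivalence $Z^{d-q}(X)\simeq Z_q(X)$, whence
$$L_qH_n(X)=\pi_{n-2q}(Z_q(X))\simeq \pi_{n-2q}(Z^{d-q}(X))=L^{d-q}H^{2d-n}(X),$$
the last equality being the definition of morphic cohomology as the homotopy of the cocycle space $Z^{d-q}(X)$. Under this dictionary the hypothesis $n>2d$ is exactly the condition that the cohomological degree $2d-n$ is strictly negative, so in this range the conjecture is equivalent to the vanishing of morphic cohomology $L^sH^m(X)$ in all negative degrees $m<0$, with $s=d-q$ running over $0\leq s\leq d-1$. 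As supporting evidence that this reformulation is reasonable, the projective bundle theorem (Theorem \ref{projb}) shows the desired vanishing is stable under forming projective bundles: each summand $L_{i-j}H_{n-2j}(Y)$ of $L_iH_n(P(E))$ has $n-2j>2\dim Y$ once $n>2\dim P(E)$, so the conjecture for $Y$ propagates to $P(E)$, suggesting a d\'evissage approach whenever $X$ can be degenerated to such models.

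The hard part is precisely the negative-degree vanishing $L^sH^m(X)=0$ for $m<0$. There is no elementary dimension count that kills the higher homotopy of a cocycle space $Z^s(X)$, since these are naive group completions of infinite disjoint unions of mapping spaces into Chow monoids, and their homotopy type is not governed by a crude dimension bound. I would attempt the vanishing by induction on $d$, using a generic pencil to produce a localization sequence (via the exact sequences and Gysin maps of Section 1, together with the naturality of the $s$-map recorded after Proposition \ref{sneg}) relating $Z^s(X)$ to the cocycle spaces of a smooth hyperplane section and of an affine complement. The genuine obstacle is that the morphic cohomology of the non-compact affine piece need not vanish in the expected range, so the induction does not close without an independent input controlling the negative-degree morphic cohomology of open varieties; this is exactly why the statement is recorded here as a conjecture rather than established as a theorem.
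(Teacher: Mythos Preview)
The statement you were given is the Friedlander--Mazur \emph{conjecture}: the paper records it as an open problem and provides no proof of it, so there is no argument in the paper to compare your proposal against. The paper only verifies the conjecture in isolated cases, as a by-product of explicit Lawson homology computations for particular generic cubic hypersurfaces (see e.g.\ Corollary~\ref{g8}, point 16), and Remark~\ref{eiel}).

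Your treatment of the trivial ranges is correct: for $q\le 0$ the identification with Borel--Moore homology and the real dimension bound dispose of the claim, and for $q>d$ the cycle space is zero. Your duality reformulation of the nontrivial range $1\le q\le d$ as the vanishing of morphic cohomology $L^{d-q}H^{2d-n}(X)$ in negative cohomological degree is also correct and is a well-known equivalent form of the conjecture. However, as you yourself acknowledge in the final paragraph, the inductive d\'evissage you sketch does not close: controlling the morphic cohomology of the open complement in a Lefschetz pencil in the required range is precisely the missing input, and no general argument is known. So your proposal is not a proof but an accurate diagnosis of where the difficulty lies---which is consistent with the paper's decision to state this as a conjecture.
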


These highly non-trivial conjectures have been checked on certain varieties, like curves, surfaces, rationally connected threefolds and fourfolds, smooth projective toric varieties (\cite{FHW}, \cite{Voin}).

The (singular) semi-topological K-theory of a complex projective variety $X$ was introduced in \cite{FW}. This is defined by
$$K^{sst} _*(X)=\pi _*(Mor(X,Grass)^+)$$
where $Grass=\amalg _{n,N}Grass _{n}(\mathbb{P}^N)$. By $Mor(X,Grass)^+$, we define the topological group given by the homotopy completion of the space of algebraic maps between $X$ and $Grass$.  The main tool for computing $K^{sst} _*(X)$ is a spectral sequence with $E^2-$term given  by the Lawson homology of $X$, when $X$ is a smooth variety.
\begin{theorem}(\cite{FHW})
\label{ss}
For any smooth, projective complex variety $X$ and any abelian group $A$, there is a natural map of strongly convergent spectral sequences

\begin{equation*}
\xymatrix{
E^{p,q}_2(sst)=L^{-q}H^{p-q}(X,A) & {\Longrightarrow}\ar[d] & K^{sst} _{-p-q}(X,A) \\
E^{p,q}_2(top)=H^{p-q}(X^{an},A) & {\Longrightarrow} & ku^{p+q}(X^{an},A). 
}
\end{equation*} 
inducing the usual maps on both $E _2-$terms and abutments.
\end{theorem}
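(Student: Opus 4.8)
The plan is to realize both $K^{sst}(X)$ and $ku^*(X^{an})$ at the level of spectra and to filter each by a tower of spectra whose associated spectral sequence has the required form, the two towers being compatible through the topological realization map. First I would model $K^{sst}(X)$ by the Friedlander--Walker infinite loop space $Mor(X,Grass)^+$ and its associated spectrum, together with the topological realization (``semi-topologization'') map $K^{sst}(X)\to ku^*(X^{an})$ which on homotopy induces $K^{sst}_*(X)\to ku^{-*}(X^{an})$; this supplies the comparison of abutments at once.

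Next I would impose a weight filtration on each spectrum. On the topological side this is simply the skeletal/slice filtration producing the classical Atiyah--Hirzebruch spectral sequence for the connective $K$-theory spectrum $ku$, whose layers are Eilenberg--MacLane spectra and whose $E_2$-term is singular cohomology $H^{p-q}(X^{an},A)$. On the semi-topological side I would use the semi-topological analogue of Grayson's weight tower $\cdots \to K^{sst}(X)^{(q+1)}\to K^{sst}(X)^{(q)}\to\cdots$, obtained by applying topological realization to Grayson's purely algebraic construction of the motivic spectral sequence. Since topological realization is a functor of spectra, it carries the algebraic tower to (a model of) the skeletal tower, so the realization map is automatically filtration-preserving.

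The crucial step is to identify the successive layers (cofibres) of the semi-topological tower. I would show that the $q$-th layer is an Eilenberg--MacLane type spectrum whose homotopy groups are the morphic cohomology groups $L^{q}H^{*}(X,A)$; invoking the Poincar\'e duality of Theorem \ref{dual} these coincide with the Lawson (co)homology groups appearing in the statement, and matching the indexing $E_2^{p,q}(sst)=L^{-q}H^{p-q}(X,A)$ then follows from the reindexing conventions of the tower. Because the layer identification is natural in $X$ and compatible with realization, the map of towers induces a map of spectral sequences which on $E_2$ is exactly the cycle map $L^{-q}H^{p-q}(X,A)\to H^{p-q}(X^{an},A)$ and on abutments is $K^{sst}_{-p-q}(X,A)\to ku^{p+q}(X^{an},A)$, as required.

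Finally, strong convergence of both spectral sequences follows from boundedness of the filtrations: since $X$ is a fixed finite-dimensional smooth projective variety, the weight tower is bounded in each total degree, so the filtration is complete and exhaustive and the spectral sequences converge strongly. I expect the main obstacle to be the layer computation of the previous paragraph, namely the semi-topological analogue of the identification of the slices of algebraic $K$-theory with motivic cohomology (Bloch--Lichtenbaum, Friedlander--Suslin, Levine); this is where essentially all of the geometric content resides. Once the layers are identified with morphic cohomology, the remaining assembly of the towers, the comparison of spectral sequences, and the convergence statement are formal manipulations.
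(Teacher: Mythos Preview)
The paper does not prove this theorem at all: it is quoted with a bare citation to \cite{FHW} and used as a black box (for instance in Corollary~\ref{c4}), so there is no ``paper's own proof'' to compare against. Your outline is therefore not competing with anything in this text.

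That said, as a sketch of the actual argument in \cite{FHW} your proposal is on the right track: the spectral sequence is indeed obtained from a semi-topological weight/slice tower whose layers are identified with morphic cohomology (equivalently, via Theorem~\ref{dual}, Lawson homology), and the comparison with the Atiyah--Hirzebruch tower for $ku$ is induced by the natural transformation coming from topological realization. One point to sharpen: strong convergence is not merely a matter of $X$ being finite-dimensional; in \cite{FHW} it uses that the semi-topological weight tower stabilizes in bounded weight because $K^{sst}$ is a module over $K^{sst}(\mathbb{C})\simeq ku$ and the Bott element has weight one, so the tower is bounded above and below in each total degree. Also, the ``topological realization carries the algebraic tower to the skeletal tower'' step is not automatic and is one of the genuine verifications in \cite{FHW}; you should flag it as such rather than as a formality. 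With those two caveats, your plan matches the published construction.
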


\section{About the Fano variety of k-planes}

In this section, we describe a geometrical construction, introduced in \cite{Le}, for generic hypersurfaces $X$ with ``enough" $k-$planes  and prove Theorem \ref{the1} which will be the main tool in the fourth section.

We start this section with the definition of the Fano variety of k-planes on a
projective variety.
\begin{definition}
Let $Z\subset \mathbb{P}^N$ be a variety. We let 

$$\Omega _Z(k)=\{\mathbb{P}^{k's}\subset Z\}\subset Grass _k\mathbb{P}^N$$ 
be the set of all k-planes included in Z and call it the Fano variety of k-planes of Z.
\end{definition} 
The following theorem describes $\Omega _X(k)$ for  a generic hypersurface $X\subset \mathbb{P}^{n+1}$ of degree $d\leq n+1$.
\begin{theorem}(Borcea \cite{Bor}, Corollary 2.2)
\label{t101}

Let $X\subset \mathbb{P}^{n+1}$ be a generic hypersurface of degree $d\leq n+1$ and
let $k=[\frac{n+1}{d}]$. Then $\Omega _X(k)$ is non-empty and smooth of pure
dimension $$\gamma=(k+1)(n+1-k)-\dbinom{d+k}{k}$$ provided that  $\gamma\geq 0$ and
$X$ is not a quadric. In the case $X$ is a quadric we require $n\geq 2k$.
Furthermore, if $\gamma>0$ or if in the case $X$ a quadric with $n>2k$, then $\Omega
_X(k)$ is irreducible. 
\end{theorem}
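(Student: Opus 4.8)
The plan is to present $\Omega_X(k)$ as the zero scheme of a natural section of a vector bundle on the Grassmannian and to read off nonemptiness, dimension, smoothness and connectedness from the structure of that bundle. Write $G := \mathrm{Gr}(k+1,n+2) = Grass_k\mathbb{P}^{n+1}$, viewed as the Grassmannian of $(k+1)$-dimensional subspaces $W\subset\mathbb{C}^{n+2}$, so $\dim G = (k+1)(n+1-k)$, and let $\mathcal{S}\subset\mathcal{O}_G^{\oplus(n+2)}$ be the tautological rank-$(k+1)$ subbundle. A defining form $F\in V:=\mathrm{Sym}^d(\mathbb{C}^{n+2})^\vee$ of $X$ gives, by restriction $W\mapsto F|_W$, a section $s_F$ of $\mathcal{E}:=\mathrm{Sym}^d\mathcal{S}^\vee$, a bundle of rank $\binom{d+k}{k}$, and by construction $\Omega_X(k)=Z(s_F)$. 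Hence the expected dimension is $\dim G-\mathrm{rk}\,\mathcal{E}=(k+1)(n+1-k)-\binom{d+k}{k}=\gamma$. Two structural facts drive the proof: $\mathcal{S}^\vee$ is globally generated (a quotient of $\mathcal{O}_G^{\oplus(n+2)}$), hence so is $\mathcal{E}$; and the evaluation $V\to\mathcal{E}_{[W]}=\mathrm{Sym}^d W^\vee$ is surjective at every point, since a form on a linear subspace extends to $\mathbb{P}^{n+1}$.

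First I would settle nonemptiness, dimension and smoothness. Nonemptiness of the generic zero locus, once $\gamma\geq0$, follows from the non-vanishing of the top Chern class $c_{\mathrm{rk}\,\mathcal{E}}(\mathcal{E})\in H^*(G)$, a nonnegative combination of Schubert classes since $\mathcal{E}$ is globally generated (when $\gamma=0$ this class is the positive number of $k$-planes on $X$, e.g. $27$ for the lines on a cubic surface). For smoothness I would apply generic smoothness to the universal zero locus $\widetilde I=\{([W],F):F|_W=0\}\subset G\times V$, which is a vector subbundle over $G$, hence smooth and irreducible; because the subsystem $\{s_F\}$ generates $\mathcal{E}$ everywhere, the projection $\widetilde I\to V$ is dominant with generic fibre smooth of pure dimension $\gamma$, i.e. $\Omega_X(k)$ is smooth of pure dimension $\gamma$ for generic $X$. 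Infinitesimally this is the statement that $ds_F([W])\colon T_{[W]}G=H^0(\Lambda,N_{\Lambda/\mathbb{P}^{n+1}})\to\mathcal{E}_{[W]}=H^0(\Lambda,\mathcal{O}_\Lambda(d))$ is surjective, its kernel being $H^0(\Lambda,N_{\Lambda/X})=T_{[\Lambda]}\Omega_X(k)$; this comes from the normal bundle sequence $0\to N_{\Lambda/X}\to N_{\Lambda/\mathbb{P}^{n+1}}\to\mathcal{O}_\Lambda(d)\to0$ together with $N_{\Lambda/\mathbb{P}^{n+1}}\cong\mathcal{O}_\Lambda(1)^{\oplus(n+1-k)}$, whose $H^1$ vanishes.

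It then remains to prove irreducibility when $\gamma>0$, and I expect this connectedness statement to be the main obstacle. Since $\Omega_X(k)$ is already smooth it suffices to prove it is connected, and the natural route is a Lefschetz/Sommese-type connectedness theorem for the zero locus of a sufficiently positive bundle of positive-dimensional expected zero locus. The delicate point is that $\mathcal{E}=\mathrm{Sym}^d\mathcal{S}^\vee$ is \emph{not} ample: along each sub-Grassmannian of $k$-planes through a fixed point $[v_0]$ one has $\mathcal{S}^\vee\cong\mathcal{O}\oplus(\mathcal{S}')^\vee$, so $\mathcal{E}$ carries a trivial summand. This is precisely the failure of positivity that lets connectedness break in the extremal quadric case, so the argument must exploit the extra positivity present when $X$ is not a quadric; I would follow the connectedness analysis of Debarre–Manivel for linear subspaces of complete intersections, where exactly this phenomenon is controlled. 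Smoothness together with connectedness yields irreducibility, and the hypothesis $\gamma>0$ is essential, since at $\gamma=0$ the zero locus is a positive number of reduced points.

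Finally the quadric case $d=2$ must be treated by hand. Here $\mathcal{E}=\mathrm{Sym}^2\mathcal{S}^\vee$ and $\Omega_X(k)$ is the orthogonal Grassmannian of maximal isotropic $(k+1)$-subspaces of the nondegenerate form $F$: one needs $n\geq2k$ for isotropic $k$-planes to exist (giving nonemptiness), the zero scheme is smooth of dimension $\gamma=\binom{k+1}{2}$, and at the boundary $n=2k$ the two spinor families of maximal isotropics split $\Omega_X(k)$ into two isomorphic connected components, so irreducibility holds only for $n>2k$. These facts I would import from the classical structure theory of quadrics and spinor varieties rather than reprove.
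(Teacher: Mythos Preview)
The paper does not give a proof of this theorem at all: it is quoted verbatim as a result of Borcea (\cite{Bor}, Corollary~2.2) and used as a black box for the rest of the article. So there is no ``paper's own proof'' to compare your attempt against.

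That said, your outline is the standard and correct argument. Realising $\Omega_X(k)$ as the zero scheme of the tautological section of $\mathcal{E}=\mathrm{Sym}^d\mathcal{S}^\vee$ on $G=\mathrm{Gr}(k+1,n+2)$, with the universal incidence correspondence $\widetilde I\subset G\times V$ smooth over $G$ and dominant onto $V$, immediately yields nonemptiness, smoothness and the dimension $\gamma$ for generic $X$ by generic smoothness; this is exactly Borcea's setup. You are also right that connectedness is the delicate point and that $\mathcal{E}$ fails to be ample, so one needs the finer positivity analysis of the type carried out by Debarre--Manivel (or Borcea's original monodromy/incidence argument). Your treatment of the quadric boundary case, including the split into two spinor components when $n=2k$, is accurate. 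In short: the sketch is sound, but be aware that in the present paper this theorem is a cited input, not something reproved.
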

 We will study below only generic hypersurfaces $X\subset \mathbb{P}^{n+1}$ of
degree $3\leq d\leq n+1$ with the property that 
\begin{equation}
\label{eq11}
dim(\Omega _X(k))=(k+1)(n+1-k)-\dbinom{d+k}{k}\geq n-2k
\end{equation}
 where $k=[\frac{n+1}{d}]$. Theorem \ref{t101} says that such varieties have $\Omega _X(k)$ non-empty, smooth and
irreducible. Consider  the homogeneous polynomial $F(X _0, X _1,.., X _{n+1})$ of
degree $d$ that gives our $X$. J. Lewis (\cite{Le}, \cite{Le1}) introduced the
following construction: let $G(X _0, X _1,.., X _{n+2})=X^d _{n+2}+F(X _0, X _1,..,
X _{n+1})$ be a homogeneous polynomial of degree $d$ in $n+3$ variables that defines
a smooth hypersurface $Z\subset\mathbb{P}^{n+2}$. We notice that $X$ is a
hyperplane section of $Z$ if we consider the embedding 
$$\mathbb{P}^{n+1}=V(X _{n+2})\subset\mathbb{P}^{n+2}$$ 
where by  V(H) we understand the set of zeros of the homogenous polynomial H. Moreover $\Omega _Z(k)$ is smooth, irreducible and
of dimension $(n-k+1)+l$ (\cite{Le}), where $l=\gamma -(n-2k)\geq 0$.

 Let $\Omega _Z$ be the subvariety cut out by $l$ general hyperplane sections of the projective variety $\Omega _Z(k)$ in a projective embedding and let $\Omega _X=\Omega _X(k)\cap \Omega _Z$. 
 Using Bertini's theorem we conclude that $\Omega _Z$ and $\Omega _X$ are smooth varieties of pure dimension ($\Omega _Z$ is an irreducible variety) with  $dim(\Omega _Z)=n-k+1$ and $dim(\Omega _X)=n-2k$. 

Below we will recall the main known properties of these varieties and
fix some notations that we use later in the text (we use, for consistency, the same notations as in \cite{Le}, \cite{Le1}, \cite{BM}).

Let $\pi _X: P(X)\rightarrow X$ and $\pi _Z:P(Z)\rightarrow Z$ the projections from
the incidence varieties $P(X)=\{(c,x)\in \Omega _X\times X$ s.t. $x\in \mathbb{P}^k
_c\}$, $P(Z)=\{(c,x)\in \Omega _Z\times Z$ s.t. $ x\in \mathbb{P}^k _c\}$ to the
variety $X$, respectively $Z$. 

We denote by $\rho _Z:P(Z)\rightarrow \Omega _Z$ and
 $\rho _X:P(X)\rightarrow \Omega _X$ the natural projections. We have a cartesian
diagram
$$\begin{CD}
      X'=X\times _Z P(Z) @>j _2>> P(Z)\\
                     @V \pi VV         @VV \pi _Z V\\
       X@>j>> Z.\\
\end{CD}$$
If $k=1$, then $X'=Bl _{\Omega _X}(\Omega _Z)$, the blow-up of $\Omega _X\subset\Omega _Z$ (see \cite{Le1}). Denote $\rho=\rho _Z\circ j _2:X'\rightarrow \Omega _Z$. We also have a commutative
diagram 

$$\begin{CD}
      P(X) @>i>> X'\\
                     @V \rho _X VV         @VV \rho V\\
       \Omega _X @>j _0>> \Omega _Z\\
\end{CD}$$
with the embedding maps $j _0, i$. We have the following properties (\cite{Le1}, \cite{Le}):

1) $\pi$ and $\pi _Z$ are generically finite.

2) $\rho _X:P(X)\rightarrow \Omega _X$ and $\rho _Z:P(Z)\rightarrow \Omega _Z$ are
$\mathbb{P}^k$-bundles.

3) $X'$ is smooth by Bertini's theorem.

4) $\rho  _{Z/X}:=\rho _{| _{X' \setminus P(X)}}:X' \setminus P(X)\rightarrow \Omega _Z\setminus \Omega _X$ is a
$\mathbb{P}^{k-1}$-bundle.

5) $dim(X)=dim(X')=n$, $dim(Z)=dim(P(Z))=n+1$, $dim(P(X))=n-k$ and all varieties are
smooth. 

  Let $H _Z=\mathbb{P}^{n+1}\cap Z$ and $H _X=H _Z\cap X$ be generic hyperplane
sections of Z, respectively X. Then we denote $\mu=\pi^{-1}(H _X)$ and $\mu
_Z=\pi^{-1} _Z(H _Z)$ and we use the same notations for their algebraic cycle classes. 

The following definition introduces one of the objects of study in this paper. In the above notation, we have 
\begin{definition} 
\label{cyl}
 We call the map 
$$\pi _*\circ i _*\circ \rho _X^*:L _{r-k}H _{*+2(r-k)}(\Omega _X) \rightarrow L _rH _{*+2r}(X)$$
 the cylindrical homomorphism on  the Lawson homology groups of $X$. 
\end{definition}
For a motivation behind the Definition \ref{cyl} see \cite{HC}. For a related discussion, see the remarks after the proof of Theorem \ref{t4}.
\begin{remark} (\cite{HC})

 Cylindrical homomorphism (Definition \ref{cyl}) coincides with the action of the incidence correspondence in $\Omega _X\times X$ on the Lawson homology.  
\end{remark}

\begin{theorem}
\label{the1}
 With notations as above, there is a homotopy equivalence
$$Z _r(X')\simeq \oplus _{j=0}^{k-1}Z _{r-j}(\Omega _Z)\oplus Z _{r-k}(\Omega _X)$$
for any $r\geq 0$.
\end{theorem} 
\begin{proof}
We prove this theorem following the lines of the proof of (\cite{Le}, Proposition 3.2).  We will only focus on the parts of the proof that require an adaptation to our topological groups.

In Appendix,  Proposition \ref{Pu}, we prove that $$(\rho\times 1) _{*}\circ pr _1^*\mu^{k-1-l} \circ(\rho\times
1)^*:Z _r(\Omega _Z\times \mathbb{P}^m)\rightarrow Z _{r+l}(\Omega _Z \times
\mathbb{P}^m)$$
 is the zero map for any $0<l\leq k-1$ and the identity map for $l=0$. Here $pr _1: X'\times \mathbb{P}^m\rightarrow X'$ is the canonical projection map on $X'$ and $m\geq 0$.

For any $r,m\geq 0$, we have the following commutative diagram of fibration sequences:

\begin{equation*}
\begin{CD}
      Z _r(\Omega _Z\times \mathbb{P}^{m-1}) @>>> Z _r(\Omega _Z\times \mathbb{P}^m)  @>>>Z
_r(\Omega _Z\times \mathbb{A}^m)\\
                     @V{g}VV         @VV{g}V  @VV{g}V\\
      Z _{r+l}(\Omega _Z\times \mathbb{P}^{m-1})  @>>> Z _{r+l}(\Omega _Z\times \mathbb{P}^m) @>>>Z
_{r+l}(\Omega _Z\times \mathbb{A}^m)
\end{CD}
\end{equation*}

where $g=(\rho\times 1) _{*}\circ pr _1^*\mu^{k-1-l}\circ(\rho\times 1)^*$. This implies that the induced map
$$g=(\rho\times 1) _{*}\circ pr^* _1\mu^{k-1-l}\circ(\rho\times 1)^*:Z _r(\Omega
_Z\times \mathbb{A}^m)\rightarrow Z _{r+l}(\Omega _Z\times \mathbb{A}^m)$$
is zero for any $r,m \geq 0$ and  $0<l\leq k-1$ and it is identity for any $r,m \geq 0$ and
$l=0$. We conclude that for any $r\in \mathbb{Z}$, the map 
$$\rho _{*}\circ \mu^{k-1-l}\circ\rho^*: Z _r(\Omega _Z)\rightarrow Z _{r+l}(\Omega _Z)$$
is zero for $l>0$ and is identity for $l=0$.

 Let's consider the following homomorphisms:

$$\lambda _1=\oplus _{l=0}^{k-1} \rho _*\circ \mu^{k-1-l}:\pi _*Z _r(X')\rightarrow
\oplus _{l=0}^{k-1} \pi _*Z _{r-k+1+l}(\Omega _Z)$$   
and 
$$\lambda _2=\sum _{l=0}^{k-1}\mu^l\circ \rho^*:\oplus _{l=0}^{k-1}\pi _*Z
_{r-k+1+l}(\Omega _Z)\rightarrow \pi _*Z _r(X')$$
and $$T=\lambda _1\circ \lambda _2: \oplus _{l=0}^{k-1}\pi _*Z
_{r-k+1+l}(\Omega _Z)\rightarrow \oplus _{l=0}^{k-1}\pi _*Z
_{r-k+1+l}(\Omega _Z).$$ 

We can prove that $\lambda _1$ is surjective and $\lambda _2$ is injective. To see this consider
$\xi=(*,*,..,*)\in \oplus _{l=0}^{k-1} \pi _*Z _{r-k+1+l}(\Omega _Z)$. Then $T(\xi)-\xi=(*,*,..,*,0)$. This is because the last component of $T$ is 
$$\rho _*\circ(\sum _{l=0}^{k-1}\mu^l\circ\rho^*)=\sum _{l=0}^{k-1}\rho _*\circ\mu^l\circ\rho^*=\textrm{identity}$$
according to the above results.
It implies that 
$$(T-I)^k=T^k+a _{k-1}T^{k-1}+...+a _1T\pm I=0$$
 on $\oplus _{l=0}^{k-1} \pi _*Z _{r-k+1+l}(\Omega _Z)$. 

This shows that there is a polynomial $f\in\mathbb{Z}[X]$ such that $f(T)T=Tf(T)=\pm I$ on
$\oplus _{l=0}^{k-1} \pi _*Z _{r-k+1+l}(\Omega _Z)$. This implies that
$\lambda _1$ is surjective and $\lambda _2$ is injective. 

We have the following commutative diagram with exact rows:
\begin{equation*}
\xymatrix@-1.3pc{
 \ar[r]&\pi _{*+1}(Z _r(P(X))) \ar[r]^{i _*}\ar[d]^{\alpha}   &   \pi _{*+1}(Z _r(X')) \ar[r]\ar[d]^{(1)} _{\mp f(T)\circ \lambda _1}   &  \pi _{*+1}(Z _r(X' {\setminus P(X)}))  \ar[r]\ar[d]^{(2)} _{\beta\simeq}& \\ 
\ar[r]&   \oplus _{l=0}^{k-1} \pi _{*+1}Z _{r-k+1+l}(\Omega _X) \ar[r]  &  \oplus _{l=0}^{k-1} \pi _{*+1}Z _{r-k+1+l}(\Omega _Z)  \ar[r]  & \oplus _{l=0}^{k-1} \pi _{*+1}Z _{r-k+1+l}(\Omega _Z\setminus\Omega _X)\ar[r]& \\
}
\end{equation*}
\begin{equation}
\xymatrix@-1.2pc{
 \ar[r]& \pi _*(Z _r(P(X)))\ar[r]^{i _*}\ar[d] _{\alpha} ^{(3)} & \pi _*(Z _r(X'))\ar[r]\ar[d] _{\mp f(T)\circ \lambda _1} ^{(4)} &  \pi _*(Z _r(X'\setminus P(X)))\ar[r] \ar[d] _{\beta} ^{\simeq} &\\
\ar[r]& \oplus _{l=0}^{k-1} \pi _*Z _{r-k+1+l}(\Omega _X) \ar[r]  &  \oplus _{l=0}^{k-1} \pi _*Z _{r-k+1+l}(\Omega _Z)  \ar[r]  & \oplus _{l=0}^{k-1} \pi _*Z _{r-k+1+l}(\Omega _Z\setminus\Omega _X)\ar[r]& \\
}
\label{dia2}
\end{equation}

The exact rows are given by the long exact localization sequences  applied to the closed embeddings
$i:P(X)\hookrightarrow X'$ and $j _0:\Omega _X\hookrightarrow \Omega _Z$. 

The vertical rows are constructed from the maps given by the projective bundle theorem (Theorem \ref{projb})
applied to the $\mathbb{P}^k$-bundle $\rho _X :P(X)\rightarrow \Omega _X$ and to the $\mathbb{P}^{k-1}$-bundle $\rho _{Z/X}:  X'\setminus P(X)\rightarrow \Omega _Z\setminus \Omega _X$.

In Appendix, Proposition \ref{dchas}, a diagram chase in Diagram (\ref{dia2}) gives us the following isomorphism
\begin{equation}
\label{lis}
\lambda _2+i _*\circ \rho^* _X:\oplus _{l=0}^{k-1} \pi _*Z _{r-k+1+l}(\Omega
_Z)\oplus \pi _*Z _{r-k}(\Omega _X)\simeq\pi _*(Z _r(X'))
\end{equation}
for any $r\geq 0$. We notice that the map $\lambda _2+i _*\circ \rho^*
_X$ could be defined on the cycle spaces. We may rewrite the indices
$(j=k-1-l)$ such that $\oplus _{l=0}^{k-1} \pi _*Z _{r-k+1+l}(\Omega _Z)=\oplus
_{j=0}^{k-1} \pi _*Z _{r-j}(\Omega _Z)$.

Because the topological groups $Z _p(X)$ are C.W. complexes for any projective
variety $X$ \cite{F}, the isomorphism (\ref{lis}) gives us a homotopy equivalence
$$Z _r(X')\simeq \oplus _{j=0..k-1}Z _{r-j}(\Omega _Z)\oplus Z _{r-k}(\Omega _X)$$
for any $r\geq 0$.
\end{proof}
In particular, for $r=0$, the isomorphism (\ref{lis}) gives the decomposition of
the singular homology of $X'$ proven by J. Lewis in (\cite {Le}, Proposition 3.2).
\begin{corollary}(\cite{Le})
 There is an isomorphism 
 $$ \{\oplus _{l=0}^{k-1}H^{n-2l}(\Omega _Z,\mathbb{Z})\}\oplus H^{n-2k}(\Omega _X,\mathbb{Z})\simeq H^n(X',\mathbb{Z}).$$
\end{corollary}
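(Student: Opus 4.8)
The plan is to read this corollary directly off Theorem~\ref{the1}, or rather off the isomorphism (\ref{lis}), by specializing to $r=0$ and then translating the homotopy groups of the cycle spaces into singular (co)homology. All of the geometric content already resides in Theorem~\ref{the1}, so the corollary is purely a matter of identifying the relevant homotopy groups and tracking indices; I do not expect a serious conceptual obstacle, only careful bookkeeping.

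First I would put $r=0$ and take the homotopy degree $*=n$ in the isomorphism (\ref{lis}), which yields
$$\oplus _{l=0}^{k-1}\pi _n Z _{-k+1+l}(\Omega _Z)\oplus \pi _n Z _{-k}(\Omega _X)\simeq \pi _n(Z _0(X')).$$
Then I would identify each factor. The right-hand side is handled by the Dold--Thom theorem: since $X'$ is smooth projective of dimension $n$, we have $\pi _n(Z _0(X'))=L _0H _n(X')=H _n(X'^{an})$, and $H^{BM}_n=H_n$ for a compact variety. For the summands on the left the cycle spaces carry negative dimension indices, so the identification (\ref{qn}) applies: for $q\leq 0$ and $W$ smooth projective one has $\pi _n Z _q(W)=L _qH _{n+2q}(W)\simeq H _{n+2q}(W^{an})$. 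Concretely, the $\Omega _Z$-factor with $q=-k+1+l$ becomes $H _{n-2k+2+2l}(\Omega _Z^{an})$, and the $\Omega _X$-factor with $q=-k$ becomes $H _{n-2k}(\Omega _X^{an})$. Here I must check that $q\leq 0$ throughout, which holds because $0\leq l\leq k-1$ forces $-k+1+l\leq 0$, with equality (the $L_0$ case) at $l=k-1$.

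Finally I would pass from homology to cohomology by Poincar\'e duality on each of the three smooth projective varieties, using $\dim X'=n$, $\dim \Omega _Z=n-k+1$ and $\dim \Omega _X=n-2k$ from property (5) and the construction preceding it. Thus $H _n(X')\cong H^{n}(X')$; on $\Omega _Z$ one gets $H _{n-2k+2+2l}(\Omega _Z)\cong H^{2(n-k+1)-(n-2k+2+2l)}(\Omega _Z)=H^{n-2l}(\Omega _Z)$, so the index $l$ transfers without reindexing; and on $\Omega _X$ one gets $H _{n-2k}(\Omega _X)\cong H^{2(n-2k)-(n-2k)}(\Omega _X)=H^{n-2k}(\Omega _X)$. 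Assembling these identifications gives exactly
$$\{\oplus _{l=0}^{k-1}H^{n-2l}(\Omega _Z,\mathbb{Z})\}\oplus H^{n-2k}(\Omega _X,\mathbb{Z})\simeq H^{n}(X',\mathbb{Z}),$$
as claimed. The only delicate point is matching the homotopy degree $*=n$ to the target cohomological degree $n$ through Poincar\'e duality and verifying the degree shifts $n+2q$ land where claimed; beyond this index computation there is no substantive difficulty, since Theorem~\ref{the1} does all the work.
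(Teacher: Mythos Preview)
Your proposal is correct and follows the same route as the paper: the paper simply remarks that setting $r=0$ in the isomorphism (\ref{lis}) recovers Lewis's decomposition of $H^n(X',\mathbb{Z})$, and you have carefully written out the identifications (Dold--Thom, equation (\ref{qn}) for the negative indices, and Poincar\'e duality with the correct dimensions) that make this specialization explicit. Your index bookkeeping is accurate throughout.
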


Moreover the s-map respects the decomposition (\ref{lis}) because it commutes with the
maps involved in the above decomposition (see Proposition \ref{sneg} and the discussion after Proposition \ref{sneg}). This means that the cycle maps 
 $$cyc _{r,*+2r}=s^r: L _rH _{*+2r}(X')\rightarrow H _{*+2r}(X')$$
  respect the decomposition (\ref{lis}). This gives the next corollary.

\begin{corollary}
\label{c1}
For any $r\geq 0$, the isomorphism
$$\lambda _2+i _*\circ \rho^* _X:\oplus _{l=0}^{k-1} \pi _*Z _{r-k+1+l}(\Omega _Z)\oplus \pi _*Z _{r-k}(\Omega
_X)\simeq\pi _*(Z _r(X'))$$
 commutes with the s-map operation. In particular, the kernel and the cokernel of the cycle maps from Lawson homology of $X'$ to the singular
homology of $X'$ have the decompositions 
$$L^{hom} _rH _{*+2r}(X')\simeq \oplus _{l=0}^{k-1}L^{hom} _{r-k+l+1}H
_{*+2r-2k+2l+2}(\Omega _Z)\oplus L^{hom} _{r-k}H _{*+2r-2k}(\Omega _X).$$
and 
$$C _{r,*+2r}(X')\simeq \oplus _{l=0}^{k-1}C _{r-k+l+1,*+2r-2k+2l+2}(\Omega
_Z)\oplus C _{r-k,*+2r-2k}(\Omega _X).$$
\end{corollary}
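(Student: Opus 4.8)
The plan is to deduce both assertions from the single fact, recorded in the discussion after Proposition \ref{sneg}, that the $s$-map is natural with respect to push-forwards, flat pull-backs, Gysin maps, and intersection with a fixed cycle class. The isomorphism (\ref{lis}) is the map $\lambda_2 + i_*\circ\rho_X^*$, where $\lambda_2=\sum_{l=0}^{k-1}\mu^l\circ\rho^*$; each building block is one of the operations with which $s$ commutes ($\rho^*$ and $\rho_X^*$ are Gysin/flat pull-backs, $i_*$ is a push-forward, and $\mu^l$ is iterated intersection with the cycle class $\mu$). Hence $s$ commutes with every constituent, and therefore with the composite isomorphism (\ref{lis}) itself. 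Concretely, this yields for each $r$ a commuting square whose horizontal maps are the isomorphisms (\ref{lis}) for the indices $r$ and $r-1$ and whose vertical maps are $s$ acting summand by summand; since $s$ lowers the Lawson index by one, it preserves the summand structure and splicing these squares gives a commuting ladder.

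For the second assertion, recall that $cyc_{r,*+2r}(X')=s^r$, so by the ladder the cycle map of $X'$ is carried, through (\ref{lis}), to the direct sum of the maps $s^r$ acting on the summands. The key observation is how $s^r$ acts on a single summand. On $\pi_* Z_{r-j}(\Omega_Z)$, writing $j=k-1-l$, I would factor $s^r=s^{\,j}\circ s^{\,r-j}$, where $s^{\,r-j}=cyc_{r-j,\,*+2(r-j)}(\Omega_Z)$ is the genuine cycle map of $\Omega_Z$ and the remaining $s^{\,j}$ acts on non-positive Lawson homology; by Proposition \ref{sneg} together with (\ref{qn}), each of those remaining $s$-maps is an isomorphism. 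The same factorization on $\pi_* Z_{r-k}(\Omega_X)$ gives $s^r=s^{\,k}\circ cyc_{r-k,\,*+2(r-k)}(\Omega_X)$ with $s^k$ an isomorphism.

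Since post-composition with an isomorphism changes neither kernel nor cokernel, the kernel (respectively cokernel) of $s^r$ on the $\Omega_Z$-summand indexed by $l$ is $L^{hom}_{r-j}H_{*+2(r-j)}(\Omega_Z)$ (respectively $C_{r-j,*+2(r-j)}(\Omega_Z)$), and on the $\Omega_X$-summand it is $L^{hom}_{r-k}H_{*+2(r-k)}(\Omega_X)$ (respectively $C_{r-k,*+2(r-k)}(\Omega_X)$); here $\Omega_Z$ and $\Omega_X$ are projective, so the targets of these cycle maps are ordinary singular homology. Because kernel and cokernel commute with finite direct sums, taking kernels and cokernels of $cyc_{r,*+2r}(X')$ through the isomorphism (\ref{lis}) produces exactly the claimed decompositions after substituting $j=k-1-l$, so that $r-j=r-k+l+1$ and $*+2(r-j)=*+2r-2k+2l+2$. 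The one point demanding care is precisely this index bookkeeping: one must check that the uniform power $s^r$ built into $cyc_{r,*+2r}(X')$ restricts on each summand to that summand's own cycle map up to the invertible suspensions of Proposition \ref{sneg}, and that the shifted homological degrees match the displayed subscripts. I expect this re-indexing, rather than the commutation with $s$ (which is essentially formal given the cited naturality), to be the main thing to get right.
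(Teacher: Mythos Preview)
Your proposal is correct and follows essentially the same approach as the paper. The paper's argument is terse: $s$ commutes with each operation used to build $\lambda_2+i_*\circ\rho_X^*$, hence $cyc_{r,*+2r}=s^r$ respects the decomposition (\ref{lis}); you unpack this by making explicit the factorization $s^r=s^{\,j}\circ cyc_{r-j,*+2(r-j)}$ on each summand and invoking Proposition~\ref{sneg} to see that the residual $s^{\,j}$ is invertible, which is exactly the point the paper leaves implicit.
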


Corollary \ref{c1} will be essential in proving  certain properties of the cylindrical homomorphism on Lawson homology (see Theorem \ref{t3} and Theorem \ref{t4}).

\section{Some remarks on the Lawson homology groups of a generic hypersurface}
 In this section we extend to Lawson homology groups a weak Lefschetz result (\cite{Le1}, Proposition 2.1) proved on Chow groups and discuss some applications of the classical weak Lefschetz theorem on Lawson homology (Proposition \ref{p1}). The results in this section will be used as technical tools in the next section.
 
 Let $X\subset \mathbb{P}^{n+1}$ be a nonsingular
hypersurface given by a homogeneous polynomial $F(X _0, X _1,.., X _{n+1})$ of
degree $d$. Consider 
$$G(X _0, X _1,.., X _n, X _{n+1}, X _{n+2})=X^d _{n+2}+F(X _0, X _1,.., X _{n+1})$$ 
a homogeneous polynomial of degree $d$. Then $G$ defines a smooth hypersurface $Z\subset
\mathbb{P}^{n+2}$ with the property that $Z\cap \mathbb{P}^{n+1}=X$, where we
considered the embedding $\mathbb{P}^{n+1} =V(X _{n+2})\subset \mathbb{P}^{n+2}$.
Consider the map $v _p:\mathbb{P}^{n+2}\rightarrow \mathbb{P}^{n+1}$, the projection
from the point $p=[0,0,...,0,1]$ and the inclusions $k:
X\hookrightarrow\mathbb{P}^{n+1}$ and $j:X\hookrightarrow Z$. It is obvious that $v
_p\circ j=k$, i.e.
$$k:X\stackrel{j}{\hookrightarrow} Z\stackrel{v _p}{\rightarrow} \mathbb{P}^{n+1}.$$

Consider the graphs of these maps $W _j, W _k, W _{v _p}$ as correspondences in their
corresponding Chow groups. The graphs have
the property (see \cite{Le}) that
$$d ^tW _j= ^tW _k\circ W _ {v _p}\in CH _n(Z\times X)$$
where on the right side of the equality we have a composition of correspondences. We may view this equality in $A _n(Z\times X)$, the Chow group of algebraic cycles modulo algebraic equivalence.

The action of correspondences and its properties [as in (\cite{Ful}, Chapter 16)] can be word for word extended on Lawson homology because of the good properties this theory enjoys (see \cite{HULi}). 
\begin{theorem}
\label{the2}
Let $X\subset \mathbb{P}^{n+1}$ be a nonsingular hypersurface of degree $d$ and
$Z\subset \mathbb{P}^{n+2}$ be the associated smooth hypersurface defined  above. Let $m\geq 2p\geq 0$. Consider
 the Gysin map $$j^*: L _pH _m(Z)\rightarrow L _{p-1}H _{m-2}(X)$$ defined by the regular embedding $j:X\hookrightarrow Z$. Then $dj^*$ is zero on the kernel and on the cokernel of the cycle map
$L _pH _m(Z)\rightarrow H _m(Z^{an})$.
\end{theorem}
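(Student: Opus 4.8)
The plan is to translate the geometric correspondence identity $d\,{}^tW_j = {}^tW_k\circ W_{v_p}$ into a factorization of the Gysin map $d\,j^*$ through the Lawson homology of $\mathbb{P}^{n+1}$, and then to exploit the fact that for $\mathbb{P}^{n+1}$ the cycle map is an isomorphism. First I would set up the dictionary between the correspondences in the identity and the operations they induce on Lawson homology. Using that the action of correspondences extends to Lawson homology (as recalled just before the statement, via \cite{HULi}), the transpose graph ${}^tW_j$ acts as the Gysin map $j^*$, the transpose graph ${}^tW_k$ acts as the Gysin map $k^*$ of the codimension-one regular embedding $k:X\hookrightarrow\mathbb{P}^{n+1}$, and the graph $W_{v_p}$ acts as the proper push-forward $(v_p)_*$ along $v_p:Z\to\mathbb{P}^{n+1}$ (a morphism of projective varieties, since $p=[0,\dots,0,1]\notin Z$). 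As the induced action factors through algebraic equivalence of correspondences, the identity in $A_n(Z\times X)$ yields the equality of operators

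\begin{equation*}
d\,j^* = k^*\circ (v_p)_*\colon L_pH_m(Z)\longrightarrow L_{p-1}H_{m-2}(X),
\end{equation*}

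with $(v_p)_*\colon L_pH_m(Z)\to L_pH_m(\mathbb{P}^{n+1})$ and $k^*\colon L_pH_m(\mathbb{P}^{n+1})\to L_{p-1}H_{m-2}(X)$. The same identity holds verbatim with singular homology in place of Lawson homology.

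The second ingredient I would invoke is that the cycle map $cyc\colon L_pH_m(\mathbb{P}^{n+1})\to H_m((\mathbb{P}^{n+1})^{an})$ is an isomorphism for all $p,m$; this follows from the projective bundle theorem (Theorem \ref{projb}) applied to $\mathbb{P}^{n+1}$ over a point together with the computation of the Lawson homology of a point. With this in hand, both assertions reduce to diagram chases using the compatibility of $(v_p)_*$ and $k^*$ with the cycle maps (the $s$-map commutes with push-forwards and Gysin maps, as recalled after Proposition \ref{sneg}). For the kernel: if $x\in L^{hom}_pH_m(Z)$ then $cyc\big((v_p)_*x\big) = (v_p)_*(cyc\,x)=0$ in $H_m((\mathbb{P}^{n+1})^{an})$, and injectivity of the cycle map for $\mathbb{P}^{n+1}$ forces $(v_p)_*x=0$; hence $d\,j^*x = k^*\big((v_p)_*x\big)=0$. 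For the cokernel: compatibility with the cycle maps shows that $j^*$ descends to a map $C_{p,m}(Z)\to C_{p-1,m-2}(X)$, and applying the singular version of the identity to a class $\gamma\in H_m(Z^{an})$ gives $d\,j^*\gamma = k^*\big((v_p)_*\gamma\big)$; surjectivity of the cycle map for $\mathbb{P}^{n+1}$ lets me write $(v_p)_*\gamma = cyc(\delta)$, so that $k^*\big((v_p)_*\gamma\big)=cyc(k^*\delta)$ lies in the image of the cycle map for $X$ and therefore represents $0$ in $C_{p-1,m-2}(X)$.

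The main obstacle I anticipate is the careful identification of the operators induced by the correspondences: verifying that ${}^tW_j$ induces exactly the Gysin map $j^*$ (and likewise ${}^tW_k\mapsto k^*$, $W_{v_p}\mapsto (v_p)_*$) on Lawson homology, that this action is well defined modulo algebraic equivalence so that the identity in $A_n(Z\times X)$ may be used to produce an equality of operators, and that it is compatible with the cycle maps into singular homology. Once this bookkeeping is in place, the factorization through $\mathbb{P}^{n+1}$ and the triviality of its Lawson homology make the vanishing of $d\,j^*$ on the kernel and cokernel essentially formal.
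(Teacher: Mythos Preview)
Your proposal is correct and follows essentially the same approach as the paper: both factor $d\,j^*$ as $k^*\circ (v_p)_*$ via the correspondence identity $d\,{}^tW_j={}^tW_k\circ W_{v_p}$, then use that the cycle map for $\mathbb{P}^{n+1}$ is an isomorphism together with compatibility of $(v_p)_*$ and $k^*$ with the cycle maps. The paper packages the diagram chase into a single commutative square obtained by applying $s^p$, while you spell out the kernel and cokernel arguments separately, but the content is identical; the ``obstacle'' you flag (identifying the action of ${}^tW_j$ with $j^*$) is handled in the paper by appeal to the projection and base-change formulas for Gysin maps in \cite{P} and \cite{Ful}.
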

\begin{proof}
 Consider the maps $j,  k, v _p$ defined as above and the relation between the
correspondences given by the graphs of these maps: $d ^tW _j= ^tW _k\circ W _ {v
_p}$. The action of these correspondences on the Lawson homology groups gives the equality
$$dj^*=k^*v _{p*}.$$ The fact that the action of the correspondence $^tW _j$  has the same effect
as the Gysin map $j^*$ is a direct consequence of the projection formula and 
base change formula for Gysin maps proved in \cite{P} (for the formalism see \cite{Ful} Proposition 16.1.2 and Proposition 16.1.1 b),c)). 

Applying the s-map $p$ times gives the following commutative diagram:
 $$\begin{CD}
      L _pH _m(Z) @>v _{p*}>> L _pH _m(\mathbb{P}^{n+1}) @>k^*>> L _{p-1}H _{m-2}(X)\\
            @Vcyc _{p,m}VV            @Vcyc _{p,m}VV         @Vcyc _{p-1,m-2}VV\\
      H _m(Z^{an}) @>v _{p*}>> H _m(\mathbb{P}^{n+1}) @>k^*>> H _{m-2}(X^{an})\\
\end{CD}$$
for any $p, m\geq 0$, $m\geq 2p$. The middle cycle map of the above diagram is an isomorphism for
any $p, m\geq 0$ \cite{L}. The composition of the horizontal maps is given by the
Gysin map $dj^*$. It implies that $dj^*(L^{hom} _pH _m(Z))=0=dj^*(C _{p,m}(Z))$.
\end{proof}
\begin{corollary}
\label{c2}
Consider the Gysin map $j^*:H _m(Z^{an})\rightarrow H _{m-2}(X^{an})$ and fix $p>0$. Then for
any $x\notin Im(cyc _{p,m})$ we have $dj^*(x)\in Im(cyc _{p-1,m-2})$. 
\end{corollary}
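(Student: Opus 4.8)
The plan is to read the corollary off directly from the cokernel part of Theorem~\ref{the2}, unpacking the statement ``$dj^*$ vanishes on $C_{p,m}(Z)$'' at the level of individual homology classes. First I would recall the commutative diagram established in the proof of Theorem~\ref{the2}, in which the bottom composite realizes $dj^*$ on singular homology and the top composite realizes $dj^*$ on Lawson homology:
$$\begin{CD}
L_pH_m(Z) @>v_{p*}>> L_pH_m(\mathbb{P}^{n+1}) @>k^*>> L_{p-1}H_{m-2}(X)\\
@Vcyc_{p,m}VV @Vcyc_{p,m}VV @Vcyc_{p-1,m-2}VV\\
H_m(Z^{an}) @>v_{p*}>> H_m(\mathbb{P}^{n+1}) @>k^*>> H_{m-2}(X^{an}).
\end{CD}$$
The two crucial inputs are that this diagram commutes --- because the $s$-map, and hence each generalized cycle map $cyc$, commutes with push-forwards and with Gysin maps --- and that the middle vertical cycle map is an isomorphism by Lawson's theorem for $\mathbb{P}^{n+1}$.

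Next, given any $x\in H_m(Z^{an})$, and in particular any $x\notin Im(cyc_{p,m})$, I would push it forward to $v_{p*}(x)\in H_m(\mathbb{P}^{n+1})$ and use the isomorphism property of the middle cycle map to choose a Lawson class $\widetilde{w}\in L_pH_m(\mathbb{P}^{n+1})$ with $cyc_{p,m}(\widetilde{w})=v_{p*}(x)$. Applying $k^*$ together with the commutativity of the right-hand square then gives
$$dj^*(x)=k^*v_{p*}(x)=k^*\,cyc_{p,m}(\widetilde{w})=cyc_{p-1,m-2}(k^*\widetilde{w}),$$
which exhibits $dj^*(x)$ as an element of $Im(cyc_{p-1,m-2})$, exactly as claimed.

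There is essentially no obstacle here: the corollary is a pointwise reformulation of the cokernel identity $dj^*(C_{p,m}(Z))=0$ already obtained in Theorem~\ref{the2}. The only point worth flagging is the bookkeeping of indices --- one should observe that $cyc_{p-1,m-2}$ is defined precisely in the range $m\geq 2p$ forced by the hypotheses of Theorem~\ref{the2}, so that the target group and its factorization through $s$-maps are meaningful. The hypothesis $x\notin Im(cyc_{p,m})$ plays no logical role in the argument, since the conclusion holds for \emph{every} $x$; it merely isolates the nontrivial content, because for $x\in Im(cyc_{p,m})$ the inclusion $dj^*(x)\in Im(cyc_{p-1,m-2})$ is immediate from commutativity of the outer rectangle.
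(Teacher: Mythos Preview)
Your proof is correct and follows essentially the same approach as the paper: the paper's proof consists of the single sentence ``This is just a reformulation of the statement $dj^*(C_{p,m}(Z))=0$ proved above,'' and you have simply unpacked that reformulation by tracing through the commutative diagram from Theorem~\ref{the2}. Your observation that the hypothesis $x\notin Im(cyc_{p,m})$ plays no logical role is also accurate.
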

\begin{proof}
This is just a reformulation of the statement  $dj^*(C _{p,m}(Z))=0$ proved above.
\end{proof}
The classical weak Lefschetz theorem, together with the Poincare duality theorem \cite{Vo}, says that a smooth hypersurface
$X\subset \mathbb{P}^{n+1}$ has the following cohomology: 

-$H^k(X,\mathbb{Z})=0$, for k odd, $k\neq dim(X)$; 

-$H^{2k}(X,\mathbb{Z})\simeq \mathbb{Z}\alpha$ for $2k>dim(X)$. Here $\alpha$ is a class of a topological cycle with the intersection $<\alpha,h^{n-1-k}>=1$. By $h$ we denote the algebraic class of a hyperplane section of $X$. Moreover, there is a positive integer $a\in \mathbb{N}$ such that $a\alpha$ is an algebraic cycle. 

-$H^{2k}(X,\mathbb{Z})\simeq \mathbb{Z} h^k$ for $2k<dim(X)$, with  $h$ being the algebraic class of a hyperplane section of $X$.

If $X\subset\mathbb{P}^{n+1}$ is a smooth generic hypersurface of small degree $3\leq d\leq n+1$ that fulfills the condition $dim(\Omega _X(k))\geq 0$ or equivalently $(k+1)(n+1-k)-\dbinom{d+k}{k}\geq n-2k$, with $k=[\frac{n+1}{d}]$, then we have at least one k-plane included in $X$ \cite{Bor}. This implies that $H^{2n-2i}(X,\mathbb{Z})=\mathbb{Z}\alpha = \mathbb{Z}$ for any $0\leq i\leq k$,  because we may take the generator $\alpha$ to be an $i-$plane included in $X$. We remark that a generic rationally connected hypersurface has always a line included in it. 

We can conclude now the following proposition:

\begin{proposition}
\label{p1}
For any smooth generic hypersurface $X\subset \mathbb{P}^{n+1}$ of degree $3\leq d\leq n+1$, the
cycle maps
$$cyc _{p,m}:L _pH _m(X)\rightarrow H _m(X^{an})$$ are 

1) rationally surjective for any $m\neq dim(X)$.

2) surjective with integer coefficients for any $m\neq dim(X)$, m odd.

3) surjective with integer coefficients for any $m=2r>dim(X)$.

4) surjective with integer coefficients for any $0<m=2r\leq 2k$ if $dim(\Omega _X(k))\geq 0$.
\end{proposition}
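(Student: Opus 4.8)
The plan is to prove all four parts by combining the explicit structure of the singular homology of a smooth hypersurface (recalled just before the proposition) with the fact that the cycle maps $cyc_{p,m}$ are compositions of $s$-maps landing in singular homology, so that surjectivity amounts to realizing each generator of $H_m(X^{an})$ by an algebraic class that lifts to Lawson homology. The key observation throughout is that whenever a generator of $H_m(X^{an})$ is represented by an \emph{algebraic} cycle of the appropriate dimension, it automatically lies in the image of $cyc_{p,m}$, since an algebraic $r$-cycle gives a class in $\pi_0(Z_r(X)) = L_rH_{2r}(X)$ whose image under $s^r = cyc_{r,2r}$ is precisely its homology class.

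For the individual parts I would argue as follows. \textbf{Part 3} ($m = 2r > \dim X$): here weak Lefschetz and Poincar\'e duality give $H_{2r}(X^{an}) \simeq \mathbb{Z}h^{n-r}$, generated by the class of a linear section $h^{n-r}$, which is manifestly an algebraic $r$-cycle; hence it lifts to $L_rH_{2r}(X)$ and $cyc_{r,2r}$ is integrally surjective. \textbf{Part 4} ($0 < m = 2r \le 2k$ with $\dim\Omega_X(k)\ge 0$): by Borcea's theorem (Theorem \ref{t101}) the hypersurface contains a $k$-plane, hence contains $i$-planes for every $0 \le i \le k$; as noted in the text, for $2n-2i$ in the relevant range the generator $\alpha$ of $H^{2n-2i}(X,\mathbb{Z})\simeq\mathbb{Z}$ may be taken to be an $i$-plane, which is algebraic, and dualizing this says the generator of $H_{2r}(X^{an})$ is algebraic for $0 < r \le k$, giving the integral lift. \textbf{Part 2} ($m \ne \dim X$ odd): weak Lefschetz forces $H_m(X^{an}) = 0$ for odd $m \ne \dim X$, so the cycle map is trivially (and integrally) surjective.

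\textbf{Part 1} (rational surjectivity for all $m \ne \dim X$) is where the remaining cases concentrate, namely $m = 2r < \dim X$. Here $H_{2r}(X^{an})\simeq\mathbb{Z}h^{n-r}$ is generated by the linear-section class $h^{n-r}$, which is again algebraic, so in fact integral surjectivity should hold for every even $m \ne \dim X$ and rational surjectivity for all $m \ne \dim X$ follows at once by combining this with the odd case of Part 2. The main point to handle carefully is that in each case one must know the generator of $H_m(X^{an})$ is realized by an algebraic cycle \emph{of dimension exactly $r = m/2$} (so that it defines a class in $L_rH_{2r}(X)$ before applying $s$-maps), and that the Lawson-homology lift maps to it under the full composite of $s$-maps; this uses that $cyc_{r,2r}=s^r$ sends the connected-component class of an algebraic cycle to its ordinary homology class. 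The only genuine obstacle is the excluded middle dimension $m = \dim X$, where $H_m(X^{an})$ need not be generated by powers of $h$ and can carry primitive cohomology that is not generated by algebraic classes of the right dimension — which is exactly why $m \ne \dim X$ is hypothesized and why that case is left out of the statement.
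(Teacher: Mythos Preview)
Your overall strategy is correct and matches the paper's (implicit) argument: the proposition is a direct consequence of the explicit description of $H_*(X^{an})$ recalled just before it, combined with the fact that $cyc_{p,m}$ factors through $cyc_{r,2r}$ for $m=2r$. Parts 2, 3, 4 are handled correctly.

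There is, however, a genuine error in your treatment of Part~1 for $m=2r<\dim X=n$. You assert that $H_{2r}(X^{an})\simeq\mathbb{Z}h^{n-r}$ is \emph{generated} by the linear-section class. This is false. By weak Lefschetz the inclusion induces $H_{2r}(X)\stackrel{\sim}{\to}H_{2r}(\mathbb{P}^{n+1})=\mathbb{Z}[\mathbb{P}^r]$, and the linear section $X\cap\mathbb{P}^{r+1}$ (which is the class you call $h^{n-r}$) has degree $d$ in $\mathbb{P}^{r+1}$, hence maps to $d\,[\mathbb{P}^r]$. So the algebraic class is $d$ times the generator, not the generator itself. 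This is exactly what the paper records in the cohomological form just above the proposition: for $2k>\dim X$ one has $H^{2k}(X,\mathbb{Z})\simeq\mathbb{Z}\alpha$ with only a positive multiple $a\alpha$ algebraic. Consequently your side claim that ``integral surjectivity should hold for every even $m\neq\dim X$'' is not justified (and is in general false for a generic hypersurface of degree $d\ge 2$).

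The actual content of Part~1, namely \emph{rational} surjectivity, still follows from your argument once corrected: the algebraic class $h^{n-r}$ equals $d$ times the generator, so the image of $cyc_{r,2r}$ contains $d\cdot H_{2r}(X)$, which is all of $H_{2r}(X)_{\mathbb{Q}}$ after tensoring. This distinction between rational and integral surjectivity is precisely why the proposition separates Part~1 from Parts~3 and~4: for $2r>n$ the linear section \emph{is} the integral generator (via Poincar\'e duality, since then $H^{2(n-r)}(X)\simeq\mathbb{Z}h^{n-r}$ with $2(n-r)<n$), and for $2r\le 2k$ the existence of an $r$-plane $\mathbb{P}^r\subset X$ supplies a degree-one algebraic cycle that hits the generator integrally.
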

We notice that the first three points of the Proposition \ref{p1} apply to any smooth projective hypersurface. We will need later the following corollary:
\begin{corollary}
\label{coro2}
Let $X$ be a smooth generic projective cubic hypersurface of dimension $n=5,6$ or $8$. Then the cycle maps 
$$cyc _{p,m}:L _pH _m(X)\rightarrow H _m(X^{an})$$
are surjective for any $m\neq dim(X)$.
\end{corollary}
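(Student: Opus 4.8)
The plan is to deduce the corollary directly from Proposition \ref{p1} specialized to degree $d=3$, so that the only real content is an arithmetic check that the four surjectivity ranges of that proposition, taken together, exhaust every index $m\neq\dim(X)$ for the three dimensions $n=5,6,8$. For a smooth generic cubic $X\subset\mathbb{P}^{n+1}$ we have $k=[\frac{n+1}{3}]$, and since $d=3$ the hypersurface is never a quadric, while $d\leq n+1$ holds for all three values of $n$; thus the hypotheses of Theorem \ref{t101} and of part 4 of Proposition \ref{p1} are available as soon as $\dim(\Omega _X(k))\geq 0$. The odd indices $m\neq\dim(X)$ are covered by part 2 and the even indices $m=2r>\dim(X)$ by part 3, with no extra assumption. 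The index $m=0$ is trivial, since $cyc _{0,0}$ is the identity map $L _0H _0(X)=H _0(X^{an})\to H _0(X^{an})$, and for $m>2n$ the target $H _m(X^{an})$ vanishes so surjectivity is automatic. Hence the whole statement reduces to covering the \emph{small even} indices $0<m\leq 2k$ through part 4, which is exactly where the numerical input is needed.

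First I would record the relevant data in each case. For $n=5$ we get $k=[6/3]=2$ and $\dim(\Omega _X(k))=3\cdot 4-\dbinom{5}{2}=12-10=2$; for $n=6$ we get $k=[7/3]=2$ and $\dim(\Omega _X(k))=3\cdot 5-\dbinom{5}{2}=15-10=5$; for $n=8$ we get $k=[9/3]=3$ and $\dim(\Omega _X(k))=4\cdot 6-\dbinom{6}{3}=24-20=4$. In all three cases $\dim(\Omega _X(k))\geq 0$, so part 4 of Proposition \ref{p1} applies with the stated $k$ and yields integral surjectivity of $cyc _{p,m}$ for every even $m$ with $0<m\leq 2k$; that is, for $m\in\{2,4\}$ when $n=5,6$ (where $2k=4$) and for $m\in\{2,4,6\}$ when $n=8$ (where $2k=6$).

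It then remains to verify coverage. For $n=5$ (so $\dim(X)=5$) the relevant range is $0\leq m\leq 10$: the odd $m\neq 5$, namely $1,3,7,9$, come from part 2; the even $m>5$, namely $6,8,10$, come from part 3; the even $0<m\leq 4$, namely $2,4$, come from part 4; and $m=0$ is trivial. The cases $n=6$ and $n=8$ are identical in spirit, with the even band below the middle dimension being $\{2,4\}\subset(0,6)$ for $n=6$ and $\{2,4,6\}\subset(0,8)$ for $n=8$, each contained in $\{m:0<m\leq 2k\}$. The main (and only) obstacle is precisely this band of even indices $0<m<\dim(X)$: these are surjective \emph{only} via part 4, which requires both $\dim(\Omega _X(k))\geq 0$ and $m\leq 2k$. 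The three dimensions $5,6,8$ are exactly those small cubics for which $2k$ reaches the largest even integer below $\dim(X)$ ($2k=4=\dim(X)-1$ for $n=5$, $2k=4=\dim(X)-2$ for $n=6$, and $2k=6=\dim(X)-2$ for $n=8$), so that no even index in the interval $(0,\dim(X))$ is left uncovered; for a generic cubic of another dimension this band would generally leave a gap, which is what singles out the cases $n=5,6,8$.
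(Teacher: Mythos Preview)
Your proof is correct and follows essentially the same approach as the paper: both invoke Proposition \ref{p1} (together with Theorem \ref{t101} to ensure $\dim(\Omega_X(k))\geq 0$), and the argument reduces to the arithmetic verification that parts 2, 3, and 4 of that proposition cover every $m\neq\dim(X)$ for $n=5,6,8$. The paper's proof is terse where you spell out the computations of $k$ and $\dim(\Omega_X(k))$ and the case-by-case coverage, but the content is identical.
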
  
\begin{proof}
If $n=5$ or $6$ then $k=[\frac{n+1}{d}]=2$. Proposition \ref{p1} and Theorem \ref{t101} imply that all the cycle maps $cyc _{p,m}$, with $m\neq dim(X)$, are surjective. For a generic cubic eightfold, we have $k=3$ and again the conclusion follows from Propositions \ref{p1} and Theorem \ref{t101}. 
\end{proof}
\section{Applications}
 In this section we will show some applications of the results proved in the previous sections. Our main theorems are Theorem \ref{t3} and Theorem \ref{t4}.

We proved in Theorem \ref{the1} that we have the following isomorphism:
$$\lambda _2+i _*\circ \rho^* _X:\oplus _{l=0}^{k-1} \pi _*Z _{r-k+1+l}(\Omega
_Z)\oplus \pi _*Z _{r-k}(\Omega _X)\simeq\pi _*(Z _r(X')).$$
Because  $\pi:X'\rightarrow X$ is a generically finite map  we have
\begin{equation}
\label{fm}
\pi _*\circ\pi^*=(deg\pi)id:L _*H_*(X)\rightarrow L _*H _*(X)
\end{equation}
as a result  of the projection formula for Gysin maps proved by C. Peters in \cite{P} (see
also \cite{FL}). This implies that 
$\pi _*\tensor\mathbb{Q}:L _*H _*(X') _\mathbb{Q}\rightarrow L _*H _*(X) _\mathbb{Q}$ is a surjective map. From this and the corresponding in singular homology of the Equality (\ref{fm}), we can conclude that the  restriction 
$$\pi _*\tensor\mathbb{Q}:L _*^{hom}H _*(X') _\mathbb{Q}\rightarrow L _*^{hom}H _*(X) _\mathbb{Q}$$ is a surjective map.

Then the composition  $$\pi _*\circ(\lambda _2+i
_*\circ\rho^* _X)=\pi _*\circ\lambda _2+\pi _*\circ i _*\circ \rho _X^*:\oplus _{l=0}^{k-1} \pi _*Z _{r-k+1+l}(\Omega
_Z)\oplus \pi _*Z _{r-k}(\Omega _X)\rightarrow\pi _*Z _r(X) $$ is a rationally surjective map. Using Corollary \ref{c1}, we conclude that

\begin{equation}
\label{fer}
\pi _*\circ(\lambda _2+i _{*}\circ\rho^* _X):\oplus _{l=0}^{k-1}L^{hom}
_{r-k+l+1}H _{*+2r-2k+2l+2}(\Omega _Z) _\mathbb{Q}\oplus L^{hom} _{r-k}H
_{*+2r-2k}(\Omega _X) _\mathbb{Q}\rightarrow L^{hom} _rH _{*+2r}(X) _
\mathbb{Q}
\end{equation}
is a surjective map.

 The image of the map $\pi _*\circ\lambda _2:\oplus _{l=0}^{k-1} \pi _*Z _{r-k+1+l}(\Omega
_Z)\oplus \pi _*Z _{r-k}(\Omega _X)\rightarrow Z _r(X)$ is included in the image of $j^*:Z _{r+1}(Z)\rightarrow Z _r(X)$ because
\begin{equation}
\label{e1}
\pi _*\circ\lambda _2=\pi _*\circ j^* _2(\sum^{k-1} _{l=0}\mu^l _Z\circ \rho^* _Z)=
j^*\circ\pi _{Z*}\circ (\sum _{l=0}^{k-1}\mu^l _Z\circ\rho^* _Z)
\end{equation}
 using the base change in the cartesian diagram
$$\begin{CD}
       X' @>j _2>> P(Z)\\
                     @V\pi VV         @VV\pi _ZV\\
       X @>j>> Z.\\
\end{CD}$$
 

The following theorem is one of the main theorems of this section:
\begin{theorem}
\label{t3}
Let  $X\subset \mathbb{P}^{n+1}$ be a generic hypersurface of degree $3\leq d\leq n+1$ with $dim(\Omega _X(k))\geq n-2k$. Then, the restriction of the cylindrical homomorphism
$$\pi _*\circ i _*\circ \rho _X^*:L^{hom} _{r-k}H _{*+2(r-k)}(\Omega _X)
_\mathbb{Q}\rightarrow L^{hom} _rH _{*+2r}(X) _\mathbb{Q}$$
is surjective for any $r\geq 0$. In particular 
$$L^{hom} _rH _{*+2r}(X) _\mathbb{Q}=0$$
for any $r\leq k$ and any $r\geq n-k-1$. Moreover, we may fix a constant nonzero $N$
such that $NL^{hom} _rH _{*+2r}(X)=0$ for such $r$.
\end{theorem}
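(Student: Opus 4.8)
The plan is to feed the rational surjectivity (\ref{fer}) into the vanishing theorem for the Gysin map $j^*$ (Theorem \ref{the2}), thereby discarding the $\Omega _Z$-summands and isolating the cylindrical homomorphism; then to read off the two extreme ranges of $r$ from the geometry of $\Omega _X$; and finally to upgrade the rational vanishing to a uniformly bounded torsion statement. The whole argument rests on the decomposition already established in Corollary \ref{c1} together with the correspondence identity underlying Theorem \ref{the2}.

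First I would show that the summand $\pi _*\circ\lambda _2$ contributes nothing on homologically trivial classes. By (\ref{e1}) we have $\pi _*\circ\lambda _2=j^*\circ\pi _{Z*}\circ(\sum _{l=0}^{k-1}\mu _Z^l\circ\rho _Z^*)$, and since push-forwards, Gysin maps and intersection products all commute with the cycle maps, the composite $\pi _{Z*}\circ(\sum _l\mu _Z^l\circ\rho _Z^*)$ carries the homologically trivial part $\oplus _l L^{hom} _{r-k+l+1}H _{*+2r-2k+2l+2}(\Omega _Z) _\mathbb{Q}$ into $L^{hom} _{r+1}H _{*+2r+2}(Z) _\mathbb{Q}$. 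By Theorem \ref{the2}, $d\cdot j^*$ annihilates $L^{hom} _{r+1}H _{*+2r+2}(Z)$, so $j^*$ vanishes there rationally. Hence the $\Omega _Z$-part of the surjection (\ref{fer}) is rationally zero, which forces the cylindrical homomorphism $\pi _*\circ i _*\circ\rho _X^*\colon L^{hom} _{r-k}H _{*+2(r-k)}(\Omega _X) _\mathbb{Q}\to L^{hom} _rH _{*+2r}(X) _\mathbb{Q}$ to be surjective for every $r\geq 0$.

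Next I would identify the two ranges in which the source vanishes. Write $\delta=\dim\Omega _X=n-2k$, so the source carries cycle-index $p=r-k$. For $r\leq k$ we have $p\leq 0$: when $p<0$ the cycle map is an isomorphism by Proposition \ref{sneg} together with (\ref{qn}), and when $p=0$ it is the identity $L _0H _*=H _*$, so in either case $L^{hom} _{r-k}(\Omega _X)=0$. For $r\geq n-k-1$ we have $p\geq\delta-1$: if $p>\delta$ the cycle group $Z _p(\Omega _X)$ is zero; if $p=\delta$ the top-dimensional cycle map is an isomorphism; and if $p=\delta-1$ the codimension-one cycle map is injective in degree $2\delta-2$ (algebraic and homological equivalence agree for divisors, the N\'eron--Severi group injecting into $H^2$) and is an isomorphism in the higher range $m>2\delta-2$, so again $L^{hom} _{\delta-1}(\Omega _X)=0$. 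In every case the source of the surjection from the previous step is zero, giving $L^{hom} _rH _{*+2r}(X) _\mathbb{Q}=0$ for $r\leq k$ and for $r\geq n-k-1$; note that all of these vanishings of $L^{hom}(\Omega _X)$ in fact hold integrally.

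Finally, for the uniform torsion bound I would argue integrally. Fix $r$ in one of the two ranges and $\alpha\in L^{hom} _rH _{*+2r}(X)$. Since $\pi^*$ preserves homological triviality, $\pi^*\alpha\in L^{hom} _rH _{*+2r}(X')$, and by Corollary \ref{c1} this group decomposes integrally; as its $\Omega _X$-summand $L^{hom} _{r-k}(\Omega _X)$ vanishes, $\pi^*\alpha$ lies in the image of $\lambda _2$ applied to homologically trivial $\Omega _Z$-classes. Using the projection formula (\ref{fm}) and (\ref{e1}) gives $(\deg\pi)\,\alpha=\pi _*\pi^*\alpha=j^*\circ\pi _{Z*}(\cdots)$, which is $j^*$ of a homologically trivial class of $Z$; Theorem \ref{the2} then forces $d\cdot(\deg\pi)\,\alpha=0$, so $N:=d\deg\pi$ annihilates $L^{hom} _rH _{*+2r}(X)$ throughout both ranges. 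The main obstacle is the first step: one must verify that $\pi _*\circ\lambda _2$ genuinely factors through $\mathrm{im}(j^*)$ and then exploit the correspondence identity behind Theorem \ref{the2} to kill it, and for the upper range one additionally relies on the known behaviour of the codimension-one cycle map for $\Omega _X$.
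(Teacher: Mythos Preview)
Your proof is correct and follows the same strategy as the paper: you use (\ref{e1}) to route $\pi_*\circ\lambda_2$ through $j^*$, invoke Theorem \ref{the2} to kill this piece rationally, obtain surjectivity of the cylindrical homomorphism on $L^{hom}$, and then analyze the source for $r\leq k$ and $r\geq n-k-1$ exactly as the paper does (using the negative range via Proposition \ref{sneg}/(\ref{qn}) and the codimension--$\leq 1$ range via the known injectivity/isomorphism of the cycle map, which the paper attributes to \cite{F}). Your integral torsion argument, exploiting the integral decomposition of Corollary \ref{c1} and the vanishing of the $\Omega_X$-summand to write $(\deg\pi)\alpha$ as $j^*$ of a homologically trivial class on $Z$, is in fact more explicit than the paper's, which simply asserts that $N=d\,\deg\pi$ works.
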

\begin{proof} According to (\ref{fer})
$$\pi _*\circ(\lambda _2+i _{*}\circ\rho^* _X):\oplus _{l=0}^{k-1}L^{hom}
_{r-k+l+1}H _{*+2r-2k+2l+2}(\Omega _Z) _\mathbb{Q}\oplus L^{hom} _{r-k}H
_{*+2r-2k}(\Omega _X) _\mathbb{Q}\rightarrow L^{hom} _rH _{*+2r}(X) _
\mathbb{Q}$$
is a surjective map. From the Equality (\ref{e1}) we conclude that the composition
$$L^{hom} _{r-k}H _{*+2r-2k}(\Omega _X) _\mathbb{Q}\stackrel{\pi _*\circ i _*\circ \rho^* _X}{\rightarrow} L^{hom} _rH _{*+2r}(X)
_\mathbb{Q}\rightarrow L^{hom} _rH _{*+2r}(X) _\mathbb{Q}/Im(j^*) _\mathbb{Q}$$
is a surjective map. But Theorem \ref{the2} shows that $dj^*$ is the zero map on the
restriction to the kernel of $cyc _{q,n}$ for any $q, n\geq 0$. It implies that 
$$\pi _*\circ i _*\circ\rho^* _X :L^{hom} _{r-k}H
_{*+2(r-k)}(\Omega _X) _\mathbb{Q}\rightarrow L^{hom} _rH _{*+2r}(X) _\mathbb{Q}$$
is a surjective map.

Because $\Omega _X$ is a smooth variety of pure dimension $n-2k$ we know that $$L
_{n-2k}H _{2(n-2k)}(\Omega _X)\simeq H _{2(n-2k)}(\Omega _X)\simeq\mathbb{Z}^c$$
where c is the number of connected components of $\Omega _X$. We also know that the
cycle maps 
$$ cyc _{n-2k-1,q}:L _{n-2k-1}H _q(\Omega _X)\rightarrow H _q(\Omega _X)$$
are isomorphisms for $q\geq 2(n-2k)-1$ and monomorphisms for $q=2(n-2k-1)$ (\cite{F}). This
implies that $L^{hom} _{r-k}H _{*+2(r-k)}(\Omega _X) _\mathbb{Q}=0$ for $r\leq k$
and $r\geq n-k-1$. Because $\pi _*\circ i _*\circ\rho^* _X$ is a surjective map  we
conclude that $L^{hom} _rH _{*+2r}(X) _\mathbb{Q}=0$ for any $r\leq k$ or  $r\geq
n-k-1$.

We proved in Theorem \ref{the2} that $dj^*=0$ on $L^{hom} _*H _*(Z)$. We can see
that the constant $N=d(deg\pi)$ has the property that
$NL^{hom} _rH _{*+2r}(X)=0$ for any $r\leq k$ or any $r\geq n-k-1$.
\end{proof}
Theorem \ref{t3}, together with Proposition \ref{p1}, gives the following corollary:
\begin{corollary}
\label{coro12}
For any  smooth generic hypersurface $X\subset \mathbb{P}^{n+1}$ of degree $3\leq d\leq n+1$ with
$dim(\Omega _X(k))\geq n-2k$ we have 
$$L _rH _*(X) _\mathbb{Q}\simeq H _*(X^{an}) _\mathbb{Q}$$
for any $r\leq k$ and $r\geq n-k-1$ and $*\neq dim(X)=n$
\end{corollary}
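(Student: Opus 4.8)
The plan is to combine Theorem \ref{t3} with the surjectivity statements of Proposition \ref{p1}, reducing the rational isomorphism $L _rH _*(X) _\mathbb{Q}\simeq H _*(X^{an}) _\mathbb{Q}$ to the vanishing of the rational kernel $L^{hom} _rH _*(X) _\mathbb{Q}$ in the stated range. Recall the definitions $L^{hom} _pH _q(X)=\mathrm{Ker}(cyc _{p,q})$ and $C _{p,q}(X)=\mathrm{Coker}(cyc _{p,q})$ from the excerpt. Showing an isomorphism amounts to showing that both this kernel and the cokernel of the generalized cycle map $cyc _{r,m}:L _rH _m(X) _\mathbb{Q}\rightarrow H _m(X^{an}) _\mathbb{Q}$ vanish.

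First I would dispose of the cokernel. By hypothesis $X$ is a smooth generic hypersurface of degree $3\leq d\leq n+1$, and the condition $dim(\Omega _X(k))\geq n-2k$ is exactly the condition (\ref{eq11}) appearing in Proposition \ref{p1}(4). Hence Proposition \ref{p1} applies: its parts (1)--(3) give that $cyc _{r,m}$ is rationally surjective for every $m\neq dim(X)=n$, regardless of the value of $r$. Since we restrict to $*\neq n$ throughout, this yields $C _{r,*}(X) _\mathbb{Q}=0$ in the entire range under consideration, so $cyc _{r,*}\tensor\mathbb{Q}$ is surjective.

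Next I would handle the kernel using Theorem \ref{t3}. That theorem asserts precisely that, under the same hypotheses on $X$, one has $L^{hom} _rH _{*+2r}(X) _\mathbb{Q}=0$ for every $r\leq k$ and every $r\geq n-k-1$. Reindexing $*+2r$ as the homological degree, this says $L^{hom} _rH _m(X) _\mathbb{Q}=0$ for $r\leq k$ or $r\geq n-k-1$, which is exactly the range of $r$ in the corollary. Therefore $cyc _{r,m}\tensor\mathbb{Q}$ is injective for these $r$. Combining injectivity (from Theorem \ref{t3}) with surjectivity (from Proposition \ref{p1}) gives that $cyc _{r,m}:L _rH _m(X) _\mathbb{Q}\rightarrow H _m(X^{an}) _\mathbb{Q}$ is an isomorphism for all $r\leq k$ or $r\geq n-k-1$ and all $m\neq n$, which is the claim.

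The argument is essentially a formal combination of the two preceding results, so there is no genuine obstacle left inside the corollary itself; the real content has been absorbed into Theorem \ref{t3}. The one point deserving a line of care is the bookkeeping of indices: one must check that the restriction $*\neq dim(X)$ in the corollary is compatible with the degree $m=*+2r$ excluded in Proposition \ref{p1} (namely $m\neq n$), and that the vanishing range $r\leq k$, $r\geq n-k-1$ of Theorem \ref{t3} matches the range quoted here. Once these indices are aligned, the isomorphism follows immediately from $\mathrm{Ker}(cyc _{r,m}) _\mathbb{Q}=0$ and $\mathrm{Coker}(cyc _{r,m}) _\mathbb{Q}=0$.
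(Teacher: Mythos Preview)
Your proposal is correct and matches the paper's approach exactly: the paper simply states that the corollary follows from Theorem \ref{t3} together with Proposition \ref{p1}, and you have spelled out precisely that combination (rational surjectivity of $cyc_{r,*}$ for $*\neq n$ from Proposition \ref{p1}(1), rational injectivity from the vanishing $L^{hom}_rH_*(X)_\mathbb{Q}=0$ in Theorem \ref{t3}). One tiny inaccuracy: the condition in Proposition \ref{p1}(4) is $\dim(\Omega_X(k))\geq 0$, not $\geq n-2k$; but since you only invoke part (1) for rational surjectivity this does not affect the argument.
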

The following theorem studies the surjectivity of the rational generalized cycle maps of $X$ into the middle dimension homology.
\begin{theorem}
\label{t4}
Let $X\subset \mathbb{P}^{n+1}$ be a smooth hypersurface of degree $3\leq d\leq n+1$ with $dim(\Omega
_X(k))\geq n-2k$. Then 
$$cyc _{r,n}\tensor \mathbb{Q}:L _rH _n(X) _\mathbb{Q}\simeq H _n(X^{an}) _\mathbb{Q}$$
for any $r\leq k$ or $r\geq n-k$. Moreover, we can fix a nonzero constant $N$ such that $NC _{r,n}(X)=0$ for such $r$.
\end{theorem}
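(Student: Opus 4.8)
The plan is to treat the kernel and the cokernel of $cyc_{r,n}\tensor\mathbb{Q}$ separately, and to prove surjectivity in the middle degree $n$ by feeding the singular-homology decomposition of $X'$ back through the cycle maps. For the kernel there is nothing new to do: since $L^{hom}_rH_n(X)=\ker(cyc_{r,n})$, the vanishing $L^{hom}_rH_{*+2r}(X)_\mathbb{Q}=0$ established in Theorem \ref{t3} (valid for $r\leq k$ and for $r\geq n-k-1$, hence in particular for $r\geq n-k$) already makes $cyc_{r,n}\tensor\mathbb{Q}$ injective. Thus the whole content lies in proving rational surjectivity, i.e. $C_{r,n}(X)_\mathbb{Q}=0$, and then upgrading this to $NC_{r,n}(X)=0$.

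For surjectivity I would first record the singular analogue of the decomposition. The $H^n$-decomposition of $X'$ recalled after Theorem \ref{the1}, together with the surjectivity of $\pi_*\tensor\mathbb{Q}$ coming from $\pi_*\circ\pi^*=(\deg\pi)\,\mathrm{id}$, shows that every class in $H_n(X^{an})_\mathbb{Q}$ is the sum of a class in the image of $\pi_*\circ\lambda_2$ and a class in the image of the singular cylindrical homomorphism $\pi_*\circ i_*\circ\rho_X^*$. By (\ref{e1}) the first summand factors through the Gysin map $j^*:H_{n+2}(Z^{an})_\mathbb{Q}\to H_n(X^{an})_\mathbb{Q}$. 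Hence it suffices to show that both $\mathrm{Im}(j^*)$ and the image of the singular cylindrical homomorphism sit inside $\mathrm{Im}(cyc_{r,n})$.

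These two inclusions come from two naturality squares for the cycle map, which commute because the $s$-map commutes with $\pi_*$, $i_*$, $\rho_X^*$ and $j^*$. For the cylindrical part I would use the square built from $\pi_*\circ i_*\circ\rho_X^*$: since $r\leq k$ forces $r-k\leq 0$, the left vertical arrow $cyc_{r-k,n-2k}\colon L_{r-k}H_{n-2k}(\Omega_X)_\mathbb{Q}\to H_{n-2k}(\Omega_X)_\mathbb{Q}$ is an isomorphism by (\ref{qn}) and Proposition \ref{sneg} (it is the identity when $r=k$), so commutativity places the entire image of the singular cylindrical homomorphism inside $\mathrm{Im}(cyc_{r,n})$. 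For the hyperplane part I would use the naturality square of $j^*$, whose left vertical arrow is $cyc_{r+1,n+2}$ on $Z$; since $Z\subset\mathbb{P}^{n+2}$ is a smooth hypersurface of dimension $n+1$ and $n+2\neq n+1$, Proposition \ref{p1} makes this cycle map rationally surjective, whence $\mathrm{Im}(j^*)\subset\mathrm{Im}(cyc_{r,n})$. Combining the two inclusions with the decomposition of $H_n(X^{an})_\mathbb{Q}$ yields surjectivity, hence the isomorphism.

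For the integral refinement I would run the same two squares over $\mathbb{Z}$, using that (\ref{qn}) is an integral isomorphism when $r-k<0$ and that $dj^*$ kills $C_{p,m}(Z)$ by Theorem \ref{the2}; tracking the two scalars that appear — the factor $\deg\pi$ from $\pi_*\circ\pi^*$ and the factor $d$ from the correspondence identity $d\,{}^tW_j={}^tW_k\circ W_{v_p}$ — shows that $N=d(\deg\pi)$ annihilates $C_{r,n}(X)$, exactly as in Theorem \ref{t3}. I expect the genuine obstacle to be the surjectivity step in the middle degree $n$, where Proposition \ref{p1} is unavailable for $X$ itself; the device that removes it is to never demand surjectivity of a cycle map of $X$ in degree $n$, but only of cycle maps on the auxiliary varieties $\Omega_X$ (in a nonpositive Lawson degree, where it is automatic) and on the hypersurface $Z$ (in the non-middle degree $n+2$, covered by Proposition \ref{p1}). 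The range $r\geq n-k$ is then either vacuous when $2r\leq n$ and $n>2k$, or reduces to the case $\dim\Omega_X=0$, and is handled by the same argument.
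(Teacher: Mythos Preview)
Your proof is correct and tracks the paper's argument closely: both decompose through $X'$ via Theorem \ref{the1}/Corollary \ref{c1}, push forward by $\pi_*$, and use (\ref{e1}) to route the $\Omega_Z$-contribution through $j^*$. The one substantive difference is how you dispose of the $j^*$-image in the rational step: the paper applies Corollary \ref{c2} (i.e.\ $dj^*$ vanishes on cokernels, coming from Theorem \ref{the2}) to show the cylindrical map alone surjects onto $C_{r,n}(X)_\mathbb{Q}$, whereas you instead apply Proposition \ref{p1} to the smooth hypersurface $Z$ in degree $n+2\neq n+1=\dim Z$ to lift the entire image of $j^*$ through $cyc_{r+1,n+2}$. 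Both are valid; the paper's route is slightly more uniform, since it uses Theorem \ref{the2} for the rational and the integral statements alike, while you revert to Theorem \ref{the2} only at the end to extract the integral constant $N=d(\deg\pi)$. Your explicit appeal to Theorem \ref{t3} for injectivity is a clarification the paper's proof leaves implicit, and your analysis of the range $r\geq n-k$ (vacuous unless $n=2k$, in which case it collapses to $r=k$) is correct.
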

\begin{proof}
 From Corollary \ref{c1}, we have that 
$$C _{r,*+2r}(X')\simeq \oplus _{l=0}^{k-1}C _{r-k+l+1,*+2r-2k+2l+2}(\Omega
_Z)\oplus C _{r-k,*+2r-2k}(\Omega _X)$$
As above, the Equality (\ref{e1}) gives that the induced map
$$C _{r-k,*+2r-2k}(\Omega _X) _{\mathbb{Q}}\stackrel{\pi _*\circ i _*\circ\rho _X^*}{\rightarrow} C _{r,*+2r}(X)
_{\mathbb{Q}}\rightarrow C _{r,*+2r}(X) _{\mathbb{Q}}/Im(j^*)\tensor \mathbb{Q}$$
is a surjective map. But Corollary \ref{c2} shows that $dj^*=0$ is the zero map on the cokernel of the cycle map $cyc _{q,n}$. This implies that the map
$$C _{r-k,*+2r-2k}(\Omega _X) _{\mathbb{Q}}\stackrel{\pi _*\circ i _*\circ\rho _X^*}{\rightarrow} C _{r,*+2r}(X) _{\mathbb{Q}}$$
is a surjective map. Let $*=n-2r\geq 0$. Then we get the surjection $C _{r-k,n-2k}(\Omega
_X) _{\mathbb{Q}}\rightarrow C _{r,n}(X) _{\mathbb{Q}}$. Because for $r\leq k$ or
$r\geq n-k$ we have $C _{r-k,n-2k}(\Omega _X) _\mathbb{Q}=0$, we also have $C
_{r,n}(X) _\mathbb{Q}=0$. As in Theorem (\ref{t3}), $N=d(deg\pi)$ has the property that $NC _{r,n}(X)=0$ for any $r\leq k$ or $r\geq n-k$.
\end{proof}
 
We sketch now another way to prove that the cycle maps $cyc _{k,n}:L _kH _n(X) _\mathbb{Q}\rightarrow
H _n(X^{an}) _\mathbb{Q}$ are surjective maps for any $X\subset \mathbb{P}^{n+1}$ generic hypersurface of degree $3\leq d\leq n+1$ and
with the property that $dim(\Omega _X(k))\geq n-2k$. We can notice below that the cylindrical homomorphism on homology appears in a very natural way in the context of Lawson homology. We will not insist on all the details of this remark as this alternative proof is not important for the results of this paper.

 We may identify $\Omega
_X(k)$ with $\mathcal{C} _{k,1}(X)$, the Chow variety of $k-$ dimensional subvarieties of
degree 1. We can see $P(X)\subset \mathcal{C} _{k,1}(X)\times X$ as an algebraic
cycle of dimension $n-k$. Its action gives a map 
$$ \phi _*:L _{r-k}H _{*+2(r-k)}(\Omega _X(k))\rightarrow L _rH _{*+2r}(X).$$
 This map can also be seen as being the action given by the Chow correspondence $id:
\mathcal{C} _{k,1}(X)\rightarrow  \mathcal{C} _k(X)$ (see \cite{Fr}). This means that
the cylindrical homomorphism $\phi _*$ on homology (or $r=0$ above) is given by the following composition:
\begin{equation}
\label{e2}
H _{n-2k}(C _{k,1}(X))\rightarrow \oplus _d H _{n-2k}(C _{k,d}(X))\rightarrow H
_{n-2k}(Z _k(X))\stackrel{r}{\rightarrow} \pi _{n-2k}(Z _k(X))\stackrel{cyc _{k,n}}{\rightarrow} H _n(X^{an}).
\end{equation}
The map $r$ is a section of the Hurewitz map of the topological group $Z _k(X)$.
Moreover 
$$\begin{CD}
      H _{n-2k}(\Omega _X) _\mathbb{Q} @>i _*  >> H _{n-2k}(\Omega _X(k))
_\mathbb{Q} \\
                     @V\pi _{X*}\circ \phi^* _X VV         @VV\phi _*V\\
      H _n(X^{an}) _\mathbb{Q}  @>=>> H _n(X^{an}) _\mathbb{Q}\\
\end{CD}$$
is a commutative diagram with $i _*$ surjective. We see that if $\pi _{X*}\circ
\phi^* _X$ is surjective then $\phi _*$ is surjective. From the composition
(\ref{e2}) we see that if $\phi _*$ is surjective then the cycle map
   $$cyc _{k,n}:L _kH _n(X) _ \mathbb{Q}=\pi _{n-2k}(Z _k(X)) _\mathbb{Q}\rightarrow H _n(X) _\mathbb{Q}$$ 
is a surjective map. But $\pi _{X*}\circ \phi^* _X$  is surjective for any generic
smooth hypersurface $X$ with the property that $dim(\Omega _X(k))\geq n-2k$ (\cite{Le}).

\begin{remark}
The above composition of maps (\ref{e2}) points toward a possible strategy to prove the Generalized Hodge conjecture for any generic hypersurface $X\subset \mathbb{P}^{n+1}$ of degree $d\leq n+1$, not only just for those fulfilling the condition  $dim(\Omega _X(k))\geq n-2k$. We see that it would be enough, for any generic hypersurface $X\subset \mathbb{P}^{n+1}$ of degree $d\leq n+1$, to prove the much weaker condition (weaker than the condition $\phi _*$ surjective map): the map
$$\oplus _d H _{n-2k}(C _{k,d}(X))\rightarrow H _n(X^{an})$$
is a surjective map.
\end{remark}
 We may apply the above results to study semi-topological invariants of generic  cubic hypersurfaces. 
\begin{corollary}
\label{qc}
Let $X$ be a smooth generic cubic fivefold. Then:

1)$L _1H _2(X)\stackrel{\mathbb{Q}}{\simeq} H _2(X^{an})\simeq\mathbb{Z}$.

2)$L _1H _3(X)\stackrel{\mathbb{Q}}{\simeq}H _3(X^{an})=0$.

3)$L _iH _4(X)\stackrel{\mathbb{Q}}{\simeq} H _4(X^{an})\simeq\mathbb{Z}$ \textnormal{for any}\hspace{1mm} $0\leq i\leq 2$.

4)$L _2H _5(X)\stackrel{\mathbb{Q}}{\simeq} L _1H _5(X){\simeq} H _5(X^{an})$.

5)$L _iH _6(X)\simeq H _6(X^{an})\simeq\mathbb{Z}$ \textnormal{for any}\hspace{1mm} $0\leq i\leq 3$.

6)$L _iH _7(X)\simeq H _7(X^{an})=0$ \textnormal{for any}\hspace{1mm} $0\leq i\leq 3$.

7)$L _iH _8(X)\simeq H _8(X^{an})\simeq\mathbb{Z}$ \textnormal{for any}\hspace{1mm} $0\leq i\leq 4$.

8)$L _iH _9(X)\simeq H _9(X^{an})=0$ \textnormal{for any}\hspace{1mm} $0\leq i\leq 4$.

9)$L _iH _{10}(X)\simeq H _{10}(X^{an})\simeq\mathbb{Z}$ \textnormal{for any}\hspace{1mm} $0\leq i\leq 5$.

10)$L _pH _n(X)=0$ for any $n>2dim(X)=10$.

In particular, any generic smooth cubic fivefold fulfills Suslin's conjecture. 
\end{corollary}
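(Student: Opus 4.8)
The plan is to read off $H_*(X^{an})$ from weak Lefschetz and then determine each Lawson group by feeding the rational isomorphisms of Section 4 into the finite-coefficient information of Proposition \ref{propo}. First I would fix the numerics: for a cubic fivefold $n=5$, $d=3$, $\dim X=5$, and $k=[\frac{n+1}{d}]=2$, so that $\dim\Omega_X(2)=(k+1)(n+1-k)-\binom{d+k}{k}=2\geq n-2k=1$. Hence Theorem \ref{t101} applies and $X$ meets the hypotheses of Corollary \ref{coro12} and Theorem \ref{t4}. The weak Lefschetz description recalled before Proposition \ref{p1}, together with Poincar\'e duality, gives $H_m(X^{an})=\mathbb{Z}$ for every even $0\leq m\leq 10$, $H_m(X^{an})=0$ for $m$ odd with $m\neq 5$, the torsion-free middle group $H_5(X^{an})$, and $H_m(X^{an})=0$ for $m>10$.

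The parts with $m\leq 5$ I would handle as follows. For $m=2,3,4$ the engine is Corollary \ref{coro12}, which here holds for every $r\geq 0$ (since $k=2$ forces $r\leq k$ or $r\geq n-k-1=2$ to exhaust all $r$); it gives $L_rH_m(X)_{\mathbb{Q}}\simeq H_m(X^{an})_{\mathbb{Q}}$ and so the rational statements 1--3 at once. The middle-dimensional part 4 is handled by Theorem \ref{t4}, which makes $cyc_{2,5}\otimes\mathbb{Q}$ and $cyc_{1,5}\otimes\mathbb{Q}$ isomorphisms, whence $L_2H_5(X)\stackrel{\mathbb{Q}}{\simeq}L_1H_5(X)$; moreover, since $5\geq \dim X+1-1=5$, Proposition \ref{propo}(a) forces $L^{hom}_1H_5(X)$ divisible (hence zero by the bounded-exponent bound $N\cdot L^{hom}_1H_5(X)=0$ of Theorem \ref{t3}) and $C_{1,5}(X)$ torsion-free (hence zero, being rationally zero), so $cyc_{1,5}$ is already an integral isomorphism $L_1H_5(X)\simeq H_5(X^{an})$.

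For the range $m>5$ (parts 5--10) I would upgrade the rational isomorphisms to integral ones by killing both kernel and cokernel of $cyc_{r,m}$: Theorem \ref{t3} annihilates $L^{hom}_rH_m(X)$ by the fixed integer $N$, while Proposition \ref{propo}(a) makes it divisible whenever $m\geq \dim X+r-1=4+r$, and a divisible group of bounded exponent is zero; simultaneously Proposition \ref{propo}(a) makes $C_{r,m}(X)$ torsion-free and Corollary \ref{coro12} makes it rationally zero, hence zero (integral surjectivity is also available from Proposition \ref{p1}). This yields $L_rH_m(X)\simeq H_m(X^{an})$ in every case of parts 5--9 with $m\geq 4+r$; for $m=10=2\dim X$ one may instead quote the torsion-freeness in Proposition \ref{propo}(b). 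Part 10 then follows cleanly: Corollary \ref{coro12} makes $L_pH_m(X)$ torsion for $m>10$ while Proposition \ref{propo}(b) makes it uniquely divisible, hence torsion-free, so it vanishes. Finally, Suslin's conjecture for $X$ is read off directly, since its predicted isomorphism and monomorphism ranges $m\geq \dim X+q$ and $m\geq \dim X+q-1$ lie inside the verified ranges; in particular the delicate case $m=5$, $q=1$ is covered by the integral injectivity of $cyc_{1,5}$ established above.

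The hard part will be the one boundary case $L_3H_6(X)=A_3(X)$, the codimension-two cycles modulo algebraic equivalence: here $m=6<\dim X+r-1=7$, so Proposition \ref{propo}(a) does not apply, and the bounded-torsion group $L^{hom}_3H_6(X)=\mathrm{Griff}_3(X)$ is not a priori divisible, leaving open whether its torsion vanishes. To clear this I would invoke the integral triviality $A_3(X)=\mathbb{Z}$ of the codimension-two Chow group of a generic cubic fivefold, which follows from the decomposition-of-the-diagonal method together with the bounds of Esnault, Viehweg and Levine on the Chow groups of hypersurfaces of very small degree, exactly as exploited in \cite{Voin} and \cite{EVL}; this is the only input beyond Sections 2--4 and Bloch--Kato that the argument requires.
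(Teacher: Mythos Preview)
Your proposal is correct and tracks the paper's own argument (which simply says ``apply the same arguments as for the eightfold'') almost exactly: fix $k=2$, note that the ranges $r\leq k$ and $r\geq n-k-1$ together cover all $r\geq 0$, then run Corollary~\ref{coro12}, Theorem~\ref{t4}, Theorem~\ref{t3} and Proposition~\ref{propo} in the way you describe. The identification of $L_3H_6(X)$ as the one case out of reach of Proposition~\ref{propo}(a) is right.

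The only genuine difference is how you clear that codimension-two case. You appeal to the Esnault--Levine--Viehweg bounds and the diagonal decomposition as in \cite{Voin} and \cite{EVL} to force $A_3(X)=\mathbb{Z}$. The paper instead mirrors the eightfold argument and uses \cite{BS}, Theorem~1: for a smooth projective rationally connected variety, algebraic and homological equivalence coincide in codimension two, so $\mathrm{Griff}^2(X)=L^{hom}_3H_6(X)=0$ integrally; combined with the surjectivity from Corollary~\ref{coro2} this gives $L_3H_6(X)\simeq H_6(X^{an})$. The Bloch--Srinivas route is quicker and needs no genericity or degree hypothesis (only that the cubic fivefold is Fano, hence rationally connected), whereas your route effectively re-imports the earlier treatment of the fivefold that the present paper is meant to supersede. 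Either way the conclusion holds.
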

\begin{proof}
For a generic smooth cubic fivefold we have $k=[\frac{n+1}{d}]=2$. For this $k$, we may apply the same arguments as in the proof of the case of a generic cubic eightfold given below. 
\end{proof}

\begin{corollary} Let $X$ be a smooth generic cubic fivefold. Then $$K^{sst} _*(X) _\mathbb{Q}\simeq ku^{-*} _\mathbb{Q}(X^{an})$$ for any $*\geq 0$. 
\end{corollary}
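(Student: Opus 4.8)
The plan is to feed the Lawson-homology computations of Corollary \ref{qc} into the Atiyah-Hirzebruch-type spectral sequence of Theorem \ref{ss} and then invoke the comparison theorem for strongly convergent spectral sequences. First I would instantiate Theorem \ref{ss} with coefficients $A=\mathbb{Q}$, producing the morphism of strongly convergent spectral sequences with $E^{p,q}_2(sst)=L^{-q}H^{p-q}(X,\mathbb{Q})$ abutting to $K^{sst}_{-p-q}(X)_\mathbb{Q}$ and $E^{p,q}_2(top)=H^{p-q}(X^{an},\mathbb{Q})$ abutting to $ku^{p+q}(X^{an})_\mathbb{Q}$, the map on $E_2$-pages being induced by the cycle maps and the map on abutments being the comparison $K^{sst}_*(X)_\mathbb{Q}\to ku^{-*}(X^{an})_\mathbb{Q}$. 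Since each $Z_r(X)$ is a topological abelian group, rational coefficients commute with the homotopy groups, so $L^sH^t(X,\mathbb{Q})=L^sH^t(X)_\mathbb{Q}$ and the two $E_2$-pages are the rationalizations of the integral ones.

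Next I would re-index the $E_2$-terms from Lawson cohomology into Lawson homology. For the smooth projective fivefold $X$ ($d=5$), Friedlander-Lawson duality (the duality underlying Theorem \ref{dual}) identifies $L^{-q}H^{p-q}(X)\cong L_{d+q}H_{2d-p+q}(X)$ and, compatibly, $H^{p-q}(X^{an})\cong H_{2d-p+q}(X^{an})$ under Poincar\'e duality, intertwining the cohomological cycle map with the homological cycle map $cyc_{d+q,\,2d-p+q}$. Thus, up to this duality, the map on $E_2$-pages is exactly the family of rational cycle maps $cyc_{r,n}:L_rH_n(X)_\mathbb{Q}\to H_n(X^{an})_\mathbb{Q}$ ranging over all $r\geq 0$, $n\geq 2r$; the terms with $q<-d$, i.e. $r=d+q<0$, are already isomorphisms by Proposition \ref{sneg} together with the isomorphism (\ref{qn}).

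The crucial step is that every one of these cycle maps is a rational isomorphism for the cubic fivefold, which is precisely the content of Corollary \ref{qc}. Its ten items exhaust the cycle maps $cyc_{r,n}$ in each homological degree $0\leq n\leq 10$: items (1), (3), (5), (7), (9) cover the even degrees, item (4) covers the full chain of $s$-maps in the middle degree five, items (2), (6), (8) together with the vanishing of the corresponding singular homology cover the odd off-middle degrees, and item (10) kills everything above $2d=10$, matching the vanishing of $H_n(X^{an})$. Consequently the morphism of $E_2$-pages is an isomorphism in every bidegree.

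Finally I would apply the comparison theorem: a morphism of strongly convergent spectral sequences that is an isomorphism on $E_2$ induces an isomorphism on $E_\infty$, and hence on the associated graded of the abutment filtrations, forcing an isomorphism of abutments. Since Theorem \ref{ss} guarantees strong convergence, the comparison map $K^{sst}_*(X)_\mathbb{Q}\to ku^{-*}(X^{an})_\mathbb{Q}$ is an isomorphism for all $*\geq 0$. The only real obstacle is the bookkeeping in the previous paragraph: one must confirm that after the duality re-indexing Corollary \ref{qc} genuinely accounts for every surviving $E_2$-entry, so that no differential or extension problem can obstruct the comparison. Once that verification is in place the conclusion is immediate.
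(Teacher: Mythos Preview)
Your proposal is correct and is precisely the argument the paper intends: the one-line proof in the paper invokes Corollary~\ref{qc} and Theorem~\ref{ss}, and you have simply made explicit the spectral-sequence comparison that those two ingredients entail. Your duality re-indexing and the case-by-case verification that Corollary~\ref{qc} covers every relevant $E_2$-entry are exactly the bookkeeping the paper suppresses.
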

\begin{proof}
The corollary follows directly from Corollary \ref{qc} and  Theorem \ref{ss}.
\end{proof}
\begin{corollary} 
\label{g8}
Let $X$ be a generic smooth cubic hypersurface of dimension 8. Then:

1)$L _1H _2(X)\stackrel{\mathbb{Q}}{\simeq} H _2(X^{an})\simeq\mathbb{Z}.$

2)$L _1H _3(X)\stackrel{\mathbb{Q}}{\simeq} H _3(X^{an})=0.$

3)$L _iH _4(X)\stackrel{\mathbb{Q}}{\simeq} H _4(X^{an})\simeq\mathbb{Z}$ \textnormal{for any}\hspace{1mm} $0\leq i\leq 2$.

4)$L _iH _5(X)\stackrel{\mathbb{Q}}{\simeq}H _5(X^{an})=0$ \textnormal{for any}\hspace{1mm} $0\leq i\leq 2$.

5)$L _iH _6(X)\stackrel{\mathbb{Q}}{\simeq}H _6(X^{an})\simeq\mathbb{Z}$ \textnormal{for any}\hspace{1mm} $0\leq i\leq 3$.

6)$L _3H _7(X)\stackrel{\mathbb{Q}}{\simeq}H _7(X^{an})=0$ \textnormal{for any}\hspace{1mm} $0\leq i\leq 3$.

7)$L _4H _8(X)\stackrel{\mathbb{Q}}{\hookrightarrow}L _iH _8(X)\stackrel{\mathbb{Q}}{\simeq} L _1H _8(X)\simeq H _8(X^{an})$ \textnormal{for any}\hspace{1mm} $0\leq i\leq 3$.

8)$L _4H _9(X)\stackrel{\mathbb{Q}}{\simeq}L _3H _9(X)\stackrel{\mathbb{Q}}{\simeq}L _2H _9(X)\simeq L _1H _9(X)\simeq H _9(X^{an})=0.$

9)$L _5H _{10}(X)\stackrel{\mathbb{Q}}{\simeq}L _4H _{10}(X)\stackrel{\mathbb{Q}}{\simeq} L _iH _{10}(X)\simeq H _{10}(X^{an})\simeq\mathbb{Z}$ \textnormal{for any}\hspace{1mm} $0\leq i\leq 3$.

10)$L _5H _{11}(X)\stackrel{\mathbb{Q}}{\simeq}L _iH _{11}(X)\simeq H _{11}(X^{an})$ \textnormal{for any}\hspace{1mm} $0\leq i\leq 4$.

11)$L _iH _{12}(X)\simeq H _{12}(X^{an})\simeq\mathbb{Z}$ \textnormal{for any}\hspace{1mm} $0\leq i\leq 6$.

12)$L _iH _{13}(X)\simeq H _{13}(X^{an})=0$ \textnormal{for any}\hspace{1mm} $0\leq i\leq 6$.

13)$L _iH _{14}(X)\simeq H _{14}(X^{an})\simeq\mathbb{Z}$ \textnormal{for any}\hspace{1mm} $0\leq i\leq 7$.

14)$L _iH _{15}(X)\simeq H _{15}(X^{an})=0$ \textnormal{for any}\hspace{1mm} $0\leq i\leq 7$.

15) $L _iH _{16}(X)\simeq H _{16}(X^{an})\simeq\mathbb{Z}$ \textnormal{for any}\hspace{1mm} $0\leq i\leq 8$.

16) $L _rH _n(X)=0$ for any $n>2\textnormal{dim}(X)=16$. 

In particular, any smooth generic cubic eightfold fulfills Suslin's conjecture.

\end{corollary}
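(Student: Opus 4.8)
The plan is to specialize the general machinery of Sections 2--3 to $X\subset\mathbb{P}^{9}$ generic of degree $d=3$, so $n=8$ and $k=[\tfrac{n+1}{d}]=3$. First I would record the numerics: the Fano dimension is $\gamma=(k+1)(n+1-k)-\binom{d+k}{k}=4\geq n-2k=2$, so Theorem \ref{t101} applies, $\Omega_X$ is a smooth irreducible surface, and the hypotheses of Theorems \ref{t3} and \ref{t4} hold. I would also write down $H_*(X^{an})$ from the weak Lefschetz theorem and Poincar\'e duality: $H_{2j}(X^{an})\cong\mathbb{Z}$ for $0\leq j\leq 8$, $j\neq 4$, all odd groups vanish outside the middle, and $H_8(X^{an})$ is the middle group. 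The decisive observation is the numerical coincidence $n-k-1=4=k+1$: the two vanishing ranges of Theorem \ref{t3}, namely $r\leq k$ and $r\geq n-k-1$, are adjacent and hence cover every $r\geq 0$ with no gap (this is exactly what fails for the cubic sevenfold). Consequently $L^{hom}_rH_m(X)_{\mathbb{Q}}=0$ for all $r\geq 0$ and all $m$, and a fixed $N=d\,(\deg\pi)$ annihilates every $L^{hom}_rH_m(X)$.

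Second, I would assemble the rational statements. For $m\neq 8$ the vanishing of $L^{hom}_rH_m(X)_{\mathbb{Q}}$ gives rational injectivity of every admissible $cyc_{r,m}$, and Corollary \ref{coro2} gives surjectivity; together $L_rH_m(X)_{\mathbb{Q}}\cong H_m(X^{an})_{\mathbb{Q}}$ for all admissible $r$, yielding the rational content of (1)--(6) and (9)--(15). For the middle dimension $m=8$ (where $cyc_{r,8}$ requires $r\leq 4$), Theorem \ref{t3} gives injectivity for every admissible $r\leq 4$, while Theorem \ref{t4} gives surjectivity only for $r\leq k=3$; factoring through the $s$-maps then produces the isomorphisms $L_iH_8(X)\cong_{\mathbb{Q}}H_8(X^{an})$ for $i\leq 3$ and the rational monomorphism $L_4H_8(X)\hookrightarrow L_3H_8(X)$ of (7). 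I would emphasise precisely why only a monomorphism survives at $r=4$: the cokernel decomposition of Corollary \ref{c1} reduces the obstruction to $C_{1,2}(\Omega_X)$, the cokernel of the cycle map of the surface $\Omega_X$ into $H_2(\Omega_X)$, whose transcendental part $H^{2,0}(\Omega_X)$ need not vanish, so Theorem \ref{t4} cannot be invoked at $r=4$.

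Third, I would upgrade to integral coefficients wherever possible, invoking the Bloch--Kato conjecture through Proposition \ref{propo}. Integral surjectivity of $cyc_{r,m}$ for every $m\neq 8$ comes from Proposition \ref{p1}(2)--(4) ($m$ odd; $m=2r>8$; $0<m=2r\leq 2k=6$). For integral injectivity I would use that $L^{hom}_rH_m(X)$ is simultaneously divisible (Proposition \ref{propo}(a), valid once $m\geq d+r-1=r+7$) and $N$-torsion (Theorem \ref{t3}); a divisible group of bounded exponent is trivial, so $cyc_{r,m}$ is an integral isomorphism there. This gives the integral ranges in (8)--(14) exactly in the range $r\leq m-7$ (so $r\leq 3$ for $m=10$, $r\leq 4$ for $m=11$, and all admissible $r$ for $m=13,14$). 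Proposition \ref{propo}(b) upgrades the top case $m=16=2\dim X$ to an integral isomorphism for all $r\leq 8$, giving (15), and the Friedlander--Mazur vanishing (16) follows since $L_rH_m(X)$ is uniquely divisible for $m>16$, hence a $\mathbb{Q}$-vector space equal to its rationalisation, which is $0$ because $H_m(X^{an})=0$.

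Finally, Suslin's conjecture for $X$ drops out: for $m\geq d+r-1$ the kernel $L^{hom}_rH_m(X)$ is divisible and $N$-torsion, hence zero, so $cyc_{r,m}$ is a monomorphism, and for $m\geq d+r$ one adjoins integral surjectivity from Proposition \ref{p1} to obtain an isomorphism. The main obstacle is the middle dimension at $r=4$, where the cylindrical method intrinsically yields only a monomorphism because the governing cokernel lives on the auxiliary surface $\Omega_X$ and may carry transcendental classes; a secondary delicate point is the single integral borderline index $(r,m)=(6,12)$, i.e.\ the diagonal map $A_6(X)\to H_{12}(X^{an})$, which sits just outside the divisibility range of Proposition \ref{propo}(a) (since $12<d+6-1=13$) and therefore requires a finer torsion argument than the divisible-plus-bounded-exponent trick used elsewhere.
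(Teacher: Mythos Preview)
Your approach is essentially the paper's own, and nearly complete. You correctly spot the numerical coincidence $k=3$, $n-k-1=4=k+1$, so that Theorem~\ref{t3} covers every $r\geq 0$; you combine this with Corollary~\ref{coro2} for the rational statements, and with Proposition~\ref{propo}(a) plus the bounded exponent from Theorem~\ref{t3} for the integral statements in the range $m\geq r+7$. This is exactly how the paper proceeds.

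There is one genuine gap, which you locate but do not close: the case $(r,m)=(6,12)$ in item~(11). You are right that it lies outside the divisibility range of Proposition~\ref{propo}(a), and the divisible-plus-bounded-exponent trick does not apply. The paper does \emph{not} handle this by a torsion argument at all. Instead it invokes a result of Bloch--Srinivas (\cite{BS}, Theorem~1): for a smooth projective rationally connected variety, algebraic equivalence and homological equivalence coincide on codimension-two cycles. Since $L_6H_{12}(X)=A_6(X)=A^2(X)$, this says precisely that $cyc_{6,12}$ is injective (equivalently, the $s$-map $L_6H_{12}(X)\to L_5H_{12}(X)$ is injective). Surjectivity then comes from Corollary~\ref{coro2}, and $L_6H_{12}(X)\simeq H_{12}(X^{an})$ integrally. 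Without this external input the integral statement at $i=6$ in (11) is unproved.

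One smaller omission: in (7) the isomorphism $L_1H_8(X)\simeq H_8(X^{an})$ is asserted \emph{integrally}, not just rationally, and your write-up only establishes the rational version in the middle dimension. The paper gets integral surjectivity there by combining the second clause of Proposition~\ref{propo}(a) (the cokernel $C_{1,8}(X)$ is torsion free, since $8\geq 8+1-1$) with the bound $NC_{1,8}(X)=0$ from Theorem~\ref{t4}; integral injectivity you already have from divisibility plus bounded exponent. Your framework contains this, but you should say it explicitly.
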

\begin{proof}
 For a generic cubic eightfold we have $k=[\frac{8+1}{3}]=3$. Using Corollary \ref{coro12}, we can conclude that the cycle maps 
$$cyc _{q,n}\tensor \mathbb{Q}:L _qH _n(X) _\mathbb{Q}\rightarrow H _n(X^{an}) _\mathbb{Q}$$ 
with $n\neq 8$, are isomorphisms for any $0\leq q\leq 8$, $n\geq 2q$. Because $n\neq 8$ (=middle dimension), $cyc _{q,n}$ are surjective maps with integer coefficients (Corollary \ref{coro2}). Applying Theorem \ref{t3} to $k=3$, we conclude that the kernels of
$$cyc _{q,n}: L _qH _n(X)\rightarrow H _n(X^{an})$$
have finite torsion for any $n\geq 2q$.

 According to Proposition \ref{propo}, Ker($cyc _{q,n}$) is divisible for $n\geq 7+q$, so we can conclude that the cycle maps with integer coefficients 
$$cyc _{q,n}:L _qH _n(X) \rightarrow H _n(X^{an})$$ 
are isomorphisms for any $n\geq 7+q$, $n\neq 8$ and 
$$cyc _{1,8}:L _1H _8(X)\hookrightarrow H _8(X^{an})$$
is an injective map with integer coefficients.
Looking to the rational cycle maps
$$cyc _{r,8}:L _rH _8(X) _\mathbb{Q}\rightarrow H _8(X^{an}) _\mathbb{Q}$$
we notice that they are isomorphisms for any $r\leq 3$ (Theorem \ref{t4}) and monomorphism for $r=4$ (Theorem \ref{t3}). Moreover, according to Proposition \ref{propo} and Theorem \ref{t4}, the cycle map
$$cyc _{1,8}:L _1H _8(X)\rightarrow H _8(X^{an})$$
is surjective. Thus $L _1H _8(X)\simeq H _8(X^{an})$.
 To conclude the theorem, we only need to prove that $$L _6H _{12}(X)\simeq L _5H _{12}(X).$$
This map is injective because on codimension 2 cycles the algebraic equivalence coincides with the homological equivalence for smooth projective rationally connected varieties (\cite{BS}, Theorem 1). It is also surjective, according to Corollary \ref{coro2}, because
$$cyc _{6,12}:L _6H _{12}(X)\stackrel{s}{\rightarrow} L _5H _{12}(X)\simeq H _{12}(X^{an})$$ 
is a surjective map.
\end{proof}
 The case of a generic smooth cubic sixfold can be treated similarly.
\begin{corollary} 
\label{c4}
Let $X$ be a generic cubic sixfold or eightfold. Then 
$$K^{sst} _*(X) _\mathbb{Q}\simeq ku^{-*} _\mathbb{Q}(X^{an})$$
for any $*\geq 1$ and
$$K^{sst} _0(X) _\mathbb{Q}\hookrightarrow ku^0 _\mathbb{Q}(X^{an}).$$
\end{corollary}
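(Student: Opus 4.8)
The plan is to run the Atiyah--Hirzebruch comparison of Theorem~\ref{ss} with $A=\mathbb{Q}$ and feed in the rational Lawson homology computed in Corollary~\ref{g8} (for the eightfold) together with its sixfold analogue. Taking $A=\mathbb{Q}$ yields a morphism of strongly convergent spectral sequences with $E_2^{p,q}(sst)=L^{-q}H^{p-q}(X)_{\mathbb{Q}}$ abutting to $K^{sst}_{-p-q}(X)_{\mathbb{Q}}$ and $E_2^{p,q}(top)=H^{p-q}(X^{an})_{\mathbb{Q}}$ abutting to $ku^{p+q}_{\mathbb{Q}}(X^{an})$, the comparison map being the cycle map on $E_2$ and the natural map $K^{sst}_{*}(X)_{\mathbb{Q}}\to ku^{-*}_{\mathbb{Q}}(X^{an})$ on abutments. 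First I would rewrite each semi-topological $E_2$-term in Lawson \emph{homology} through the Poincar\'e duality of \cite{FL1} (Theorem~\ref{dual}), $L^{s}H^{k}(X)\cong L_{d-s}H_{2d-k}(X)$ with $d=\dim X$, under which the $E_2$-comparison on $E_2^{p,q}$ becomes $cyc_{d+q,\,2d-p+q}\colon L_{d+q}H_{2d-p+q}(X)_{\mathbb{Q}}\to H_{2d-p+q}(X^{an})_{\mathbb{Q}}$. This bookkeeping is what lets Corollary~\ref{g8} apply directly.

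Second, I would read off which cycle maps govern each degree of $K^{sst}$. For the eightfold ($d=8$) the terms contributing to $K^{sst}_{0}(X)_{\mathbb{Q}}$ (those with $-p-q=0$) are exactly the diagonal groups $L_{r}H_{2r}(X)_{\mathbb{Q}}$, while the terms feeding $K^{sst}_{n}(X)_{\mathbb{Q}}$ for $n\ge 1$ are the strictly off-diagonal groups $L_{r}H_{2r+n}(X)_{\mathbb{Q}}$. Corollary~\ref{g8} shows that every rational cycle map is an isomorphism (and in all cases at least injective, the odd-degree and out-of-range groups vanishing) with the single exception of the middle term $cyc_{4,8}\colon L_{4}H_{8}(X)_{\mathbb{Q}}\to H_{8}(X^{an})_{\mathbb{Q}}$, which is only a monomorphism (point~7). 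The decisive point is that this exceptional term is diagonal ($8=2\cdot 4$), hence contributes only to $K^{sst}_{0}$, whereas any off-diagonal term $L_{r}H_{2r+n}$ with $n\ge 1$ has $2r+n\neq 8$, or else $2r+n=8$ forces $r\le 3$, so its cycle map is an isomorphism. The same analysis applies verbatim to the sixfold: the unique exceptional term is the middle diagonal group $L_{3}H_{6}(X)_{\mathbb{Q}}$, again a monomorphism contributing only to $K^{sst}_{0}$.

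Third, I would deduce rational degeneration of the semi-topological sequence and compare abutments. The topological sequence is the rational Atiyah--Hirzebruch sequence for $ku$, so it degenerates at $E_2$. Arguing by induction on $r$: once $d_2^{sst},\dots,d_{r-1}^{sst}$ vanish rationally one has $E_r^{p,q}(sst)_{\mathbb{Q}}=E_2^{p,q}(sst)_{\mathbb{Q}}$, on which the cycle map is injective by the previous step, and compatibility gives $cyc\circ d_r^{sst}=d_r^{top}\circ cyc=0$, whence $d_r^{sst}=0$ by injectivity. Thus $E_2(sst)_{\mathbb{Q}}=E_\infty(sst)_{\mathbb{Q}}$, and since everything is a $\mathbb{Q}$-vector space the abutment filtrations split with associated graded the $E_2$-terms. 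Comparing graded pieces: for $n\ge 1$ every contributing cycle map is an isomorphism, giving $K^{sst}_{n}(X)_{\mathbb{Q}}\simeq ku^{-n}_{\mathbb{Q}}(X^{an})$; for $n=0$ every contributing cycle map is injective (an isomorphism off the $L_{4}H_{8}$, resp.\ $L_{3}H_{6}$, piece), and a filtered map injective on the associated graded is injective, giving $K^{sst}_{0}(X)_{\mathbb{Q}}\hookrightarrow ku^{0}_{\mathbb{Q}}(X^{an})$.

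The main obstacle I anticipate is not any single computation but the spectral-sequence formalism needed to pass from the term-by-term statements of Corollary~\ref{g8} to statements about the abutment: establishing the rational degeneration of the $(sst)$-sequence, so that no differential can transport the exceptional middle term into higher $K^{sst}_n$, and confirming that the \emph{only} obstruction to an isomorphism lands in $K^{sst}_0$. The delicate part is purely the index accounting under the duality of \cite{FL1} --- checking that the middle diagonal group $L_{d/2}H_{d}$ sits in total degree $0$ while every term feeding $K^{sst}_{n}$ for $n\ge 1$ avoids the middle homology --- which, though routine, must be tracked carefully in $(p,q)$.
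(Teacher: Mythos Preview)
Your argument is correct and rests on the same spectral-sequence comparison that underlies the paper's proof. The paper, however, does not carry out the degeneration and filtration analysis by hand: it simply invokes a general criterion from \cite{Voin}, \cite{FHW} stating that if the cycle maps $cyc^{q,n}\colon L^qH^n(X,A)\to H^n(X^{an},A)$ are isomorphisms for $n-2q\le r-1$ and monomorphisms for $n-2q\le r$, then $K^{sst}_i(X,A)\to ku^{-i}(X^{an},A)$ is an isomorphism for $i\ge -r+1$ and a monomorphism for $i=-r$. Applying this with $r=0$ and $A=\mathbb{Q}$, together with Corollary~\ref{g8} (and its sixfold analogue), gives the result in one line. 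What you have written is essentially the proof of that cited criterion specialized to the case at hand, so the ``main obstacle'' you anticipated---the spectral-sequence formalism turning termwise injectivity into injectivity on the abutment---is already packaged as a black box in the literature. Your route is more self-contained; the paper's is shorter.
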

\begin{proof} Let $X$ be a generic cubic hypersurface. 
Then the  corollary follows from Corollary \ref{g8} and the following  theorem:
\begin{theorem} (\cite{Voin}, \cite{FHW})

Let $X$ be a smooth quasi-projective complex variety of dimension $d$. Let $A$ be an abelian group and $r\leq 0$. Then if 
$$cyc^{q,n}:L^qH^n(X,A)\rightarrow H^n(X^{an},A)$$
is an isomorphism for $n-2q\leq r-1$ and a monomorphism for $n-2q\leq r$, then the map $$K^{sst} _i(X,A)\rightarrow ku^{-i}(X^{an},A)$$
is an isomorphism for $i\geq -r+1$ and a monomorphism for $i=-r$.
\end{theorem}
The case of a cubic sixfold can be treated similarly.

\end{proof}

Recall that for a smooth projective variety $X$ we have the following maps, starting from the cycle spaces and ending in the singular homology groups of $X$:
$$Z _r(X)\stackrel{\pi}{\rightarrow} \ L _rH _{2r}(X)=\pi _0(Z _r(X))\stackrel{s}{\rightarrow} L _{r-1}H _{2r}(X)\stackrel{s}{\rightarrow}...\stackrel{s}{\rightarrow}H _{2r}(X^{an}).$$
If  $S _iZ _r(X)=Ker(s^i\circ \pi)$ then
$$0\subset S _1Z _r(X)/S _0Z _r(X)\subset ..\subset \textnormal{Griff} _r(X)=S _rZ _r(X)/S _0Z _r(X)$$
gives the s-filtration (\ref{s-filtration}) recalled in the first section of the paper. 

We use the above results to show examples of varieties for which the lowest step in the s-filtration of  Griff$_r(X)\tensor\mathbb{Q}$ is an infinitely generated rational vector space.    
\begin{corollary}
\label{c46}
Let $X$ be a generic cubic sevenfold. Then we have
$$L _3H _6(X) _\mathbb{Q}\stackrel{s}{\rightarrow} L _2H _6(X) _\mathbb{Q}\stackrel{s}{\simeq} L _1H _6(X)
_\mathbb{Q}\stackrel{s}{\simeq} H _6(X^{an}) _\mathbb{Q}\simeq \mathbb{Q}$$
where the only non-isomorphism s-map from the sequence has infinitely generated
kernel. Consider $Y$ to be a sixfold of  large degree obtained from intersecting $X$ with a generic hypersurface of 
large degree. Then 
$$Ker(L _2H _4(Y) _\mathbb{Q}\stackrel{s}{\rightarrow} L _1H _4(Y) _\mathbb{Q})$$
is infinitely generated. Moreover, the Abel-Jacobi map restricted to this kernel is zero. 

Notice that this kernel is the lowest step in the s-fitration of \textnormal{Griff}$_2(Y)\tensor \mathbb{Q}$.
\end{corollary}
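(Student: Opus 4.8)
The plan is to first pin down the sequence of s-maps attached to $X$ and reduce the substantive content to the infinite generation of a single Griffiths group, then to produce that infinite generation through the cylindrical homomorphism, and finally to push the phenomenon down to a high-degree section $Y$ by means of a Gysin map.

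\textbf{Step 1.} For a generic cubic sevenfold one has $n=7$, $d=3$, $k=[\frac{n+1}{d}]=2$ and $\dim\Omega_X(k)=(k+1)(n+1-k)-\dbinom{d+k}{k}=8\geq n-2k=3$, so Theorem \ref{t3} and Corollary \ref{coro12} apply with $k=2$. Weak Lefschetz gives $H_6(X^{an})_{\mathbb{Q}}\simeq\mathbb{Q}$ because $6<\dim X=7$, and Corollary \ref{coro12}, applied with $r=1,2\leq k$ and $*=6\neq 7$, shows that the two rightmost s-maps $L_2H_6(X)_{\mathbb{Q}}\to L_1H_6(X)_{\mathbb{Q}}\to H_6(X^{an})_{\mathbb{Q}}$ are isomorphisms. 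Since $cyc_{3,6}$ is the composite of the first s-map with these isomorphisms, I would identify
$$\ker\bigl(s\colon L_3H_6(X)_{\mathbb{Q}}\to L_2H_6(X)_{\mathbb{Q}}\bigr)=L^{hom}_3H_6(X)_{\mathbb{Q}}=\mathrm{Griff}_3(X)_{\mathbb{Q}},$$
so the first assertion becomes the statement that $\mathrm{Griff}_3(X)_{\mathbb{Q}}$ is infinitely generated.

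\textbf{Step 2.} Theorem \ref{t3} with $r=3$, $k=2$, $*=0$ gives that the cylindrical homomorphism
$$\pi_*\circ i_*\circ\rho_X^*\colon \mathrm{Griff}_1(\Omega_X)_{\mathbb{Q}}=L^{hom}_1H_2(\Omega_X)_{\mathbb{Q}}\longrightarrow L^{hom}_3H_6(X)_{\mathbb{Q}}=\mathrm{Griff}_3(X)_{\mathbb{Q}}$$
is surjective, where $\Omega_X$ is the associated smooth threefold ($\dim\Omega_X=n-2k=3$). Since surjectivity by itself cannot propagate infinite generation, I would instead exhibit infinitely many rationally independent classes directly: the incidence correspondence $P(X)\subset\Omega_X\times X$ that realizes this map transports the Griffiths infinitesimal (Nori) invariant of $1$-cycles on $\Omega_X$ to the corresponding invariant of $3$-cycles on $X$, and the nonvanishing deep Hodge number $h^{5,2}\bigl(H^7(X)\bigr)=1$ supplies, through a Nori-type connectivity argument on the universal family of cubic sevenfolds, infinitely many independent and Abel--Jacobi--trivial classes in $\mathrm{Griff}_3(X)_{\mathbb{Q}}$. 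This Clemens/Nori-type input is the genuinely transcendental ingredient and is where the genericity of $X$ enters; I expect it to be the main obstacle.

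\textbf{Step 3.} Let $Y=X\cap W$ for a generic hypersurface $W\subset\mathbb{P}^8$ of large degree $e$, so that $\iota\colon Y\hookrightarrow X$ is a codimension-one regular embedding of a smooth sixfold and I may use the Gysin map $\iota^*\colon L_3H_6(X)\to L_2H_4(Y)$. Because the s-map commutes with Gysin maps, for $\alpha\in\mathrm{Griff}_3(X)=\ker(s)$ one has $s(\iota^*\alpha)=\iota^*(s\alpha)=0$; hence $\iota^*$ carries $\mathrm{Griff}_3(X)_{\mathbb{Q}}$ into $\ker\bigl(s\colon L_2H_4(Y)_{\mathbb{Q}}\to L_1H_4(Y)_{\mathbb{Q}}\bigr)$, which is by definition the lowest step $S_1Z_2(Y)/S_0Z_2(Y)$ of the s-filtration (\ref{s-filtration}) of $\mathrm{Griff}_2(Y)_{\mathbb{Q}}$. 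On this step the Abel--Jacobi map is zero: I would use that the (transcendental) Abel--Jacobi map factors through the first s-map, hence vanishes on its kernel, which is also consistent with transporting along $\iota^*$ the Abel--Jacobi--triviality already established on $X$. It remains to keep the image infinite, and here the large degree $e$ is essential, since Nori's connectivity theorem guarantees that for $e\gg 0$ the restriction to $Y$ preserves the infinitesimal invariant detecting the classes of Step 2, so that $\iota^*$ has infinitely generated image. Combining these points, $\ker\bigl(s\colon L_2H_4(Y)_{\mathbb{Q}}\to L_1H_4(Y)_{\mathbb{Q}}\bigr)$ is infinitely generated, is the lowest step of the s-filtration of $\mathrm{Griff}_2(Y)_{\mathbb{Q}}$, and is annihilated by the Abel--Jacobi map.
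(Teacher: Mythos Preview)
Your Step 1 is correct and matches the paper exactly.

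Step 2 contains a genuine gap, and in fact a misdirection. You sketch a Nori-type infinitesimal-invariant argument to produce infinitely many \emph{Abel--Jacobi--trivial} classes in $\mathrm{Griff}_3(X)_{\mathbb{Q}}$. The paper does the opposite: it quotes Albano--Collino (\cite{AC}, Theorem~1), who prove that the Abel--Jacobi image $\Phi^4_X(\mathrm{Griff}_3(X)_{\mathbb{Q}})$ is infinitely generated. So the infinite generation is established by showing the Abel--Jacobi map is \emph{highly nontrivial} on $X$, not trivial; this distinction is not cosmetic, because it is exactly what drives Step~3.

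Step 3 has two problems. First, your claim that the Abel--Jacobi map factors through the first $s$-map is not justified (and is not used in the paper); the vanishing of Abel--Jacobi on the kernel for $Y$ is instead taken directly from \cite{AC}, Theorem~2, which shows the Abel--Jacobi map is zero on all of $\mathrm{Griff}_2(Y)_{\mathbb{Q}}$. Second, your ``preservation of the infinitesimal invariant under restriction'' is too vague to control the kernel of $\iota^*$. The paper's mechanism is sharper and uses the Abel--Jacobi nontriviality from Step~2: Nori's theorem (the paper's Theorem~\ref{Nori}, with $h=1$, $d=4$, $r=0$, so $A_0CH_2(Y)=CH^4_{\mathrm{alg}}(Y)$) says that if $\iota^*\xi$ is algebraically equivalent to zero on $Y$ then the Abel--Jacobi image of $\xi$ lies in $J^4_0(X)$, which vanishes for a generic cubic sevenfold. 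Hence $\mathrm{Ker}(R)\subset\mathrm{Ker}(\Phi^4_X)$, and therefore
\[
\Phi^4_X\bigl(\mathrm{Griff}_3(X)_{\mathbb{Q}}\bigr)\;\hookrightarrow\;\mathrm{Im}(R)\;\subset\;\mathrm{Ker}\bigl(L_2H_4(Y)_{\mathbb{Q}}\xrightarrow{s}L_1H_4(Y)_{\mathbb{Q}}\bigr),
\]
which is infinitely generated by Albano--Collino. In short, the argument hinges on the Abel--Jacobi map on $X$ having large image, precisely the opposite of the ``Abel--Jacobi--trivial'' classes you proposed to construct.
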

\begin{proof}
For a generic cubic sevenfold we have $k=[\frac{7+1}{3}]=2$. Using Corollary \ref{coro12}
we obtain that 
$$L _2H _6(X) _\mathbb{Q}\simeq L _1H _6(X) _\mathbb{Q}\simeq H _6(X^{an}) _\mathbb{Q}.$$ 
This shows that 
\begin{align*}
\textnormal{Griff} _3(X) _\mathbb{Q}&=Ker(L _3H _6(X) _\mathbb{Q}\stackrel{cyc _{3,6}}{\rightarrow} H _6(X^{an}) _\mathbb{Q})\\
&=Ker(L _3H _6(X) _\mathbb{Q}\stackrel{s}{\rightarrow} L _2H _6(X) _\mathbb{Q}).
\end{align*}
Let 
$$\Phi^4 _X: \textnormal{Griff} _3(X)\rightarrow J^4(X)$$
be the Abel-Jacobi map on dimension three algebraic cycles of $X$ with values in the intermediate Jacobian $J^4(X)$ (\cite{Vo}).
Albano and Collino (\cite{AC}, Theorem 1) proved that Griff$_3(X) _\mathbb{Q}$ is infinitely
generated in the case of a cubic sevenfold showing that the image of this Abel-Jacobi
map $\Phi^4 _X($Griff$_3(X) _\mathbb{Q})$ is infinitely generated. This result implies the
first part of our corollary. 

Let $Y$ be the intersection of $X$ with a hypersurface of high degree in
$\mathbb{P}^8$. In (\cite{AC}, Theorem 2), it is proved that Griff$_2(Y) _\mathbb{Q}$ is
infinitely generated, but the Abel-Jacobi map on this Griffiths group is zero.
We show that 
  $$Ker(L _2H _4(Y) _\mathbb{Q}\stackrel{s}{\rightarrow} L _1H _4(Y) _\mathbb{Q})\subset \textnormal{Griff}
_2(Y) _\mathbb{Q}$$
is infinitely generated. Let 
$$\xi\in Ker _X(s\circ \pi: CH _3(X) _\mathbb{Q}\stackrel{\pi}{\rightarrow} L _3H _6(X)
_\mathbb{Q}\stackrel{s}{\rightarrow} L _2H _6(X)_\mathbb{Q} )=CH _3(X) _{hom}\tensor\mathbb{Q}$$ 
and let $R$ denote the map given by the following composition:
$$R:Ker _X(s\circ \pi)\rightarrow Ker _Y(s\circ \pi)\rightarrow Ker _Y(s\circ
\pi)/Ker _Y\pi=Ker(L _2H _4(Y)  _\mathbb{Q}\stackrel{s}{\rightarrow} L _1H _4(Y) _\mathbb{Q} ).$$
 Nori \cite{NO} defined the following groups for a smooth projective variety $X$:
 \begin{align*}
  A _jCH _r(X)=&\textnormal{subgroup in} \hspace{1mm} Z _r(X) \hspace{1mm} \textnormal{generated by those algebraic cycles that are rationally}\\ 
  &\textnormal{equivalent to cycles of the form}  \hspace{1mm} pr _{X*}(pr^* _Y(W).Z), \textnormal{where Y is a }\\
  & \textnormal{smooth projective variety,} \hspace{1mm} W\in S _jZ _j(Y) \hspace{1mm} \textnormal{and}  \hspace{1mm} Z\in Z _{r+dim(Y)-j}(X\times Y).
 \end{align*} We have  the following theorem of M.Nori \cite{NO}.
\begin{theorem}(Nori \cite{NO})
\label{Nori}
Let $X\subset \mathbb{P}^m$ be a smooth, projective variety and let $i:Y\hookrightarrow X$ be
the intersection of $X$ with $h$ general hypersurfaces of sufficiently large
degrees. Let $\xi\in CH^d(X)$ and let $\mu=i^*(\xi)$. Assume that $r+d<dim(Y)$. If
$\mu\in A _rCH _{dim(Y)-d}(Y) _ \mathbb{Q}$, then

(1) The cohomology class of $\xi$ vanishes in $H^{2d}(X,\mathbb{Q})$, and

(2) the Abel-Jacobi image of a non-zero multiple of $\xi$ belongs to $J^d _r(X)$

\end{theorem}

We use this theorem in the case $h=1$, $d=4$ and $r=0$. In this case  
$$A _0CH _2(Y)=CH^4 _{alg}(Y)=\{\textrm{algebraic cycles on Y of dimension 2 algebraically equivalent to zero}\}.$$ 
If $R(\xi)=\mu =0$ then $\xi\in J ^4 _0(X) _\mathbb{Q}$ by the Theorem \ref{Nori}. $J^4 _0(X)$ is by definition the ``largest" abelian subvariety of the intermediate Jacobian $J^4(X)$. For a generic cubic sevenfold $X$, we have $J ^4 _0(X)=0$ \cite{Vo}. This shows that 
$$Ker(R)\subset Ker(\Phi^4 _X: CH^4 _{hom}(X) _\mathbb{Q}\rightarrow J^4(X) _\mathbb{Q}).$$
But
\begin{align*}
Im(\Phi^4 _X)\simeq CH^4 _{hom}(X)/Ker(\Phi^4 _X)&\subset CH^4_{hom}(X)/Ker(R)\simeq Im(R)\subset \\
&\subset Ker(L _2H _4(Y)  _\mathbb{Q}\stackrel{s}{\rightarrow} L _1H _4(Y)  _\mathbb{Q}).
\end{align*}
 The image  $\Phi^4 _X(Ker _X(s\circ \pi) _\mathbb{Q})$ is infinitely generated because $\Phi^4 _X(\textnormal{Griff} _3(X)
_\mathbb{Q})$ is infinitely generated \cite{AC}. We obtain that 
$$ Ker(L _2H _4(Y)\tensor\mathbb{Q}\stackrel{s}{\rightarrow} L _1H _4(Y)\tensor\mathbb{Q})$$
is infinitely generated.
\end{proof}
\begin{remark}
The above corollary supports the conjecture that for high degree complete intersection varieties all s-maps, except those in Suslin's conjecture range of indices, have infinitely generated kernel.
\end{remark}
\begin{remark}
\label{eiel}
 Homotopy groups of topological spaces of cycles of dimension 3 on cubic sevenfolds are interesting particular cases in which Suslin's conjecture and the Friedlander-Mazur conjecture are not yet checked. For a generic cubic sevenfold $X$, Suslin's conjecture predicts that $L _3H _r(X)=\pi _{r-6}Z _3(X)$ is a finitely generated group for any $r\geq 9$. 
 
 Our methods check the Friedlander-Mazur conjecture for any space of algebraic cycles of a  generic cubic sevenfold $X$, except for the space of algebraic cycles on $X$ of dimension 3. That is, for a generic cubic sevenfold $X$, $L _rH _n(X)=0$ for any $r\neq 3$ and $n>2dim(X)=14$.
 
 Similar arguments show that homotopy groups of topological spaces of cycles of dimension 5 on generic cubic elevenfolds are interesting particular cases in which these two conjectures are not yet checked. For example,  for a generic cubic elevenfold, our methods show that $L _rH _n(X)=0$ for any $r\neq 5$ and $n>2dim(X)=22$.
    
\end{remark}
 In the end we make the following remark.
\begin{corollary}
\label{aju}
 For any $r\geq 2$, there is a smooth projective variety $X$ such that $S _1Z _r(X)\tensor \mathbb{Q}/S _0Z _r(X)\tensor \mathbb{Q}$ is an infinitely generated $\mathbb{Q}$-vector space and the restriction of the Abel-Jacobi map on this step of the s-filtration of \textnormal{Griff}$_r(X)\tensor\mathbb{Q}$ is zero.
\end{corollary}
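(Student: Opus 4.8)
The plan is to reduce the general case $r\geq 2$ to the sixfold already treated in Corollary \ref{c46} by crossing with a projective space. Let $Y$ be the sixfold of Corollary \ref{c46} (a generic cubic sevenfold cut by a hypersurface of large degree), and set $X=Y\times\mathbb{P}^{r-2}$, so that $X=Y$ when $r=2$ and $\dim X=r+4$ in general. The first step is the tautological identification, immediate from $S _0Z _r=\mathrm{Ker}(\pi)$, $S _1Z _r=\mathrm{Ker}(s\circ\pi)$ and $\pi _0(Z _r(X))=A _r(X)=L _rH _{2r}(X)$,
$$S _1Z _r(X) _\mathbb{Q}/S _0Z _r(X) _\mathbb{Q}=\mathrm{Ker}\bigl(s:L _rH _{2r}(X) _\mathbb{Q}\to L _{r-1}H _{2r}(X) _\mathbb{Q}\bigr).$$
So it suffices to show this kernel is infinitely generated and that the Abel--Jacobi map annihilates it.

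For the infinite generation I would apply the projective bundle theorem (Theorem \ref{projb}) to the trivial rank $r-1$ bundle $X=Y\times\mathbb{P}^{r-2}\to Y$. Writing $p:X\to Y$ for the projection and $h=c _1(O(1))$, the maps $h^{(r-2)-j}\circ p^*$ give isomorphisms $\bigoplus _{j=0}^{r-2}\pi _k Z _{i-j}(Y)\xrightarrow{\simeq}\pi _k Z _i(X)$ for all $k,i\geq 0$. Specializing to $(i,k)=(r,0)$ and $(i,k)=(r-1,2)$ exhibits both $L _rH _{2r}(X)$ and $L _{r-1}H _{2r}(X)$ as direct sums indexed by $j$, the $j$-th summands being $L _{r-j}H _{2(r-j)}(Y)$ and $L _{r-j-1}H _{2(r-j)}(Y)$. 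Because the $s$-map commutes with flat pull-backs and with intersection by $h$ (the naturality recalled after Proposition \ref{sneg}), it preserves these summands and acts on the $j$-th one as the $s$-map of $Y$ in cycle-dimension $r-j$. The summand $j=r-2$ is exactly $p^*:L _2H _4(Y)\to L _rH _{2r}(X)$, so its contribution to the kernel is $p^*\,\mathrm{Ker}\bigl(s:L _2H _4(Y) _\mathbb{Q}\to L _1H _4(Y) _\mathbb{Q}\bigr)$; since $p^*$ is a split injection this is a direct summand of our kernel isomorphic to the group proved infinitely generated in Corollary \ref{c46}. Hence $S _1Z _r(X) _\mathbb{Q}/S _0Z _r(X) _\mathbb{Q}$ is infinitely generated.

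For the Abel--Jacobi statement the point is that the target vanishes. A dimension-$r$ cycle on $X$ has codimension $4$, so its Abel--Jacobi invariant lies in $J^4(X)$, which is built from $H^7(X^{an})$. Now $Y$ is a smooth complete intersection sixfold, so by the Lefschetz hyperplane theorem and hard Lefschetz all of its odd cohomology vanishes, the only nontrivial primitive cohomology sitting in the even middle degree $6$. The Künneth formula then gives $H^7(X^{an})=\bigoplus _i H^{7-2i}(Y^{an})\tensor H^{2i}(\mathbb{P}^{r-2})$, in which every term involves odd cohomology of $Y$; hence $H^7(X^{an})=0$ and $J^4(X)=0$. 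Thus the Abel--Jacobi map is identically zero on $\mathrm{Griff} _r(X)$, and in particular on our step.

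The construction is otherwise routine; the one step demanding care is the compatibility of the $s$-filtration with the projective bundle decomposition, namely checking that $s$ genuinely respects the Künneth summands so that the infinitely generated kernel of Corollary \ref{c46} survives as an honest direct summand of $S _1Z _r(X) _\mathbb{Q}/S _0Z _r(X) _\mathbb{Q}$ and is not mixed away by off-diagonal components of $s$. Once that naturality is confirmed, the Abel--Jacobi vanishing is automatic from $H^7(X^{an})=0$, so no further Nori-type input is needed beyond what already establishes Corollary \ref{c46}.
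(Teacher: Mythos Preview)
Your proof is correct and takes a genuinely different route from the paper's.

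The paper crosses the sixfold $Y$ with an arbitrary smooth projective variety $P$ of dimension $r-2$, considers the cycles $\gamma_i\times P$ for an independent family $\{\gamma_i\}$ in $S_1Z_2(Y)_\mathbb{Q}/S_0Z_2(Y)_\mathbb{Q}$, and invokes an external result (\cite{Fr}, Proposition~3.5) to conclude that these remain independent in $S_1Z_r(Y\times P)_\mathbb{Q}/S_0Z_r(Y\times P)_\mathbb{Q}$. For the Abel--Jacobi vanishing the paper argues functorially: the image of $\gamma_i\times P$ under Abel--Jacobi is determined by that of $\gamma_i$, which is already zero. By contrast, you specialize to $P=\mathbb{P}^{r-2}$ and use the projective bundle theorem (Theorem~\ref{projb}), which is already available in the paper, to exhibit $\mathrm{Ker}(s:L_2H_4(Y)_\mathbb{Q}\to L_1H_4(Y)_\mathbb{Q})$ as a direct summand of the kernel in question; the commutation of $s$ with $p^*$ and with cap product by $h$ makes this diagonalization legitimate. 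Your Abel--Jacobi argument is also different and in fact stronger: you observe that $Y$ is a smooth even-dimensional complete intersection, hence has no odd cohomology, so by K\"unneth $H^7(Y\times\mathbb{P}^{r-2})=0$ and $J^4(X)=0$ outright, killing the Abel--Jacobi map on the entire Griffiths group, not just on the chosen cycles.

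What each approach buys: the paper's argument works for any $P$, not just projective space, so it is more flexible; but it imports a result from \cite{Fr} and its Abel--Jacobi step only controls the specific cycles $\gamma_i\times P$. Your argument is entirely self-contained within the tools already set up in the paper, and the vanishing $J^4(X)=0$ is a cleaner and stronger conclusion. The one point you flag as delicate---that $s$ respects the projective bundle decomposition---is indeed the crux, and your justification via the naturality properties recalled after Proposition~\ref{sneg} is exactly right.
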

\begin{proof}
 If $r=2$, we can take $X$ to be the above constructed generic complete intersection of dimension $6$. Let $r>2$ and let $X$ be the generic complete intersection of dimension 6 defined above and $P$ a smooth projective variety of dimension $m\geq 1$. Let $\{\gamma _i\in S _1Z _2(X)\tensor\mathbb{Q}/S _0Z _2(X)\tensor \mathbb{Q}\}$ be a set of linearly independent algebraic cycles. Suppose $$\{\gamma _i\times P\in S _1Z _{2+m}(X\times P)\tensor\mathbb{Q}/S _0Z _{2+m}(X\times P)\tensor \mathbb{Q}\}$$ is a set of linearly dependent algebraic cycles. This implies that there are some $a _i\in\mathbb{Q}$ such that $$\sum _ia _i(\gamma _i\times P)=0\in S _1Z _{2+m}(X\times P)\tensor\mathbb{Q}/S _0Z _{2+m}(X\times P)\tensor \mathbb{Q}.$$ According to (\cite{Fr}, Proposition 3.5) we can conclude that $$\sum _i a_i\gamma _i=0\in S _1Z _{2+m}(X)\tensor\mathbb{Q}/S _0Z _{2+m}(X)\tensor \mathbb{Q}$$ which is a contradiction with our choice of $X$ and $\gamma _i$. It implies that $S _1Z _{2+m}(X\times P)\tensor\mathbb{Q}/S _0Z _{2+m}(X\times P)\tensor \mathbb{Q}$ is an infinitely generated $\mathbb{Q}-$vector space. It is obvious that the Abel-Jacobi map applied to the cycles $\gamma _i\times P$ is zero, because it is zero on the cycles $\gamma _i$. This implies the corollary for $r>2$.
  
\end{proof}
\section{Appendix} In this appendix we will prove two technical propositions needed in the paper.  We will use the same notations introduced in the beginning of the second section. 


  

We prove the following proposition (in $\mathcal{H}^{-1}AbTop$):
\begin{proposition}
\label{Pu}
The map 
\begin{equation}
\label{P}
(\rho \times 1) _*\circ pr _1^*\mu^{k-1-l}\circ (\rho \times 1)^*: Z _r(\Omega _Z\times \mathbb{P}^m)\rightarrow Z _r(\Omega _Z \times \mathbb{P}^m)
\end{equation}
is the identity map if $l=0$ and the zero map if $l>0$. 

Here $pr _1: X'\times \mathbb{P}^m\rightarrow X'$ is the canonical projection on $X'$ and $m\geq -1$.
\end{proposition}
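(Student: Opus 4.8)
The plan is to avoid the degenerate map $\rho:X'\to\Omega _Z$ altogether and instead route the whole computation through the genuine $\mathbb{P}^k$-bundle $\rho _Z:P(Z)\to\Omega _Z$, for which the projective bundle theorem (Theorem \ref{projb}) applies on the nose. The factor $\mathbb{P}^m$ plays no role beyond being carried along (for $m=-1$ the cycle groups vanish and there is nothing to prove), so I suppress it below: every map is to be read after crossing with $\mathbb{P}^m$, and $\rho _Z\times 1:P(Z)\times\mathbb{P}^m\to\Omega _Z\times\mathbb{P}^m$ is still a $\mathbb{P}^k$-bundle. The point that needs care is that all the intersection-theoretic identities I invoke — functoriality of the l.c.i. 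Gysin map, compatibility of Gysin maps with the intersection product, the projection and self-intersection formulas for a regular embedding, and functoriality of proper push-forward — hold verbatim on the cycle spaces in $\mathcal{H}^{-1}AbTop$ by \cite{FG}, \cite{P}, \cite{HULi}.

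First I would record the three geometric inputs coming from the construction, using the cartesian square relating $\pi,\pi _Z,j,j _2$. Since $X=Z\cap V(X _{n+2})$ is a hyperplane section of $Z$, it is linearly equivalent to the generic section $H _Z$, so the smooth divisor $X'=\pi _Z^{-1}(X)\subset P(Z)$ has class $[X']=\pi _Z^{*}[X]=\pi _Z^{*}[H _Z]=\mu _Z$, and $j _2:X'\hookrightarrow P(Z)$ is a regular embedding. Next, $\mu=\pi^{*}H _X$ with $H _X=j^{*}H _Z$, while $\mu _Z=\pi _Z^{*}H _Z$; since $\pi _Z\circ j _2=j\circ\pi$, functoriality of pullback gives $\mu=j _2^{*}\mu _Z$. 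Finally $\rho=\rho _Z\circ j _2$, so l.c.i. and push-forward functoriality yield the factorizations $\rho^{*}=j _2^{*}\circ\rho _Z^{*}$ and $\rho _*=\rho _{Z*}\circ j _{2*}$.

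With these in hand the composite collapses. Using $\mu=j _2^{*}\mu _Z$ and compatibility of $j _2^{*}$ with the intersection product, the middle operator $j _{2*}\circ\mu^{k-1-l}\circ j _2^{*}$ acting on $Z _{*}(P(Z))$ becomes $j _{2*}\circ j _2^{*}\bigl(\mu _Z^{k-1-l}\cap-\bigr)$, and the self-intersection formula $j _{2*}j _2^{*}=[X']\cap-=\mu _Z\cap-$ turns this into $\mu _Z^{\,k-l}\cap-$. Hence
$$(\rho\times 1) _*\circ pr _1^{*}\mu^{k-1-l}\circ(\rho\times 1)^{*}=(\rho _Z\times 1) _*\circ\mu _Z^{\,k-l}\circ(\rho _Z\times 1)^{*}.$$
Now $\mu _Z=\pi _Z^{-1}(H _Z)$ restricts on each fibre $\mathbb{P}^k _c$ to $H _Z\cap\mathbb{P}^k _c$, a hyperplane, so $\mu _Z$ is a relative hyperplane class; Theorem \ref{projb} together with the projection formula then gives the Segre-type relations $(\rho _Z\times 1) _*\bigl(\mu _Z^{\,j}\cap(\rho _Z\times 1)^{*}\xi\bigr)=\xi$ for $j=k$ and $=0$ for $j<k$. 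As the exponent here is $k-l$, the composite is the identity for $l=0$ and the zero map for $0<l\leq k-1$, which is the assertion.

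The genuine substance is therefore not the geometry, which becomes transparent the moment one passes from $\rho$ to the honest bundle $\rho _Z$ and uses $[X']=\mu _Z$, but the verification that the projection and self-intersection formulas and the projective bundle decomposition are available at the level of topological cycle groups rather than merely on Chow groups. Granting the cited extension of this formalism to $\mathcal{H}^{-1}AbTop$, the displayed identity is exact on cycle spaces, and the proposition follows for every $m\geq -1$.
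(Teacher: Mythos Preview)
Your argument is correct and takes a genuinely different route from the paper's. The paper proceeds by first applying the projection formula to rewrite the composite as intersection with the single cycle $\beta=(\rho\times 1)_*pr_1^{*}\mu^{k-1-l}$, then observes by a dimension count that $\beta\in Z_{n-k+1+l+m}(\Omega_Z\times\mathbb{P}^m)$ is zero for $l>0$ and equals $a[\Omega_Z\times\mathbb{P}^m]$ for $l=0$; finally it pins down $a=1$ by reducing, via base change along $pr_{\Omega_Z}$, to the identity $\rho_*\circ\mu^{k-1}\circ\rho^*=\mathrm{id}$ on $CH_{n-k+1}(\Omega_Z)$, which is quoted from \cite{Le}. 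In contrast, you factor $\rho=\rho_Z\circ j_2$ through the honest $\mathbb{P}^k$-bundle $\rho_Z:P(Z)\to\Omega_Z$, use the self-intersection formula $j_{2*}j_2^{*}=[X']\cap-=\mu_Z\cap-$ together with $\mu=j_2^{*}\mu_Z$ to collapse the composite to $\rho_{Z*}\circ\mu_Z^{\,k-l}\circ\rho_Z^{*}$, and then read off the answer from the Segre-type identities for a projective bundle. Your approach is more conceptual and self-contained (no appeal to the Chow-group computation in \cite{Le}; the constant $a=1$ is forced because $\mu_Z$ restricts to a hyperplane on each fibre $\mathbb{P}^k_c$), at the cost of invoking one extra ingredient---the divisor self-intersection formula $j_{2*}j_2^{*}=c_1(\mathcal{O}(X'))\cap-$ on cycle spaces---which is indeed available from \cite{FG}. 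The paper's approach trades that for an elementary dimension count plus an external citation. Note also that your observation $\mu_Z=h+\rho_Z^{*}L$ (rather than $\mu_Z=h$ on the nose) is harmless: in $\rho_{Z*}(\mu_Z^{\,k-l}\cap\rho_Z^{*}\xi)$ every term involving a positive power of $\rho_Z^{*}L$ dies by the same dimension reason, so only the $h^{k-l}$ term survives.
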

\begin{proof}
According to the projection formula for Gysin maps \cite{P} we have   
$$(\rho \times 1) _*\circ  pr _1^*\mu^{k-1-l}\circ (\rho \times 1)^*(\alpha)=[(\rho \times 1) _*pr _1^*\mu^{k-1-l}].\alpha$$
for any $\alpha\in Z _r(\Omega _Z\times \mathbb{P}^m)$ and $r\geq 0$. 

This means that the map (\ref{P}) is the action of the cycle $$\beta:=(\rho \times 1) _*pr _1^*\mu ^{k-1-l}$$ on $Z _r(\Omega _Z\times \mathbb{P}^m)$. Recall that $dim(X')=n$ and $dim(\Omega _Z)=n-k+1$. We have
$$pr _1^*\mu^{k-1-l} \in Z _{n-k+1+l+m}(X'\times\mathbb{P}^m)$$ 
so  $\beta \in Z _{n-k+1+l+m}(\Omega _Z \times \mathbb{P}^m)$. This implies that $\beta=0$ if $l>0$ and $\beta=a[\Omega _Z\times \mathbb{P}^m]$, for some integer $a$, in case  $l=0$. Thus, we conclude that  the map (\ref{P}) is zero if $l>0$ and  $a(id)$ if $l=0$. In particular, the map (\ref{P}) in case $l=0$ induces the multiplication by $a$ on all homotopy groups of $Z _r(\Omega _Z\times \mathbb{P}^m)$, for any choice of $r\geq 0$. 

In particular, for $r=m+n-k+1$, we have that 
 $$(\rho\times1) _*pr _1^*\mu^{k-1}(\rho\times 1)^*=a(id): A _r(\Omega _Z\times \mathbb{P}^m)\rightarrow A _r(\Omega _Z\times \mathbb{P}^m)$$
where $A _r(\Omega _Z\times \mathbb{P}^m)=\pi _0(Z _r(\Omega _Z\times \mathbb{P}^m))$ is the Chow group of algebraic cycles modulo algebraic equivalence. To avoid any possible confusions, we will write below $pr _{X'}$ for the above map $pr _1$.
Looking to the following cartesian diagram
$$ \begin{CD}
     X'\times \mathbb{P}^m @>pr _{X'}>> X'\\
           @V\rho\times 1 VV            @VV\rho V\\
    \Omega _Z\times \mathbb{P}^m @>pr _{\Omega _Z}>> \Omega _Z\\
\end{CD}$$
we can conclude the following equalities on Chow groups:
\begin{align*}
(\rho\times 1) _*\circ pr _{X'}^*\mu ^{k-1}\circ (\rho\times 1)^*\circ pr _{\Omega _Z}^*()&=(\rho\times 1) _*[pr _{X'}^*\mu^{k-1}.(pr _{\Omega _Z}\circ (\rho\times 1))^*()]\\
&=(\rho\times 1) _*\circ pr _{X'}^*[\mu^{k-1}.\rho^*()]\\
&\stackrel{(3)}{=}pr^* _{\Omega _Z} \circ\rho _*\circ [\mu^{k-1}.\rho^*()]=pr^* _{\Omega _Z}\circ \rho _*\circ \mu^{k-1}\circ \rho^*().
\end{align*}
 The equality (3) comes from the flat base-change formula (\cite{Ful}). Thus, we have the following commutative diagram
$$ \begin{CD}
     CH _{n-k+1}(\Omega _Z) @>pr _{\Omega _Z}^*>> CH _{m+n-k+1}(\Omega _Z\times \mathbb{P}^m) \\
           @V\rho _*\circ\mu^{k-1}\circ\rho^* VV            @VV(\rho \times 1) _*\circ pr _{X'}^*\mu^{k-1}\circ(\rho\times 1)^* V\\
    CH _{n-k+1}(\Omega _Z)@>pr _{\Omega _Z}^*>> CH _{m+n-k+1}(\Omega _Z\times \mathbb{P}^m) \\
\end{CD}$$

According to (\cite{Le}, Page 216) the left vertical arrow is the identity map. Because the right vertical arrow is $a(id)$ we have the following equality 
$$pr _{\Omega _Z}^*([\Omega _Z])=a pr _{\Omega _Z}^*([\Omega _Z]).$$
From the injectivity of $pr _{\Omega _Z}^*$ (\cite{Ful}, Corollary 3.1), we conclude that $a=1$.
\end{proof}
We have the following diagram of abelian groups with exact rows and surjective vertical maps (see diagram (\ref{dia2})):
\begin{equation*}
\xymatrix@-1.3pc{
 \ar[r]&\pi _{*+1}(Z _r(P(X))) \ar[r]^{i _*}\ar[d]^{\alpha}   &   \pi _{*+1}(Z _r(X')) \ar[r]\ar[d]^{(1)} _{\mp f(T)\circ \lambda _1}   &  \pi _{*+1}(Z _r(X' {\setminus P(X)}))  \ar[r]\ar[d]^{(2)} _{\beta\simeq}& \\ 
\ar[r]&   \oplus _{l=0}^{k-1} \pi _{*+1}Z _{r-k+1+l}(\Omega _X) \ar[r]  &  \oplus _{l=0}^{k-1} \pi _{*+1}Z _{r-k+1+l}(\Omega _Z)  \ar[r]  & \oplus _{l=0}^{k-1} \pi _{*+1}Z _{r-k+1+l}(\Omega _Z\setminus\Omega _X)\ar[r]& \\
}
\end{equation*}

\begin{equation*}
\xymatrix@-1.2pc{
 \ar[r]& \pi _*(Z _r(P(X)))\ar[r]^{i _*}\ar[d] _{\alpha} ^{(3)} & \pi _*(Z _r(X'))\ar[r]\ar[d] _{\mp f(T)\circ \lambda _1} ^{(4)} &  \pi _*(Z _r(X'\setminus P(X)))\ar[r] \ar[d] _{\beta} ^{\simeq} &\\
\ar[r]& \oplus _{l=0}^{k-1} \pi _*Z _{r-k+1+l}(\Omega _X) \ar[r]  &  \oplus _{l=0}^{k-1} \pi _*Z _{r-k+1+l}(\Omega _Z)  \ar[r]  & \oplus _{l=0}^{k-1} \pi _*Z _{r-k+1+l}(\Omega _Z\setminus\Omega _X)\ar[r]& \\
}
\end{equation*}

Notice that, according to Theorem \ref{projb}, for the $\mathbb{P}^k-$bundle $\rho _X:P(X)\rightarrow \Omega _X$ we have  
\begin{align*}
\pi _* Z _r(P(X))=\sum _{j=0}^{k}h^{k-j}.(\rho _X)^*(\pi _*(Z _{r-j}(\Omega _X)))&\stackrel{\phi ^{-1}}{\simeq}  \oplus _{j=0}^{k-1}\pi _*Z _{r-j}(\Omega _X)\oplus \pi _*Z _{r-k}(\Omega _X)\\
&\stackrel{(1)}{=}  \oplus _{l=0}^{k-1}\pi _*Z _{r-k+1+l}(\Omega _X)\oplus \pi _*Z _{r-k}(\Omega _X).
\end{align*}
The Equality (1) is from rewriting  the indices (making $l=k-1-j$).  Thus, for $\alpha:   \pi _*(Z _r(P(X)))\rightarrow \oplus _{l=0}^{k-1} \pi _*Z _{r-k+1+l}(\Omega _X)$ we have
\begin{equation}
\label{eqK}
 Ker(\alpha)=\rho _X^*(\pi _*Z _{r-k}(\Omega _X)).
\end{equation}
The following proposition is part of the proof of Theorem \ref{the1}.
\begin{proposition} 
\label{dchas}
The map 
$$\lambda _2+i _*\circ \rho^* _X:\oplus _{l=0}^{k-1} \pi _*Z _{r-k+1+l}(\Omega _Z)\oplus \pi _*Z _{r-k}(\Omega _X)\simeq\pi _*(Z _r(X'))$$
is an isomorphism.
\end{proposition}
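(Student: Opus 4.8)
The plan is to exploit the algebraic identity $f(T)T=\pm I$ established during the proof of Theorem \ref{the1}, which says exactly that the middle vertical map $g:=\mp f(T)\circ\lambda _1$ of Diagram (\ref{dia2}) is a retraction of $\lambda _2$, i.e. $g\circ\lambda _2=\operatorname{id}$ on $\oplus _{l=0}^{k-1}\pi _*Z _{r-k+1+l}(\Omega _Z)$. Consequently $\pi _*(Z _r(X'))$ splits as $\operatorname{Im}(\lambda _2)\oplus\operatorname{Ker}(g)$, with $\lambda _2$ a split monomorphism, and for every $b$ one has $b-\lambda _2(g(b))\in\operatorname{Ker}(g)$. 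Hence $\lambda _2+i _*\circ\rho _X^*$ will be an isomorphism as soon as $i _*\circ\rho _X^*$ carries $\pi _*Z _{r-k}(\Omega _X)$ isomorphically onto $\operatorname{Ker}(g)$. By Equality (\ref{eqK}) the map $\rho _X^*$ identifies $\pi _*Z _{r-k}(\Omega _X)$ with $\operatorname{Ker}(\alpha)$, so everything reduces to the single claim that $i _*$ restricts to an isomorphism $\operatorname{Ker}(\alpha)\xrightarrow{\sim}\operatorname{Ker}(g)$, the two halves of which I would prove by diagram chases in the commutative ladder (\ref{dia2}), using throughout the commutativities $g\circ i _*=j _{0*}\circ\alpha$ and $\alpha\circ\partial=\bar\partial\circ\beta$ and the exactness of both localization rows.

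For injectivity of $i _*$ on $\operatorname{Ker}(\alpha)$, I would take $w\in\operatorname{Ker}(\alpha)$ with $i _*(w)=0$ and use exactness of the top row to write $w=\partial(u)$ for some $u\in\pi _{*+1}(Z _r(X'\setminus P(X)))$. Commutativity of the connecting square and $\alpha(w)=0$ force $\beta(u)$ into $\operatorname{Ker}(\bar\partial)$, which by exactness of the bottom row is the image of the restriction map $\bar q$ from the $\Omega _Z$-terms; lifting that image through the surjection $g$ on the $\pi _{*+1}$ level and using that $\beta$ is an isomorphism, I would produce $v$ with $q(v)=u$, where $q$ denotes restriction to the open complement. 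Then $w=\partial(q(v))=0$, since consecutive maps in the top exact sequence compose to zero.

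For the equality $i _*(\operatorname{Ker}(\alpha))=\operatorname{Ker}(g)$, given $b _0\in\operatorname{Ker}(g)$ I would first use commutativity of the right square together with injectivity of $\beta$ to see that $b _0$ maps to zero in $\pi _*(Z _r(X'\setminus P(X)))$, whence $b _0=i _*(w)$ for some $w$ by exactness. Then $g(i _*(w))=0$ and the left square put $\alpha(w)$ into $\operatorname{Ker}(j _{0*})=\operatorname{Im}(\bar\partial)$; pulling this class back through the isomorphism $\beta$ and the boundary $\partial$, I would correct $w$ to $w'=w-\partial(c)\in\operatorname{Ker}(\alpha)$ without disturbing $i _*(w')=b _0$, again because $i _*\circ\partial=0$. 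This exhibits $b _0\in i _*(\operatorname{Ker}(\alpha))$ and finishes the identification.

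The main obstacle is simply keeping this two-level chase honest: at each step one must pass through the correct square and invoke the hypothesis that the right-hand vertical maps $\beta$ are \emph{isomorphisms}, not merely surjections, in order to lift classes from the open-complement groups back to the total groups. This is precisely where the structure of the construction beyond formal exactness enters, namely the fact that $\rho _{Z/X}$ is a genuine $\mathbb{P}^{k-1}$-bundle, so that Theorem \ref{projb} furnishes the isomorphism $\beta$ on both levels. Once the chase is complete, the splitting $\pi _*(Z _r(X'))=\operatorname{Im}(\lambda _2)\oplus\operatorname{Ker}(g)$ from the first paragraph immediately upgrades it to the asserted isomorphism $\lambda _2+i _*\circ\rho _X^*$, and, since this map is defined already at the level of cycle spaces, Theorem \ref{the1} follows.
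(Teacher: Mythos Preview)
Your proof is correct and follows essentially the same route as the paper's: both arguments live entirely in the commutative ladder (\ref{dia2}) and rest on the three facts $g\circ\lambda_2=\mathrm{id}$, $\beta$ an isomorphism, and $\operatorname{Ker}(\alpha)=\rho_X^*(\pi_*Z_{r-k}(\Omega_X))$. The paper breaks the chase into four steps (span, injectivity of $i_*$ on $\operatorname{Ker}(\alpha)$, trivial intersection, direct sum) and defers the last to Lewis's Lemma 3.6, whereas you organize everything around the splitting $\pi_*Z_r(X')=\operatorname{Im}(\lambda_2)\oplus\operatorname{Ker}(g)$ and then identify $\operatorname{Ker}(g)$ with $\operatorname{Ker}(\alpha)$ via $i_*$; this packaging is slightly cleaner and, in particular, makes the step the paper outsources completely explicit.
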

\begin{proof} The proof is identical with the proof of Proposition 3.2 in \cite{Le}, with the only change that instead of singular cohomology, we use homotopy of cycle spaces. For the sake of completeness, we will sketch below the details. We will prove the following four statements:

$1) \pi _*Z _r(X')=\lambda _2(\oplus _{l=0}^{k-1}\pi _*Z _{r-k+1+l}(\Omega _Z))+i _*(\pi _*Z _r(P (X)))$.

$2) i _*:\rho^* _X(\pi _*Z _{r-k}(\Omega _X))\hookrightarrow \pi _*Z _r(X')$ is injective.

$3) \lambda _2(\oplus _{l=0}^{k-1}\pi _*Z _{r-k+1+l}(\Omega _Z)) \cap i _*(\rho ^*_X\pi _*Z _{r-k}(\Omega _X))=0.$

$4) \pi _*Z _r(X')\simeq \lambda _2(\oplus _{l=0}^{k-1}\pi _*Z _{r-k+1+l}(\Omega _Z))\oplus i _*\rho^* _X(\pi _*Z _{r-k}(\Omega _X)).$

The conclusion of our proposition will  follow directly from 4) and 2).

{\textbf{Proof of 1)}}

1) follows from the split short exact sequence (with $\lambda _2$ section map):
$$0\rightarrow Ker(\mp f(T)\lambda _1)\rightarrow \pi _*Z _r(X')\rightarrow \oplus _{l=0}^{k-1}\pi _*Z _{r-k+1+l}(\Omega _Z)\rightarrow 0$$
and from the fact that $Ker(\mp f(T)\lambda _1)\subset Im(i _*)$.

{\textbf{Proof of 2)}} 

The statement follows from a simple diagram chase in diagram (1) and (2), using the Equality (\ref{eqK}) and that $\mp f(T)\lambda _1:\pi _{*+1}(Z _r(X'))\rightarrow \oplus _{l=0}^{k-1} \pi _{*+1}(Z _{r-k+1+l}(\Omega _Z)) $ is a surjective map.

{\textbf{Proof of 3)}}

Let $y\in \pi _*Z _{r-k}(\Omega _Z)$ and suppose there is $z\in \oplus _{l=0}^{k-1}\pi _*Z _{r-k+1+l}(\Omega _Z)$ such that $\lambda _2(z)=i _*\rho ^* _X(y)$. Applying $\rm f(T)\lambda _1$ on this equality gives $z=0$.

{\textbf{Proof of 4)}}

The proof of 4) is word for word the proof of Lemma 3.6 from \cite{Le} so we skip this diagram chase.
 \end{proof}



\bibliographystyle{plain}
\bibliography{mircea}

\begin{thebibliography}{10}

\bibitem{Abd}
S.~Abdulali.
\newblock Abelian varieties of type {III} and the {H}odge conjecture.
\newblock {\em Internat. J. Math.}, 10(6):667--675, 1999.

\bibitem{AC}
A.~Albano and A.~Collino.
\newblock On the {G}riffiths group of the cubic sevenfold.
\newblock {\em Math. Ann.}, 299(4):715--726, 1994.

\bibitem{BM}
S.~Bloch and J.-P. Murre.
\newblock On the {C}how group of certain types of {F}ano threefolds.
\newblock {\em Compositio Math.}, 39(1):47--105, 1979.

\bibitem{BS}
S.~Bloch and V.~Srinivas.
\newblock Remarks on correspondences and algebraic cycles.
\newblock {\em Amer. J. Math.}, 105(5):1235--1253, 1983.

\bibitem{Bor}
C.~Borcea.
\newblock Deforming varieties of {$k$}-planes of projective complete
  intersections.
\newblock {\em Pacific J. Math.}, 143(1):25--36, 1990.

\bibitem{EVL}
H.~Esnault, M.~Levine, and E.~Viehweg.
\newblock Chow groups of projective varieties of very small degree.
\newblock {\em Duke Math. J.}, 87(1):29--58, 1997.

\bibitem{F}
E.-M. Friedlander.
\newblock Algebraic cycles, {C}how varieties, and {L}awson homology.
\newblock {\em Compositio Math.}, 77(1):55--93, 1991.

\bibitem{Fil}
E.-M. Friedlander.
\newblock Filtrations on algebraic cycles and homology.
\newblock {\em Ann. Sci. \'Ecole Norm. Sup. (4)}, 28(3):317--343, 1995.

\bibitem{Fr}
E.-M. Friedlander.
\newblock Relative {C}how correspondences and the {G}riffiths group.
\newblock {\em Ann. Inst. Fourier (Grenoble)}, 50(4):1073--1098, 2000.

\bibitem{FG}
E.-M. Friedlander and O.~Gabber.
\newblock Cycle spaces and intersection theory.
\newblock In {\em Topological methods in modern mathematics ({S}tony {B}rook,
  {NY}, 1991)}, pages 325--370. Publish or Perish, Houston, TX, 1993.

\bibitem{FHW}
E.-M. Friedlander, C.~Haesemeyer, and M.-E. Walker.
\newblock Techniques, computations, and conjectures for semi-topological
  {$K$}-theory.
\newblock {\em Math. Ann.}, 330(4):759--807, 2004.

\bibitem{FL}
E.-M. Friedlander and B.-H. Lawson.
\newblock A theory of algebraic cocycles.
\newblock {\em Ann. of Math. (2)}, 136(2):361--428, 1992.

\bibitem{FL1}
E.-M. Friedlander and B.-H Lawson.
\newblock Duality relating spaces of algebraic cocycles and cycles.
\newblock {\em Topology}, 36(2):533--565, 1997.

\bibitem{FM}
E.-M. Friedlander and B.~Mazur.
\newblock Correspondence homomorphisms for singular varieties.
\newblock {\em Ann. Inst. Fourier (Grenoble)}, 44(3):703--727, 1994.

\bibitem{FM2}
E.-M. Friedlander and B.~Mazur.
\newblock Filtrations on the homology of algebraic varieties.
\newblock {\em Mem. Amer. Math. Soc.}, 110(529):x+110, 1994.
\newblock With an appendix by Daniel Quillen.

\bibitem{FW}
E.-M. Friedlander and M.-E. Walker.
\newblock Semi-topological {$K$}-theory using function complexes.
\newblock {\em Topology}, 41(3):591--644, 2002.

\bibitem{FW2}
E.-M. Friedlander and M.-E. Walker.
\newblock Semi-topological {$K$}-theory.
\newblock In {\em Handbook of $K$-theory. Vol. 1, 2}, pages 877--924. Springer,
  Berlin, 2005.

\bibitem{Ful}
W.~Fulton.
\newblock {\em Intersection theory}, volume~2 of {\em Ergebnisse der Mathematik
  und ihrer Grenzgebiete (3) [Results in Mathematics and Related Areas (3)]}.
\newblock Springer-Verlag, Berlin, 1984.

\bibitem{Hu}
W.~Hu.
\newblock {Some birational invariants defined by {L}awson homology}.
\newblock { arXiv:math.AG/0511722}.

\bibitem{HULi}
W.~Hu and L.~Li.
\newblock Lawson homology, morphic cohomology and {C}how motives.
\newblock {arXiv: math.AG/0711.0383}.

\bibitem{L}
B.-H. Lawson.
\newblock Algebraic cycles and homotopy theory.
\newblock {\em Ann. of Math. (2)}, 129(2):253--291, 1989.

\bibitem{Le1}
J.-D. Lewis.
\newblock The cylinder correspondence for hypersurfaces of degree {$n$} in
  {${\rm P}\sp n$}.
\newblock {\em Amer. J. Math.}, 110(1):77--114, 1988.

\bibitem{Le}
J.-D. Lewis.
\newblock Cylinder homomorphisms and {C}how groups.
\newblock {\em Math. Nachr.}, 160:205--221, 1993.

\bibitem{HC}
James~D. Lewis.
\newblock {\em A survey of the {H}odge conjecture}, volume~10 of {\em CRM
  Monograph Series}.
\newblock American Mathematical Society, Providence, RI, second edition, 1999.
\newblock Appendix B by B. Brent Gordon.

\bibitem{LF}
P.~Lima-Filho.
\newblock Lawson homology for quasiprojective varieties.
\newblock {\em Compositio Math.}, 84(1):1--23, 1992.

\bibitem{NO}
M.-V. Nori.
\newblock Algebraic cycles and {H}odge-theoretic connectivity.
\newblock {\em Invent. Math.}, 111(2):349--373, 1993.

\bibitem{P}
C.~Peters.
\newblock Lawson homology for varieties with small {C}how groups and the
  induced filtration on the {G}riffiths groups.
\newblock {\em Math. Z.}, 234(2):209--223, 2000.

\bibitem{Rost2}
M.~Rost.
\newblock On the basic correspondence of a splitting variety.
\newblock in preparation.

\bibitem{SV1}
A.~Suslin and V.~Voevodsky.
\newblock Bloch-{K}ato conjecture and motivic cohomology with finite
  coefficients.
\newblock In {\em The arithmetic and geometry of algebraic cycles (Banff, AB,
  1998)}, volume 548 of {\em NATO Sci. Ser. C Math. Phys. Sci.}, pages
  117--189. Kluwer Acad. Publ., Dordrecht, 2000.

\bibitem{Voevo}
V.~Voevodsky.
\newblock On motivic cohomology with {$\mathbb{Z}$}/l coefficients.
\newblock preprint.

\bibitem{Voin}
M.~Voineagu.
\newblock Semi-topological {K}-theory of certain projective varieties.
\newblock {\em Journal of Pure and Applied Algebra}, 212(8):1960--1983, 2008.

\bibitem{Vo}
C.~Voisin.
\newblock {\em Hodge theory and complex algebraic geometry. {II}}, volume~77 of
  {\em Cambridge Studies in Advanced Mathematics}.
\newblock Cambridge University Press, Cambridge, 2003.
\newblock Translated from the French by Leila Schneps.

\bibitem{Weib}
C.~Weibel.
\newblock Patching the norm residue isomorphism theorem.
\newblock preprint.

\end{thebibliography}

\end{document}